\documentclass[11pt]{amsart}

\makeatletter
\def\paragraph{\@startsection{paragraph}{4}%
  \z@\z@{-\fontdimen2\font}%
  {\normalfont\bfseries}}
\makeatother

\usepackage{amsmath}
\usepackage{amssymb}
\usepackage{amsthm}
\usepackage{enumitem}
\usepackage[final]{microtype}
\usepackage[T1]{fontenc}
\usepackage{array}
\usepackage{dcolumn}
\usepackage[vmargin=1in, hmargin=1.3in]{geometry}
\usepackage{float}
\usepackage{graphicx}
\usepackage{tikz-cd}
\usepackage[maxbibnames=99,style=alphabetic]{biblatex}
\usepackage[hidelinks]{hyperref}
\usepackage{xcolor}
\usepackage[normalem]{ulem}
\usepackage{mathtools}
\usepackage{todonotes}
\usepackage{booktabs}
\usepackage{aliascnt}
\usepackage{subcaption}
\captionsetup{compatibility=false}

\usepackage{xcolor}
\usepackage{hyperref}
\hypersetup{
    colorlinks,
    linkcolor={red!50!black},
    citecolor={blue!50!black},
    urlcolor={blue!80!black}
}

\addbibresource{references.bib}

\renewcommand \H{\mathbb{H}}
\renewcommand \v{\vert}
\renewcommand \Re{\mathrm{Re}}
\renewcommand \Im{\mathrm{Im}}
\newcommand \Q{\mathbb{Q}}
\newcommand \R{\mathbb{R}}

\newcommand \Z{\mathbb{Z}}
\newcommand \C{\mathbb{C}}

\newcommand \la{\langle}
\newcommand \ra{\rangle}

\DeclareMathOperator \tr{tr}

\DeclareMathOperator \spn{\text{Span}}
\DeclareMathOperator \Ker{Ker}

\DeclareMathOperator \Aut{Aut}
\DeclareMathOperator \Mod{Mod}

\DeclareMathOperator \GL{GL}

\DeclareMathOperator \SL{SL}
\DeclareMathOperator \PSL{PSL}
\DeclareMathOperator \Sp{Sp}
\DeclareMathOperator \PSp{PSp}

\DeclareMathOperator \PU{PU}

\DeclareMathOperator \Aff{Aff}

\DeclareMathOperator \End{End}

\DeclareMathOperator \Out{Out}
\newcommand{\isom}{\cong}
\DeclareMathOperator{\lcm}{\operatorname{lcm}}

\newcommand \M{\mathcal{M}}
\newcommand \T{\mathcal{T}}
\renewcommand \O{\mathcal{O}}
\newcommand \X{\mathbb{X}}
\newcommand \Xb{\overline{\mathbb{X}}}
\newcommand \Xt{\widetilde{\mathbb{X}}}

\newcommand \F{\mathbb{F}}
\renewcommand \P{\mathbb{P}}
\DeclareMathOperator{\hol}{hol}
\DeclareMathOperator \cc{cc}

\newtheorem{theorem}{Theorem}
\numberwithin{theorem}{section}
\newaliascnt{lemma}{theorem}
\newtheorem{lemma}[lemma]{Lemma}
\aliascntresetthe{lemma}

\newaliascnt{proposition}{theorem}
\newtheorem{proposition}[proposition]{Proposition}
\aliascntresetthe{proposition}

\newaliascnt{corollary}{theorem}

\aliascntresetthe{corollary}

\newaliascnt{conjecture}{theorem}

\aliascntresetthe{conjecture}

\newaliascnt{definition}{theorem}
\newtheorem{definition}[definition]{Definition}
\aliascntresetthe{definition}

\newaliascnt{observation}{theorem}
\newtheorem{observation}[observation]{Observation}
\aliascntresetthe{observation}

\newaliascnt{example}{theorem}

\aliascntresetthe{example}

\newaliascnt{problem}{theorem}
\newtheorem{problem}[problem]{Problem}
\aliascntresetthe{problem}

\newaliascnt{remark}{theorem}
\theoremstyle{remark}
\newtheorem{remark}[remark]{Remark}
\aliascntresetthe{remark}

\newaliascnt{question}{theorem}
\theoremstyle{remark}
\newtheorem{question}[question]{Question}
\aliascntresetthe{question}

\numberwithin{figure}{section}

\numberwithin{equation}{section}

\begin{document}
\title{Veech fibrations}
\author{Sam Freedman}
\author{Trent Lucas}
\address{University of Chicago}
\email{sfreedman67@uchicago.edu}
\address{University of California, Irvine}
\email{trent.lucas@uci.edu}
\date{\today}
\subjclass[2010]{Primary 32G15}
\keywords{Teichm\"uller curves, Veech surfaces, complex surfaces}
\begin{abstract}
    We investigate complex surfaces that fiber over Teichm\"uller curves where the generic fiber is a Veech surface.
    When the fiber has genus one, these surfaces are elliptic fibrations; for higher genus fibers, they are typically minimal surfaces of general type.
    We compute the topological and complex-geometric invariants of these surfaces using the monodromy action on the mod $m$ homology of the fiber.
    We get exact values of the invariants for all known algebraically primitive Teichm\"uller curves.
\end{abstract}
\maketitle

\section{Introduction}

\subsection{Main problem and results}\label{subsec:main-results}
In this paper, we study certain compact complex surfaces that arise from the theory of translation surfaces; we call these complex surfaces \emph{Veech fibrations}.
Veech fibrations are special examples of \emph{Lefschetz fibrations}, which means that each Veech fibration is equipped with a map onto a closed (real) surface that has finitely many nodal singular fibers.
The smooth fibers are certain types of highly symmetric translation surfaces known as \emph{Veech surfaces}.

Recall that a \emph{translation surface} is obtained by identifying pairs of parallel edges of a Euclidean polygon in $\R^2$ using translations.
This is equivalent to the data of a pair $(X,\omega)$, where $X$ is a Riemann surface and $\omega$ is a holomorphic $1$-form on $X$.
(See, for example, the book~\cite{AthreyaMasur} for background on translation surfaces.)
There is a natural action of $\SL(2,\R)$ on the set of all translation surfaces induced by its linear action on $\R^2$.
A \emph{Veech surface} is a translation surface whose $\SL(2,\R)$-orbit determines a \emph{Teichm\"uller curve} $C \to \M_g$, a finite volume hyperbolic (real) surface that is isometrically immersed in the moduli space of genus $g$ Riemann surfaces $\M_g$.

Veech surfaces are rare and remarkable objects; starting with the work of Veech \cite{Veech}, their study has led to a rich interplay between dynamics and the geometry of $\M_g$.
Given the exceptional nature of Veech surfaces, we expect Veech fibrations to be rich examples $4$-manifold geometry and topology.
The purpose of this paper is to investigate the following.

\begin{problem}\label{prob:geography-problem}
  Compute the topological and geometric invariants of a Veech fibration in terms of the geometry of its smooth fibers.
\end{problem}

We are especially interested in \emph{explicit} solutions to \autoref{prob:geography-problem}, and in particular whether any familiar complex surfaces arise as Veech fibrations.

Following M\"oller~\cite{moller_variations_2005}, a Veech surface $(X,\omega)$ determines a collection of Veech fibrations in the following manner:
\begin{enumerate}
    \item Choose a finite manifold cover $\pi:\widetilde{\M}_g \rightarrow \M_g$.
    These correspond to finite index torsion-free subgroups of the mapping class group $\pi_0(\mathrm{Homeo}^+(X))$.
    \item The Teichm\"uller curve $C \rightarrow \M_g$ associated to $(X,\omega)$ lifts to an isometrically immersed finite volume hyperbolic surface $B_\pi \rightarrow \widetilde{\M}_g$.
    \item The cover $\widetilde{\M}_g$ admits a universal family which pulls back to a smooth surface bundle $\X_\pi \rightarrow B_\pi$.
    Concretely, $\X_\pi$ is the ``tautological family'': each point $b \in B_\pi$ corresponds to a Veech surface $(X',\omega')$ in the $\SL(2,\R)$-orbit of $(X,\omega)$, and the fiber of $\X_\pi$ over $b$ is precisely $X'$.
    \item Over a compactification $\overline{B}_\pi$ of $B_\pi$, the family $\X_\pi \rightarrow B_\pi$ extends to a family of \emph{semistable} Riemann surfaces $\Xt_\pi \rightarrow \overline{B}_\pi$ whose total space is a smooth compact complex surface.
    We call $\Xt_\pi$ the \emph{Veech fibration} associated to the cover $\pi$.
\end{enumerate}

Using results of Chen and M\"oller \cite{ChenMoller,Moller_PCMI}, we will derive formulas for the invariants of $\Xt_\pi$ in terms of $(X,\omega)$ and the chosen cover $\pi:\widetilde{\M}_g \rightarrow \M_g$.
However, computing the invariants \emph{explicitly} is difficult since it depends on an explicit description of the cover of the Teichm\"uller curve $B_\pi \rightarrow C$.

To make the computation tractable, we restrict our focus to the natural family of \emph{congruence covers} $\widetilde{\M}_g[m] \rightarrow \M_g$ for $m \geq 3$ corresponding to the \emph{principal congruence subgroups} of the mapping class group.
The cover $\widetilde{\M}_g[m]$ has an associated \emph{congruence Veech fibration} $\Xt_m \rightarrow \overline{B}_m$.
Computing the cover of the Teichm\"uller curve $B_m \rightarrow C$ amounts to computing the image of the monodromy representation
\begin{equation*}
  \rho_m:\Aff^+(X,\omega) \rightarrow \Aut(H_1(X; \Z/m\Z)),
\end{equation*}
where $\Aff^+(X,\omega)$ is the group of affine automorphisms of $X$ (that is, the group of orientation-preserving homeomorphisms preserving the flat metric given by $\omega$).
This problem is still difficult, as it is related to the poorly understood problem of computing \emph{Kontsevich--Zorich monodromy}, that is, the action of $\Aff^+(X,\omega)$ on $H_1(X;\Z)$.

In this paper, we compute the image of $\rho_m$ for infinitely many $m$ for several families of Veech surfaces, which yields many explicit solutions to \autoref{prob:geography-problem}.
For example, we solve \autoref{prob:geography-problem} for certain ``regular polygon surfaces'' first studied by Veech \cite{Veech}: 

\begin{theorem}\label{thm:intro-double-odd-gons}
  Let $(X,\omega)$ be the regular $q$-gon surface of genus $g = (q - 1)/2$ where $q \ge 5$ is prime.
  Fix a prime $p \geq 3$ for which the minimal polynomial of $4\cos\left(\pi/q\right)^2$ is irreducible over $\F_p$, and let $\Xt_{q,p} \rightarrow \overline{B}_{q,p}$ be the $p$-congruence Veech fibration.
  Then: 
  \begin{itemize}
    \item $\Xt_{q,p}$ is a minimal general type surface,
    \item the map $\pi_1(\Xt_{q, p}) \to \pi_1(\overline{B}_{q, p})$ is an isomorphism,
    \item $\Xt_{q,p}$ has Euler characteristic and signature
    \[
      e(\Xt_{q, p}) = dg + d(q-3)\left(\frac12 - \frac1q - \frac1p\right) \quad \text{and} \quad  \sigma(\Xt_{q, p}) = -\frac{d(q^2 - 1)}{4q},
    \]
    where $d = |\PSL(2, 5)|$ if $(p, g) = (3, 2)$ or $|\PSL(2, \F_{p^g})|$ otherwise.
  \end{itemize}
\end{theorem}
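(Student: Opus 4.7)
The plan has four parts: compute the degree $d$ of the cover $\overline{B}_{q,p} \to C_q$; substitute into the invariant formulas derived earlier in the paper; verify the $\pi_1$ isomorphism; and deduce minimality and general type.

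\textbf{Step 1: Degree of the cover.} The Veech group of $(Y_q, \omega_q)$ is the $(2, q, \infty)$-triangle (Hecke) group in $\PSL(2, \R)$, and the symplectic representation on $H_1(Y_q; \Z)$ has trace field $K = \Q(4\cos^2(\pi/q))$, a totally real number field of degree $g$ over $\Q$. Under $\rho_p$, the image of the Veech group lies in (a conjugate of) $\PSL(2, \O_K/p\O_K)$. The hypothesis that $4\cos^2(\pi/q)$ has irreducible minimal polynomial over $\F_p$ means $p$ is inert in $\O_K$, so $\O_K/p\O_K \isom \F_{p^g}$ and the image lies in $\PSL(2, \F_{p^g})$. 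The heart of the step is to show this image is all of $\PSL(2, \F_{p^g})$: by Dickson's classification of subgroups of $\PSL(2, \F_{p^g})$, any proper subgroup containing elements of orders $2$ and $q \ge 5$ (coprime to $p$) must be either dihedral, one of $A_4$, $S_4$, $A_5$, or a subfield subgroup; these are all excluded by the presence of order-$q$ torsion together with order considerations, with one genuine coincidence at $(p, g) = (3, 2)$, where $\PSL(2, 9) \isom A_6$ admits $\PSL(2, 5) \isom A_5$ as the actual image. This yields the stated $d$.

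\textbf{Step 2: Invariants.} With $d$ in hand, the general formulas for $e$ and $\sigma$ of a congruence Veech fibration (established earlier in the paper from \cite{ChenMoller}) specialize via the standard inputs for the double $q$-gon: the orbifold Euler characteristic of $C_q$, the number and types of cylinders at each cusp direction, and the moduli of these cylinders. The signature formula, driven by the Kodaira--Spencer contribution at each cusp of $\overline{B}_{q,p}$, simplifies to $-d(q^2 - 1)/(4q)$; the Euler characteristic decomposes into a smooth piece $dg$ plus cusp corrections that consolidate to $d(q-3)(\tfrac12 - \tfrac1q - \tfrac1p)$.

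\textbf{Step 3: $\pi_1$, minimality, and general type.} For the fundamental group, the singular fibers of $\Xt_{q,p} \to \overline{B}_{q,p}$ are nodal degenerations along nonseparating core curves of cylinders on $(Y_q, \omega_q)$, and the corresponding vanishing cycles normally generate $\pi_1$ of the fiber (a standard fact for mapping class groups of closed surfaces); van Kampen then gives $\pi_1(\Xt_{q,p}) \isom \pi_1(\overline{B}_{q,p})$. For general type, Noether's identity $K^2 = 3\sigma + 2e$ combined with the computed invariants yields $K^2 > 0$ for every $q \ge 5$ and every admissible $p$, and since the fiber genus $g \ge 2$ the surface is neither ruled nor elliptic, so the Kodaira dimension equals $2$. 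Minimality holds because the singular fibers are stable nodal curves containing no $(-1)$-components.

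The main obstacle is Step 1: the irreducibility hypothesis pins the ambient group down to $\PSL(2, \F_{p^g})$, but identifying the image of $\rho_p$ among its subgroups requires a careful Dickson case analysis, with particular attention to the genuine exceptional coincidence at $(p, g) = (3, 2)$, which is precisely what forces the $|\PSL(2, 5)|$ case in the statement.
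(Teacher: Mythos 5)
Your outline reproduces the overall shape of the argument, but three of its load-bearing steps have genuine gaps. First, in Step 1 you assert that the mod-$p$ image of the Veech group ``lies in (a conjugate of) $\PSL(2,\O_K/p\O_K)$.'' Over $\Q$ this follows from real multiplication, but mod $p$ it is exactly the hard point: one needs $\hol(H_1(Y_q;\Z))$ to be a \emph{free} rank-$2$ module over the order $\Z[\alpha]$, $\alpha=4\cos^2(\pi/q)$, with a horizontal/vertical basis, before the integral action embeds in $\SL(2,\Z[\alpha])$ and can be reduced mod $p$. This is the content of \autoref{thm:action_on_homology_mod_p}, and verifying its hypothesis for the double $q$-gon requires the staircase computation (\autoref{lem:staircase-heights}, \autoref{cor:staircases-satisfy-criterion}); you treat it as known. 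Second, your surjectivity argument via Dickson's \emph{subgroup classification} is incomplete as stated: the list of proper subgroups of $\PSL(2,\F_{p^g})$ also contains the point-stabilizer (Borel-type) subgroups $E_{p^m}\rtimes C$, which are not excluded by element orders alone --- when $\ord_q(p)=g$ they can contain elements of orders $2$, $q$ and $p$ simultaneously --- and excluding subfield subgroups needs the irreducibility hypothesis through $\ord_q(p)$, not just ``order considerations.'' The paper avoids this case analysis entirely: it exhibits two opposite unipotents $\left(\begin{smallmatrix}1&\overline{\alpha}\\0&1\end{smallmatrix}\right)$, $\left(\begin{smallmatrix}1&0\\1&1\end{smallmatrix}\right)$ in the image and invokes Dickson's two-generator theorem, which both gives surjectivity for $p^g\neq 9$ and identifies the exceptional image as $\SL(2,5)$ at $(p,g)=(3,2)$; your proposal asserts the $A_5\subset A_6$ coincidence but gives no argument that the image is exactly $A_5$.

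Third, in Step 3 the claim that the vanishing cycles ``normally generate $\pi_1$ of the fiber (a standard fact)'' is false in general: the vanishing cycles are only the core curves of cylinder decompositions in the $\Aff^+$-orbit, and the Eierlegende Wollmilchsau (\autoref{subsec:wollmilchsau}) is an explicit example where they do not normally generate. The correct argument is the geometric criterion of \autoref{prop:pi_1_suff_cond} applied to the staircase polygon. Relatedly, your minimality argument conflates relative minimality (no $(-1)$-spheres \emph{in fibers}) with minimality of the total space; the paper passes from one to the other via Stipsicz's theorem, which, together with the Iitaka-plus-$c_1^2>0$ argument for general type (\autoref{thm:kodaira-dimension-genus-high}), requires the base genus to be at least $1$. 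That positivity must be checked (it fails precisely at $(q,p)=(5,3)$, i.e.\ $(p,g)=(3,2)$, a case your theorem statement includes), and there minimality and general type need the separate multisection/Riemann--Hurwitz argument of \autoref{rem:double_pentagon}; your proposal does not address the genus-$0$ base case at all.
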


See \autoref{subsec:about-the-proofs} for motivation for our hypotheses on the prime $p$.
A particularly interesting consequence of \autoref{thm:intro-double-odd-gons} is that the Veech fibration $\Xt_{5, 3}$ is a well-known Horikawa surface; see \autoref{rem:double_pentagon}.

Our methods for understanding the representation $\rho_m$ for the case of the double polygon surface extend to all \emph{algebraically primitive} Veech surfaces.
These are Veech surfaces for which the action of an affine automorphism $\phi \in \Aff^+(X,\omega)$ on $H_1(X;\R)$ is ``tautological'' in a precise sense.
There are three known families of algebraically primitive Veech surfaces:
\begin{enumerate}[label=(\roman*)]
  \item the genus $2$ \emph{Weierstrass eigenforms} in the \emph{minimal stratum} with nonsquare discriminant, which were classified by McMullen~\cite{McMullen_Billiards_Tcurves_Hilbert_modular_surfaces}.
  \item the regular $n$-gon surfaces studied by Veech \cite{Veech} where $n$ is prime, twice a prime, or a power of $2$,
  \item the ``sporadic'' Veech surfaces $E_6$ and $E_7$ discovered by Leininger~\cite{Leininger_2004}.
\end{enumerate}
We solve \autoref{prob:geography-problem} for all these families.

\begin{theorem}\label{thm:all-alg-prim-results}
  Let $(X, \omega)$ be a Veech surface in one of the families (i)-(iii) described above, and for $m \geq 3$, let $\Xt_m \rightarrow \overline{B}_m$ be the $m$-congruence Veech fibration.
  Then for a prime $p \ge 3$ in a computable set of primes depending on $(X, \omega)$:
  \begin{itemize}
    \item $\Xt_p$ is a minimal surface of general type,
    \item the map $\pi_1(\Xt_p) \rightarrow \pi_1(\overline{B}_p)$ is an isomorphism,
    \item formulas for the Euler characteristic and signature of $\Xt_p$ are given in Table \ref{tab:l-tables} for family (i), Tables \ref{tab:BM_q_and_2q}-\ref{tab:BM_2tok} for family (ii), and Table \ref{tab:sporadics} for family (iii).
  \end{itemize}
\end{theorem}

\subsection{About the proofs}\label{subsec:about-the-proofs}
For any Veech surface $(X,\omega)$, there is a \emph{holonomy homomorphism} $\hol:H_1(X,\Z) \rightarrow \R^2$ given by integrating the $1$-form $\omega$.
In the algebraically primitive case, this map is an embedding, which makes it tractable to compute the action $\rho:\Aff^+(X,\omega) \rightarrow \Aut(H_1(X;\Z))$.
In principle, one could use ``strong approximation'' (see, for example, Matthews--Vaserstein--Weisfeiler \cite{Matthews_Vaserstein_Weisfeiler}) to compute
the image of the map $\rho_p:\Aff^+(X,\omega) \rightarrow \Aut(H_1(X;\F_p))$ for all but finitely many primes, but it is not clear how to explicitly determine the exceptional primes.
Our main technical result is an \emph{effective} criterion on a prime $p$ that allows us to determine the image of $\rho_p$.

The key is that for an integral linear combination $\alpha = \sum c_i \tr(D\phi_i)$ where $c_i \in \Z$, $\phi_i \in \Aff^+(X,\omega)$, and $D\phi_i \in \SL(2,\R)$ is the derivative of $\phi_i$, we can construct a homomorphism $\Z[\alpha] \rightarrow \End(H_1(X;\Z))$ that corresponds to the usual multiplication by $\alpha$ under the holonomy homomorphism\footnote{The existence of the map $\Z[\alpha] \rightarrow \End(H_1(X;\Z))$ is a shadow of a deeper theorem of McMullen~\cite{McMullen_Billiards_Tcurves_Hilbert_modular_surfaces} and M\"oller~\cite{moller_variations_2005}, which says that the Jacobian of an algebraically primitive Veech surface admits real multiplication by its trace field.}.
This gives $H_1(X;\Z)$ the structure of a $\Z[\alpha]$-module.
Using this structure, we prove the following main technical result.

\begin{theorem}\label{thm:main_technical}
  Let $(X,\omega)$ be an algebraically primitive Veech surface of genus $g$.
  Suppose that there exist affine automorphisms $\phi_H,\phi_V \in \Aff^+(X,\omega)$ whose derivatives are given by
  \begin{equation*}
  D\phi_H = \begin{pmatrix}
  1 & c \\ 0 & 1
  \end{pmatrix},\ 
  D\phi_V = \begin{pmatrix}
  1 & 0 \\ d& 1
  \end{pmatrix}.
  \end{equation*}
  Let $\alpha \coloneqq cd = \tr(D\phi_HD\phi_V) - \tr(I)$.
  
  Suppose that the image of $\hol:H_1(X,\Z) \rightarrow \R^2$ is a free $\Z[\alpha]$-module of rank $2$ with a $\Z[\alpha]$-basis of the form $\{\left( \begin{smallmatrix} x \\ 0 \end{smallmatrix} \right), \left( \begin{smallmatrix} 0 \\ y \end{smallmatrix} \right)\}$.
  Let $p \geq 3$ be a prime such that the minimal polynomial $m_\alpha(x)$ of $\alpha$ is irreducible over $\F_p$ and $(p,g) \neq (3,2)$.  Then,
  \begin{equation*}
  \Im\left(\Aff^+(X,\omega) \xrightarrow{\rho_p} \Aut(H_1(X;\F_p))\right) \cong \SL(2,\F_{p^g}).
  \end{equation*}
\end{theorem}

To apply \autoref{thm:main_technical} to the families of \autoref{thm:all-alg-prim-results}, we use the fact that all these surfaces arise from the \textit{Thurston--Veech construction} (see \autoref{subsec:TV_construction}), providing a concrete way of understanding the image of the holonomy homomorphism.

\subsection{Connection to elliptic fibrations}\label{subsec:intro--elliptic-fibrations}
If $X$ has genus 1, the associated Veech fibrations are examples of \emph{elliptic fibrations}.
In fact, the Veech fibrations $\Xt \rightarrow \overline{B}$ obtained from the construction in \autoref{subsec:main-results} are known as \emph{elliptic modular surfaces}, first studied by Kodaira~\cite{Kodaira} and later by Shioda~\cite{Shioda}.

More concretely, any genus $1$ Veech surface $(X,\omega)$ is $\SL(2,\R)$-equivalent to the square torus, and the associated Teichm\"uller curve $C$ is in fact the full moduli space $\M_1$.
A choice of cover $\widetilde{\M}_g \rightarrow \M_g$ is equivalent to a choice of a finite index subgroup $\Gamma \subseteq \SL(2,\Z)$, and the congruence cover $\widetilde{\M}_g[m]$ corresponds to the principal congruence subgroup $\Gamma(m) \subseteq \SL(2,\Z)$.
In this setting, the natural map $\rho:\Aff^+(X,\omega) \rightarrow \Aut(H_1(X;\Z))$ is an isomorphism onto $\SL(2,\Z)$, and hence the image of the map $\rho_m:\Aff^+(X,\omega) \rightarrow \Aut(H_1(X;\Z/m\Z))$ is isomorphic to $\SL(2, \Z/m\Z)$.

For any choice of cover $\widetilde{\M}_1 \rightarrow \M_1$ corresponding to a finite index torsion-free subgroup $\Gamma \subseteq \SL(2,\Z)$, Shioda \cite{Shioda} computes the singular fibers and geometric genus of the associated Veech fibration $\Xt \rightarrow \overline{\M}_1$ in terms of the group $\Gamma$.
In the case where $\widetilde{\M}_1$ is the congruence cover $\widetilde{\M}_1[m]$,
Shioda also explicitly computes the group of sections of the congruence Veech fibration $\Xt_m \rightarrow \overline{B}_m$.
See \cite{Sebbar_Besrour_2021} for a survey of some elliptic surfaces that arise from low-index subgroups of $\SL(2,\Z)$.

\subsection{Simply connected Veech fibrations}
A natural problem is to determine which simply connected 4-manifolds arise as Veech fibrations.
A necessary condition for $\Xt$ to be simply connected is that the base $\overline{B}$ has genus 0 (see \autoref{prop:pi_1_total_space}).
For the Veech fibration $\Xt_m \rightarrow \overline{B}_m$ associated to the congruence cover $\widetilde{\M}_g[m] \rightarrow \M_g$, such examples are difficult to come by.
The cover of the Teichm\"uller curve $B_m \rightarrow C$ must be branched over every orbifold point of $C$, and the degree tends to be large. However, the only high-degree regular branched covers $S^2 \rightarrow S^2$ are cyclic or dihedral, and such covers have constrained branch points.

In the case where $X$ has genus 1, the cover $B_m$ has genus 0 for $m \in \{3,4,5\}$.
In these cases, the results of \autoref{sec:top_invariants} tell us that $\Xt_3$, $\Xt_4$, and $\Xt_5$ are the smooth 4-manifolds $E(1)$, $E(2)$, and $E(5)$ (see, for example, \cite[Ch.~3.1]{gompf_stipsicz} for the definition of these 4-manifolds).
In particular, $\Xt_3$ is a Beauville rational surface and $\Xt_4$ is an elliptic K3 surface \cite{Sebbar_Besrour_2021}.

As mentioned above, an interesting simply connected example is the Veech fibration $\Xt_{5,3} \rightarrow \overline{B}_{5,3}$ of \autoref{thm:intro-double-odd-gons} associated to the double pentagon Veech surface.
In this case, $\Xt_{5,3}$ is a Horikawa surface with Euler characteristic 116 and signature $-72$.
Up to deformation equivalence, there are two such surfaces; they are known to be homeomorphic, and it is a famous open problem to determine if they are diffeomorphic (see, for example, \cite{Auroux}).
See \autoref{rem:double_pentagon} for details.

Computer experiments show that there are more simply connected examples.   
Determining which Teichm\"uller curves yield simply connected Veech fibrations appears to be a subtle problem.
We know that the deck group of the cover $B_m \to C$ is cyclic, dihedral, $A_4$, $S_4$, or $A_5$, and that the cover is branched over the orbifold points of $C$. 
However, the cover need not branch over every cusp of $C$, making it hard to describe the resulting constraints on $C$.

\subsection{Previous work}
Our construction of Veech fibrations was originally due to M\"oller \cite{moller_variations_2005}.
Since then, Veech fibrations have been used to prove a variety of deep results about Teichm\"uller curves, see, for example, \cite{Bouw-Moller, ChenMoller, moller_periodic_2006, bonnafoux2022arithmeticity}.
Although Veech fibrations have been useful tools in the study of Teichm\"uller curves, our work is a first step toward studying them as interesting spaces in their own right.

In their thesis, Ronkin \cite{ronkin} also studied families over Teichm\"uller curves using this construction.
They evaluated the Miller-Morita-Mumford class $\kappa_1$ on Teichm\"uller curves, obtaining formulas for the area of the Teichm\"uller curve with respect to the Weil--Peterson and Teichm\"uller metrics.

As mentioned above, Veech fibrations are examples of Lefschetz fibrations, a well-studied class of 4-manifolds (see, for example, Gompf--Stipsicz \cite[Ch~8]{gompf_stipsicz}).
Smith \cite{Smith} gives a formula for the signature of Lefschetz fibrations that agrees with \autoref{prop:signature_formula}; see \autoref{rmk:smith-formula} for more details.

For another approach to studying representations $\rho_m$, see Finster \cite{Finster2016}.

In recent years there has been an increase in interest in 4-manifolds and their mapping class groups from a hands-on geometric viewpoint; see, e.g., Farb--Looijenga \cite{Farb_Looijenga_2022} and Lee \cite{Seraphina_Lee_2023}.
Veech fibrations might serve as interesting examples for such investigations in future work.
\subsection{Contents}
In \autoref{sec:background}, we recall some background on Veech surfaces, Teichm\"uller curves and the action of $\Aff^+(X,\omega)$ on $H_1(X)$.
In \autoref{sec:veech-fibrations}, we define Veech fibrations following M\"oller, and describe their structure as families of curves and more generally as Lefschetz fibrations.
We also describe the \emph{congruence cover} of Teichm\"uller curves and the resulting \emph{congruence Veech fibrations}.
In \autoref{sec:top_invariants}, we describe the topological invariants of Veech fibrations: their Euler characteristic, signature, fundamental group, Betti numbers, and intersection form.
In \autoref{sec:geometric_invariants}, we discuss Veech fibrations as complex surfaces.  We show that most Veech fibrations are minimal general type surfaces, and we provide a sufficient condition to show that their BMY inequality is strict.

In \autoref{sec:alg_prim_Tcurves}, we address the case that $(X,\omega)$ is algebraically primitive.
We apply the results of \autoref{sec:top_invariants} and \autoref{sec:geometric_invariants} to deduce some general results on Veech fibrations in the algebraically primitive case.
Then, we prove \autoref{thm:main_technical}, which gives us a criterion to compute the degree of the congruence cover.  In \autoref{subsec:l_tables}, \autoref{subsec:b-m}, and \autoref{subsec:sporadic}, we apply \autoref{thm:main_technical} to the genus 2 Weierstrass eigenforms, regular polygon surfaces, and sporadic examples.

\subsection{Acknowledgments}
We thank Benson Farb for his mini-course on mapping class groups of 4-manifolds that initially inspired this project, and for offering many key ideas at its outset.
We thank Dawei Chen for his comments and for suggesting the minimality argument in \autoref{rem:double_pentagon}.
We thank our advisors Jeremy Kahn and Bena Tshishiku for reading earlier drafts and for many helpful discussions.
Finally, we thank Dan Abramovich, Dori Bejleri, Aaron Calderon, Pat Hooper, Carlos Matheus, Curt McMullen, Martin M\"oller, and Karl Winsor for their comments and for answering our questions.

\section{Background}\label{sec:background}

In \autoref{subsec:Veech_surfaces} and \autoref{subsec:teichmuller_curves} we recall background on Veech surfaces and Teichm\"uller curves, mainly following McMullen's survey \cite{McMullen_survey_2022}.
We then describe how the affine automorphism group acts on the homology of a Veech surface in \autoref{subsec:action-on-homology}.

\subsection{Veech surfaces}\label{subsec:Veech_surfaces}
\subsubsection{Veech surfaces}
A \emph{translation surface} is a pair $(X,\omega)$ where $X$ is a Riemann surface and $\omega$ is a holomorphic 1-form on $X$.
We let $\Aff^+(X,\omega)$ denote the group of orientation preserving affine automorphisms, i.e.\ orientation preserving homeomorphisms of $X$ which are given by affine maps in the natural coordinates of $\omega$.
There is a well-defined derivative map $D:\Aff^+(X,\omega) \rightarrow \SL(2,\R)$.
The image of this map, denoted $\SL(X,\omega)$, is called the \emph{Veech group} of $X$.
We say $(X,\omega)$ is a \emph{Veech surface} if $\SL(X,\omega)$ is a lattice in $\SL(2,\R)$.
For any $\phi \in \Aff^+(X,\omega)$ with derivative $A = D\phi \in \SL(X,\omega)$, we have that
\begin{equation}\label{eqn:tautological_action}
    \phi^*(\omega) = 
    \begin{pmatrix}
        1 & i
    \end{pmatrix}
    A
    \begin{pmatrix}
        \Re(\omega) \\ \Im(\omega)
    \end{pmatrix}.
\end{equation}
Hence we get a short exact sequence
\begin{equation*}
    1 \rightarrow \Aut(X,\omega) \rightarrow \Aff^+(X,\omega) \rightarrow \SL(X,\omega) \rightarrow 1
\end{equation*}
where $\Aut(X,\omega)$ is the (finite) group of complex automorphisms of $X$ that preserve $\omega$.

Throughout this work, we let $K$ denote the \emph{trace field} of $\SL(X,\omega)$, i.e.\ the subfield of $\R$ generated by the traces of elements of $\SL(X,\omega)$.
For a Veech surface, the trace field is a totally real number field of degree at most $g$.  Moreover, the trace of any $A \in \SL(X,\omega)$ is an algebraic integer.

\subsubsection{Cylinder decompositions and parabolic elements}
A \emph{cylinder decomposition} of $(X,\omega)$ is a decomposition of $X$ into parallel cylinders whose edges are comprised of saddle connections (that is, Euclidean straight line geodescis joining two zeros of $\omega$ with no zeros in their interior).
The \emph{modulus} of a cylinder is the ratio of its height over circumference.
On a Veech surface $(X,\omega)$, the moduli of the cylinders in any cylinder decomposition will have rational ratios.

It follows that if $(X,\omega)$ is a Veech surface, then for any cylinder decomposition of $(X,\omega)$, there is an associated parabolic subgroup of $\SL(X,\omega)$ that contains a multitwist around the core curves of the cylinders.
More concretely, if the cylinders have moduli $m_1, \ldots, m_k$ and $m$ is a real number such that $m_i/m \in \Z$ for all $i$, then the associated parabolic subgroup of $\SL(X,\omega)$ contains $D\phi$, where $\phi \in \Aff^+(X,\omega)$ is an $m_i/m$-fold Dehn twist around the $i$th cylinder's core curve.
If we choose $m$ such that the integers $m_i/m$ are coprime, we call $\phi$ the \emph{minimal multitwist} about the cylinder decomposition.
If the cylinders are horizontal, then $D\phi$ is the matrix
\begin{equation*}
    \begin{pmatrix}
        1 & \frac{1}{m} \\ 0 & 1
    \end{pmatrix}.
\end{equation*}
Conversely, if $A \in \SL(X,\omega)$ is a parabolic element fixing a line in $\R^2$ of slope $s$, then $(X,\omega)$ has a cylinder decomposition where each cylinder has slope $s$.

We say that a Veech surface $(X, \omega)$ is a \textit{Thurston--Veech surface} if it is horizontally and vertically periodic.
Up to an element of $\SL(2, \R)$, any Veech surface is a Thurston--Veech surface.

\subsection{Teichm\"uller curves}\label{subsec:teichmuller_curves}
Fix a genus $g$ Veech surface $(X,\omega)$.
The group $\SL(2,\R)$ naturally acts on the Teichm\"uller space of holomorphic 1-forms $\Omega\T_g$, and the orbit of $(X,\omega)$ descends to a holomorphic embedding $\H \hookrightarrow \T_g$ of the upper half plane into Teichm\"uller space.
The group $\Aff^+(X,\omega)$ naturally embeds in the mapping class group $\Mod(X) \coloneqq \pi_0(\textrm{Diff}^+(X))$, and the image of this embedding is the $\Mod(X)$-stabilizer of the set $\H$.
The subgroup $\Aut(X,\omega)$ maps into the pointwise stabilizer of $\H$.
It follows that the geodesic $\H \hookrightarrow \T_g$ descends to a map $C := \H/\SL(X,\omega) \rightarrow \M_g$.
Since $\SL(X,\omega)$ is a lattice, the quotient $C$ is a finite volume hyperbolic surface, called the \emph{Teichm\"uller curve} generated by $(X,\omega)$.
The map $C \rightarrow \M_g$ is proper and generically injective, and the normalization of the image is $C$.

The Veech group is never cocompact; the cusps of $C$ correspond to conjugacy classes of maximal parabolic subgroups of $\SL(X,\omega)$, and hence also to $\Aff^+(X,\omega)$-orbits of cylinder decompositions.
Up to conjugation, the subgroup associated to a cusp can have one of three forms:
\begin{equation*}
    P = \left\la
    \begin{pmatrix}
        1 & a \\ 0 & 1
    \end{pmatrix}
    \right\ra,\ 
    P = \left\la
    \pm\begin{pmatrix}
        1 & a \\ 0 & 1
    \end{pmatrix}
    \right\ra,\ 
    \text{or }
    P = \left\la
    -\begin{pmatrix}
        1 & a \\ 0 & 1
    \end{pmatrix}
    \right\ra. 
\end{equation*}
In the first two cases, we say that the cusp is \emph{regular}, and in the third case, we say that it's \emph{irregular}.
We note that the cusp generator is not necessarily the derivative of the multitwist constructed above.
For instance, the Teichm\"uller curve of the Eierlegende Wollmilschau (see, e.g., \cite{forni_matheus_survey}) has a single cusp, and the cusp generator is the derivative of a $\frac{1}{4}$-twist around two horizontal cylinders.

Note that the Veech group $\SL(X,\omega)$ may contain torsion (such as the matrix $-I$), so $C$ is in general an orbifold.  
It is at times convenient to work with the alternative orbifold structure $\widehat{C} \coloneqq \H/\PSL(X,\omega)$, where $\PSL(X,\omega)$ is the image of $\SL(X,\omega)$ in $\PSL(2,\R)$.
That is, $\widehat{C}$ is the hyperbolic orbifold associated to $\SL(X,\omega)$.

\subsection{Action on homology of Veech surfaces}\label{subsec:action-on-homology}
Consider the natural action on homology
\begin{equation*}
    \rho:\Aff^+(X,\omega) \rightarrow \Sp(H_1(X;\Z)).
\end{equation*}
The representation $\rho$ is injective and generally not surjective, both of which can be seen from a natural splitting of $H_1(X;\R)$ that is preserved by $\Aff^+(X,\omega)$.
This splitting is obtained as follows.
First, we have the \emph{tautological subspace} $H_1(X)^{st} \subseteq H_1(X;\R)$, defined as the dual of the 2-dimensional subspace
\begin{equation*}
    \spn_\R\{\Re(\omega), \Im(\omega)\} \subseteq H^1(X;\R).
\end{equation*}
This subspace is in fact defined over the trace field $K$ of $\SL(X,\omega)$ (see, e.g., \cite{Moller_PCMI}).
If we let $\sigma_1, \ldots, \sigma_d$ denote the $d$ embeddings $K \hookrightarrow \R$, we get a splitting
\begin{equation}\label{eqn:splitting}
    H_1(X;\R) = \bigoplus_{i=1}^d \left(H_1^{st}(X)^{\sigma_i}\right) \oplus H_1^{(0)}(X),
\end{equation}
where $H_1^{st}(X)^{\sigma_i}$ denotes the Galois conjugate of $H_1^{st}(X)$, and $H_1^{(0)}(X)$ is the symplectic complement of the subspaces $H_1^{st}(X)^{\sigma_i}$.
In fact, M\"oller \cite{moller_variations_2005} proved that this gives a splitting of the variation of Hodge structures over $C$.
The subspace $H_1^{(0)}(X)$ can be taken to be the kernel of the \emph{holonomy homomorphism}
\begin{align*}
    \hol:H_1(X;\Z) &\rightarrow \R^2 \cong \C, \quad \gamma \mapsto \int_\gamma \omega.
\end{align*}
We study the holonomy map in more detail in \autoref{sec:alg_prim_Tcurves}.
Note that the splitting (\ref{eqn:splitting}) is generally not defined over $\Z$, and therefore does not necessarily descend to a splitting of $H_1(X;\Z/m\Z)$.

The action of $\Aff^+(X,\omega)$ on $H_1^{st}(X)$ is ``tautological'' in the sense that, with respect to the basis $\{\Re(\omega), \Im(\omega)\}$, any $\phi \in \Aff^+(X,\omega)$ acts by the matrix $D\phi$.
On the other hand, the action of $\Aff^+(X,\omega)$ on $H_1^{(0)}(X)$, also called the \emph{Kontsevich--Zorich monodromy}, is generally difficult to understand.
Many authors have studied the Kontsevich--Zorich monodromy; see e.g.\ \cite{forni_matheus_survey} for a survey or \cite{bonnafoux2022arithmeticity} for a recent article.

There are two extreme cases of the splitting (\ref{eqn:splitting}):
\begin{enumerate}[label=(\roman*)]
    \item the case that $(X,\omega)$ is \emph{algebraically primitive}, meaning that $K$ has degree $g$ and
    \begin{equation*}
        H_1(X;\R) = \bigoplus_{i=1}^g H_1^{st}(X)^{\sigma_i},
    \end{equation*}
    \item the case that $(X,\omega)$ is square-tiled, in which case $K = \Q$ and
    \begin{equation*}
        H_1(X;\R) = H_1^{st}(X) \oplus H_1^{(0)}(X).
    \end{equation*}
\end{enumerate}
In case (ii), the splitting is in fact defined over $\Q$, and hence descends to $H_1(X;\Z/m\Z)$ for all but finitely many $m$.

\section{Veech fibrations}\label{sec:veech-fibrations}

We now describe how to construct families of Veech surfaces over Teichm\"uller curves.
In \autoref{subsec:Defining_Veech_fibrations} we address a general construction originally due to M\"oller, and describe the resulting family as a Lefschetz fibration.
In \autoref{subsec:congruence_Veech_fibrations} we specialize to those fibrations corresponding to the \textit{congruence subgroups} of $\Aff^+(X, \omega)$, i.e., the affine automorphisms that act trivially on $H_1(X; \Z/m\Z)$.

\subsection{Defining Veech fibrations}\label{subsec:Defining_Veech_fibrations}

\subsubsection{The general construction}\label{subsec:the_general_construction}
Fix a genus $g$ Veech surface $(X,\omega)$, and let $C \rightarrow \M_g$ be the associated Teichm\"uller curve.  
One might hope to build a family of curves $\X \rightarrow C$ by restricting the ``universal family'' $\M_{g,1} \rightarrow \M_g$, but this construction in general in a stack since $X$ may have automorphisms.
To obtain an honest manifold, we must pass to a finite cover of $\M_g$ corresponding to a torsion free finite index subgroup of $\Mod(X)$.

More concretely, let $\Gamma' \subseteq \Mod(X)$ be a torsion free finite index subgroup, and let $\Gamma = \Aff^+(X,\omega) \cap \Gamma'$.  Then $\widetilde{\M}_g \coloneqq \T_g/\Gamma'$ is a finite cover of $\M_g$, and $B \coloneqq \H/\Gamma$ is a finite cover of $C$ (as a cover of orbifolds, it is branched only at the cusps of $C$).
Moreover, the map $C \rightarrow \M_g$ lifts to a map $B \rightarrow \widetilde{\M}_g$.
Since $\Gamma'$ is torsion free, $\widetilde{\M}_g$ has a universal family $\X_{\mathrm{univ}} \rightarrow \widetilde{\M}_g$, which pulls back to a family $\X \rightarrow B$.

We now want to complete the family $\X \rightarrow B$ to a family over the closure $\overline{B}$.
If we assume that the monodromy around each cusp of $B$ is a unipotent element of $\GL(H_1(X;\Z))$, then the family $\X \rightarrow B$ extends to a family of stable curves $\Xb \rightarrow \overline{B}$.
(See the work of Cautis \cite[Thm~1.1]{Cautis} who summarizes the work of Deligne, Mumford, and Grothendieck in this direction).
The resulting total space $\Xb$ need not be smooth: the fibers over the cusps of $B$ may have nodal singularities of type $A_{n-1}$, i.e.\ singularities locally modeled by $xy = t^n$.  
To obtain a smooth total space, we can use blow ups to replace each singularity with a chain of $n-1$ $\P^1$'s with self-intersection $-2$, resulting in $n$ nodes modeled on $xy = t$; see the book of Harris and Morrison \cite{HarrisMorrison} for more information.
The resulting family, which we denote $\Xt \rightarrow \overline{B}$, now has \textit{semistable fibers} over the cusps of $B$.
The whole construction is depicted in the following diagram:
\begin{equation*}
    \begin{tikzcd}
	\Xt & \Xb & \X & {\X_{\mathrm{univ}}} \\
	& {\overline{B}} & B & {\widetilde{\M}_g} \\
	&& C & {\M_g}
	\arrow[from=3-3, to=3-4]
	\arrow[from=2-3, to=2-4]
	\arrow[from=1-4, to=2-4]
	\arrow[from=1-3, to=2-3]
	\arrow[from=2-3, to=3-3]
	\arrow[from=2-4, to=3-4]
	\arrow[from=1-2, to=2-2]
	\arrow[from=1-1, to=2-2]
	\arrow[from=1-1, to=1-2]
	\arrow[from=1-3, to=1-4]
	\arrow[hook', from=2-3, to=2-2]
	\arrow[hook', from=1-3, to=1-2]
\end{tikzcd}
\end{equation*}

We summarize the construction with a definition:
\begin{definition}\label{def:Veech_fibration}
    Let $(X,\omega)$ be a Veech surface with Teichm\"uller curve $C \rightarrow \M_g$.
    Let $B \to C$ be a finite cover such that
    \begin{enumerate}[label=(\roman*)]
        \item $\pi_1(B) = \Aff^+(X,\omega) \cap \Gamma'$ for some torsion free finite index subgroup $\Gamma' \subseteq \Mod(X')$,
        \item the monodromy about each cusp of $B$ is a unipotent element of $\GL(H_1(X;\Z))$.
    \end{enumerate}
    We call the resulting semistable family of curves $\Xt \rightarrow \overline{B}$ a \emph{Veech fibration} with fiber $(X,\omega)$.
\end{definition}

If one instead has a cover $B \rightarrow C$ satisfying only condition (i) of \autoref{def:Veech_fibration}, then the monodromy around the cusps will be \textit{quasi-unipotent}.  
This follows from standard theory (see Schmid \cite[Thm~6.1]{Schmid}), but it can also be seen directly.
As discussed in Section 2, each cusp generator of $C$ has a power which is a multitwist around a set of disjoint curves on $X$, and this multitwist acts on $H_1(X;\Z)$ by a unipotent element \cite[Prop~6.3]{farb_margalit}.
So if $B \rightarrow C$ satisfies condition (i), one can always pull back $\X \rightarrow B$ along a further cover $B' \rightarrow B$ to obtain a family $\X' \rightarrow B'$ with a semistable completion.

\subsubsection{Relation with Lefschetz fibrations}

We note that a Veech fibration $\Xt \rightarrow \overline{B}_m$ is in fact a Lefschetz fibration with connected fibers.  
It is \emph{relatively minimal}, meaning that no fiber contains a $-1$-sphere.  
However, the map $\Xt \rightarrow \overline{B}$ is not injective on the set of critical points, meaning that each singular fiber will generally contain multiple vanishing cycles.
(If we view $\Xt$ as a smooth 4-manifold rather than a complex surface, we could perturb the map to be injective on critical points.)

In the stable family $\Xb$ the vanishing cycles in the singular fiber over a cusp are the core curves of the associated cylinder decomposition.
The monodromy around a cusp in $\Xb$ is a multitwist comprised of powers of Dehn twists about the core curves.
When we pass from the stable family $\Xb$ to the semistable family $\Xt$, we replace each vanishing cycle with a set of parallel vanishing cycles, making the monodromy a single Dehn twist around each curve.

\subsection{Congruence Veech fibrations}\label{subsec:congruence_Veech_fibrations}

\subsubsection{Defining congruence Veech fibrations}
As mentioned in the Introduction, we will focus on a particular collection of torsion free finite-index subgroups of $\Aff^+(X, \omega)$ in construction Veech fibrations: the \textit{level $m$ congruence Veech subgroups}.

Let $(X,\omega)$ be a Veech surface with Teichm\"uller curve $C \rightarrow \M_g$.
Recall that the level $m$ congruence subgroup
\begin{equation*}
    \Mod(X)[m] \coloneqq \Ker(\Mod(X) \rightarrow \Sp(H_1(X;\Z/m\Z)).
\end{equation*}
is torsion free for $m \geq 3$ \cite[Thm~6.9]{farb_margalit}.
We write 
\begin{equation*}
    \Aff^+(X,\omega)[m] \coloneqq \Aff^+(X,\omega) \cap \Mod(X)[m]
\end{equation*}
and $\SL(X,\omega)[m]$ for the image of $\Aff^+(X,\omega)[m]$ in $\SL(X,\omega)$.
Note that in fact
\begin{equation*}
    \Aff^+(X,\omega)[m] \cong \SL(X,\omega)[m],
\end{equation*}
since $\Aut(X,\omega) \cap \Aff^+(X,\omega)[m]$ is trivial (the former is finite while the latter is torsion free).
We define $B_m \coloneqq \H/\SL(X,\omega)[m]$, and we call $B_m \rightarrow C$ the \emph{level $m$ congruence cover} of $C$.

While the cusps of $B_m$ needn't have unipotent monodromy (see after \autoref{def:Veech_fibration}), when they do we define: 
\begin{definition}
    Let $(X,\omega)$ be a Veech surface with Teichm\"uller curve $C \rightarrow \M_g$, and let $B_m \rightarrow C$ be the level $m \geq 3$ congruence cover.
    \textit{If} the cusps of $B_m$ have unipotent monodromy, we obtain a Veech fibration $\Xt_m \rightarrow \overline{B}_m$ with fiber $(X,\omega)$ that we call the \emph{level $m$ congruence Veech fibration} for $(X,\omega)$.
\end{definition}

\subsubsection{Topology of the base}\label{subsec:topology-of-base}
If we let $\rho_m:\Mod(X) \rightarrow \Sp(H_1(X;\Z/m\Z))$ denote the action on homology, then the short exact sequence
\begin{equation*}
    1 \rightarrow \Mod(X)[m] \rightarrow \Mod(X) \xrightarrow{\rho_m} \Sp(H_1(X;\Z/m\Z)) \rightarrow 1,
\end{equation*}
restricts to a sequence
\begin{equation*}
    1 \rightarrow \Aff^+(X,\omega)[m] \rightarrow \Aff^+(X,\omega) \rightarrow \rho_m(\Aff^+(X,\omega)) \rightarrow 1.
\end{equation*}
Let $\PSL(X,\omega)[m]$ denote the image of $\SL(X,\omega)[m]$ in $\PSL(X,\omega)$.
Note that the projection $\SL(X,\omega)[m] \rightarrow \PSL(X,\omega)[m]$ is injective, since $\SL(X,\omega)[m]$ is torsion free and so $-I \notin \SL(X, \omega)[m]$.

There is a well-defined map $\SL(X, \omega) \to \rho_m(\Aff^+(X, \omega))/\rho_m(\Aut(X, \omega))$ and a resulting commutative diagram of exact sequences:
\begin{equation*}
    \begin{tikzcd}
	{\Aff^+(X,\omega)[m]} & {\Aff^+(X,\omega)} & {\rho_m(\Aff^+(X,\omega))} \\
	{\SL(X,\omega)[m]} & {\SL(X,\omega)} & {\rho_m(\Aff^+(X,\omega))/\rho_m(\Aut(X,\omega))} \\
	{\PSL(X,\omega)[m]} & {\PSL(X,\omega)} & {\rho_m(\Aff^+(X,\omega))/\la \rho_m(\Aut(X,\omega)), J\ra}
	\arrow[hook, from=1-1, to=1-2]
	\arrow[hook, from=2-1, to=2-2]
	\arrow[hook, from=3-1, to=3-2]
	\arrow[two heads, from=1-2, to=1-3]
	\arrow[two heads, from=2-2, to=2-3]
	\arrow[two heads, from=3-2, to=3-3]
	\arrow["\cong", from=1-1, to=2-1]
	\arrow["\cong", from=2-1, to=3-1]
	\arrow[from=1-2, to=2-2]
	\arrow[from=2-2, to=3-2]
	\arrow[from=1-3, to=2-3]
	\arrow[from=2-3, to=3-3]
\end{tikzcd}
\end{equation*}
where $J = -I$ if $-I \in \SL(X,\omega)$ and $J=I$ if $-I \not\in \SL(X,\omega)$.
The bottom row corresponds to the Galois cover of hyperbolic orbifolds $B_m \rightarrow \widehat{C} = \H/\PSL(X,\omega)$ with deck group 
\begin{equation*}
    G \coloneqq \rho_m(\Aff^+(X,\omega))/\la \rho_m(\Aut(X,\omega)), J \ra.
\end{equation*}

It follows that to compute the degree of $B_m \rightarrow \widehat{C}$, we must compute the order of the group $G$. 
We carry this out for algebraically primitive $(X,\omega)$ and certain prime $m$ in \autoref{sec:alg_prim_Tcurves}.
Note that in that in these cases, $\Aut(X,\omega)$ is trivial, and the map $\rho_m$ factors through $\SL(X,\omega)$.
We then compute the genus of $B_m$ from the Riemann-Hurwitz formula.
As for the cusps of $B_m$, let $C_1, \ldots, C_n \in \PSL(X,\omega)$ be the cusp generators of $\widehat{C}$.
Then if $C_i$ has order $k_i$ in $G$, there are $\v G \v/k_i$ cusps on $B_m$ lying over $C_i$, each of which is generated by a conjugate of $C_i^{k_i}$.

\section{Topological invariants}\label{sec:top_invariants}
In this section, we discuss the basic topological invariants of Veech fibrations.
Throughout this section, we fix a genus $g$ Veech surface $(X,\omega)$ with Teichm\"uller curve $C$, and let $\Xt \rightarrow B$ be a Veech fibration with fiber $(X,\omega)$.

In \autoref{subsec:eul_char_signature}, we present formulas for the Euler characteristic $e(\Xt)$, the signature $\sigma(\Xt)$, and the Chern class $c_1^2(\Xt)$.
In \autoref{subsec:pi_1}, we discuss $\pi_1(\Xt)$.  
We will see in particular that the map $\pi_1(\Xt) \rightarrow \pi_1(\overline{B})$ is surjective; we provide a sufficient condition for when the map is an isomorphism, and an example where it is not.
In \autoref{subsec:betti_numbers}, we discuss $H_1(\Xt)$.
Again, we have that the map $H_1(\Xt) \rightarrow H_1(\overline{B})$ is surjective; we provide a sufficient condition for when the map is an isomorphism, and an example where it is not.
Finally, in \autoref{subsec:intersection_form}, we discuss the intersection form on $H_2(\Xt)$.

\subsection{Euler characteristic and signature}\label{subsec:eul_char_signature}
\subsubsection{Total twisting}
Both the Euler characteristic and signature formulas involve a quantity we call the \emph{total twisting} of $\Xt \rightarrow B$, defined as follows.
Let $\Delta$ denote the set of cusps of $B$.
Each cusp $c \in \Delta$ has an associated cylinder decomposition with core curves $\gamma_1^c, \ldots, \gamma_{\ell_c}^c$, and the monodromy around $c$ is a $k_i^c$-fold Dehn twist around the curve $\gamma_i^c$ for some $k_1^c, \ldots, k^c_{\ell_c} \in \Z$.
We define the total twisting to be the quantity
\begin{equation*}
    T \coloneqq \sum_{c \in \Delta}\sum_{i=1}^{\ell_c} k_i^c.
\end{equation*}

The significance of the total twisting $T$ is that it is also the total number of vanishing cycles across all the singular fibers of $\Xt$.
Indeed, for each $c \in \Delta$, the semistable curve over $c$ is obtained by contracting $k_i^c$ parallel copies of the curve $\gamma^c_i$ for $1 \leq i \leq \ell_c$.

\subsubsection{Euler characteristic}
We now recount a well-known formula for the Euler characteristic $e(\Xt)$.
\begin{proposition}\label{prop:euler_char}
The Euler characteristic of $\Xt$ is
\[
    e(\Xt) = \chi(\overline{B})\chi(X) + T = 4(g-1)(b-1) + T.
\]
\end{proposition}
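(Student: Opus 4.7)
My plan is to decompose $\Xt$ into its smooth and singular loci and exploit the additivity of the Euler characteristic. Writing $\Delta \subset \overline{B}$ for the set of cusps, the restriction $\Xt|_B \to B = \overline{B} \setminus \Delta$ is a genuine fiber bundle with fiber $X$, while $\Xt|_\Delta = \bigsqcup_{c \in \Delta} \Xt_c$ is a disjoint union of the singular fibers. The main ingredients will be (a) the multiplicativity of $e$ for fiber bundles, and (b) a standard formula for the Euler characteristic of a nodal curve in terms of its arithmetic genus and its number of nodes.

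First I would use additivity of the compactly supported Euler characteristic (which agrees with the usual one on compact spaces): $e(\Xt) = e_c(\Xt|_B) + e(\Xt|_\Delta)$. Since $\Xt|_B \to B$ is a fiber bundle, $e_c(\Xt|_B) = e_c(B)\cdot e(X) = (e(\overline{B}) - |\Delta|)(2 - 2g)$. For the singular part, each $\Xt_c$ is a nodal curve of arithmetic genus $g$, and separating the $\delta_c$ nodes under normalization shows
\[
e(\Xt_c) = 2 - 2g + \delta_c.
\]

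The key geometric input is the identification of $\delta_c$ with the number of vanishing cycles in $\Xt_c$, which was noted right after Definition \ref{def:Veech_fibration}: the semistable resolution of the cusp $c$ replaces the $A_{k_i^c-1}$ singularity at the core curve $\gamma_i^c$ with a chain contributing $k_i^c$ nodes, so $\delta_c = \sum_{i=1}^{\ell_c} k_i^c$ and hence $\sum_{c\in\Delta}\delta_c = T$ by the definition of the total twisting. Combining,
\[
e(\Xt) = (e(\overline{B})-|\Delta|)(2-2g) + \sum_{c \in \Delta}\bigl((2-2g) + \delta_c\bigr) = e(\overline{B})(2-2g) + T,
\]
the $|\Delta|(2-2g)$ terms cancelling cleanly. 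Substituting $e(\overline{B}) = 2-2b$ and $e(X) = 2-2g$ gives the desired $4(g-1)(b-1) + T$.

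I do not anticipate a serious obstacle: both multiplicativity over $B$ and the nodal-curve Euler characteristic formula are standard, and the only fact specific to Veech fibrations—that the total number of nodes in the semistable singular fibers equals $T$—has already been recorded in the text preceding the proposition. The one point that merits care is the bookkeeping between the stable model $\Xb$ (with $\ell_c$ nodes per singular fiber) and the semistable model $\Xt$ (with $\sum_i k_i^c$ nodes), which is precisely why the additional $T$ term, rather than $\sum_c \ell_c$, appears in the formula.
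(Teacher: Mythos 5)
Your proof is correct and follows essentially the same route as the paper: the paper cites the standard fibration formula $e(\Xt) = \chi(\overline{B})\chi(X) + \sum_F(\chi(F)-\chi(X))$ from Barth et al.\ and identifies $\chi(F)-\chi(X)$ with the number of vanishing cycles, while you re-derive that formula via additivity of $e_c$ and multiplicativity over $B$, and compute $\chi(F)-\chi(X)=\delta_c$ via the node count of the semistable fiber --- equivalent bookkeeping. In particular your identification $\sum_c \delta_c = T$ is exactly the paper's observation, so the argument goes through as written.
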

\begin{proof}
    By \cite[Prop~III.11.4]{Barth}, we have that
    \begin{equation*}
        e(\Xt) = \chi(\overline{B})\chi(X) + \sum_{\text{singular fibers $F$}} (\chi(F) - \chi(X)).
    \end{equation*}
    The proposition follows from the observation that for each singular fiber $F$, the quanitity $\chi(F) - \chi(X)$ is precisely the number of vanishing cycles on $F$.
\end{proof}

\subsubsection{Signature}
Next, we present a formula for the signature $\sigma(\Xt)$.
The formula mainly follows from the computations in \cite{Moller_PCMI}.
As we remark below, the formula also agrees with Smith's formula \cite{Smith} for the signature of a Lefschetz fibration in terms of the Hodge bundle on $\overline{\M}_g$.

\begin{proposition}\label{prop:signature_formula}
The signature of $\Xt$ is
\[
    \sigma(\Xt) = -2\kappa_\mu\chi(B) - \frac{2}{3}T,
\]
where $\chi(B) = 2 - 2b - \v \Delta \v$ is the Euler characteristic of $B$, $\mu = (m_1, \ldots, m_\ell)$ is the partition of $2g-2$ given by the zeros of $\omega$, and
\begin{equation*}
    \kappa_\mu \coloneqq \frac{1}{12}\sum_{i=1}^\ell \frac{m_i(m_i+2)}{m_i+1}.
\end{equation*}
\end{proposition}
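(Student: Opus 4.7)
The plan is to reduce the signature computation to a computation of the degree of the Hodge bundle on $\overline{B}$, then invoke formulas from \cite{Moller_PCMI}. First, combining Hirzebruch's signature theorem with Noether's formula gives
\[
\sigma(\Xt) = 4\chi(\O_{\Xt}) - e(\Xt),
\]
so by \autoref{prop:euler_char} it suffices to compute $\chi(\O_{\Xt})$. Applying the Leray spectral sequence to $f: \Xt \to \overline{B}$, together with the identification $R^1 f_* \O_{\Xt} \cong (f_* \omega_{\Xt/\overline{B}})^\vee$ from relative Serre duality, yields
\[
\chi(\O_{\Xt}) = (b-1)(g-1) + \deg \lambda,
\]
where $\lambda := f_* \omega_{\Xt/\overline{B}}$ is the Hodge bundle.

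Next I would compute $\deg \lambda$, following \cite{Moller_PCMI}. The Veech structure gives a tautological sub-line-bundle $\lambda_\omega \hookrightarrow \lambda$ spanned by the class of $\omega$. Because $\omega$ transforms as a weight-one modular form for $\SL(X,\omega)$, one has
\[
\deg \lambda_\omega = -\tfrac{1}{2}\chi(B).
\]
After passing to a finite cover of $\overline{B}$ on which the zeros of $\omega$ become disjoint sections $Z_i$ of orders $m_i$, the decomposition $\omega_{\Xt/\overline{B}} = f^*\lambda_\omega + \sum m_i Z_i$ (on the smooth locus) together with the adjunction relation $Z_i^2 = -\deg\lambda_\omega/(m_i+1)$ leads after a short calculation to $c_1(\omega_{\Xt/\overline{B}})^2 = 12 \kappa_\mu \deg \lambda_\omega$, where the combinatorial factor $\kappa_\mu = \tfrac{1}{12}\sum m_i(m_i+2)/(m_i+1)$ emerges. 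On the other hand, Mumford's relation $\kappa_1 = 12 \lambda - \delta$ on $\overline{\M}_g$, pulled back along $\overline{B} \to \overline{\M}_g$, gives $\int_{\overline{B}} \kappa_1 = 12 \deg \lambda - T$, since the boundary class $\delta$ counts nodes in the singular fibers. Comparing the two expressions for $\int_{\overline{B}}\kappa_1$ yields
\[
\deg \lambda = -\tfrac{1}{2}\kappa_\mu \chi(B) + \tfrac{T}{12}.
\]
Substituting into $\sigma(\Xt) = 4\deg\lambda - T$ (obtained by combining steps one and two with \autoref{prop:euler_char}) produces the stated formula.

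The main obstacle is the third step: the formula $\deg\lambda = -\tfrac{1}{2}\kappa_\mu\chi(B) + T/12$ is where all the structure of the Teichm\"uller curve enters, and the derivation requires careful bookkeeping of both the self-intersections of the zero sections of $\omega$ and the boundary correction from singular fibers. The passage to a cover where zeros become sections is harmless since the quantities $\deg\lambda$, $\chi(B)$, $T$, and $\kappa_\mu\chi(B)$ all scale uniformly by the degree of the cover, but one must verify that the log-pole data near each cusp match the contribution of $\delta$ on that cusp, which follows from the semistable reduction used to define $\Xt$.
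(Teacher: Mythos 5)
Your argument is correct and is essentially the paper's own proof: both reduce, via Noether's formula, the Hirzebruch signature theorem, and \autoref{prop:euler_char}, to $\sigma(\Xt) = 4\deg f_*\omega_{\Xt/\overline{B}} - T$, and both rest on the identity $\omega_{\Xt/\overline{B}}^2 = -6\kappa_\mu\chi(B)$. The only difference is that the paper cites this identity from M\"oller \cite{Moller_PCMI}, whereas you sketch its derivation (tautological line bundle of degree $-\tfrac12\chi(B)$, zero sections, adjunction), which is the same computation M\"oller carries out.
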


\begin{remark}
    Note that if we let $d$ denote the degree of the cover $B \rightarrow C$, then the quantity $\sigma(\Xt)/d$ is independent of the choice of cover.
    Indeed, we have that $\chi(B) = d\chi(C)$, and $T = dT'$ for a quantity $T'$ depending only on $C$ (see the proof of \autoref{prop:BMY_suff_cond}).
\end{remark}
To prove \autoref{prop:signature_formula}, we fix some notation.
Let $f:\Xt \rightarrow B$ be the Veech fibration.
Let $\omega_{\Xt}$ be the canonical sheaf of $\Xt$.
We let $\omega_{\Xt/\overline{B}}$ denote the \emph{relative dualizing sheaf} of $f$.
This is the sheaf on $\Xt$ which restricts to the dualizing sheaf on each fiber (in particular, it restricts to the canonical sheaf on each smooth fiber); see \cite[Ch.~3]{HarrisMorrison} for further background.

\begin{proof}
    As in the proof of \cite[Prop~3.3]{Moller_PCMI}, we have that
    \begin{align*}
        \chi(\O_{\Xt}) &= \deg f_*\omega_{\Xt/\overline{B}} + (g-1)(b-1) \quad \text{and} \\
        c_1^2(\omega_{\Xt}) &= \omega_{\Xt/\overline{B}}^2 + 8(g-1)(b-1),
    \end{align*}
    where $\chi(\O_{\Xt})$ is the holomorphic Euler characteristic.
    (See, e.g., the book of Xiao \cite{Xiao2008-mg} for more details.)
    From \autoref{prop:euler_char}, the Euler characteristic is
    \begin{equation*}
        c_2(\omega_{\Xt}) = e(\Xt) = 4(g-1)(b-1) + T.
    \end{equation*}
    Using Noether's formula $12\chi(\O_{\Xt}) - c_2(\omega_{\Xt}) = c_1^2(\omega_{\Xt})$, we conclude that
    \begin{equation*}
        12\deg f_*\omega_{\Xt/\overline{B}} - T = \omega_{\Xt/\overline{B}}^2.
    \end{equation*}
    
    In the proof of \cite[Prop~5.12]{Moller_PCMI}, M\"oller computes that
    \begin{equation*}
        \omega_{\Xt/\overline{B}}^2 = -6\kappa_\mu\chi(B).
    \end{equation*}
    By the Hirzebruch signature formula,
    \begin{align*}
        \sigma(\Xt) &= \frac{1}{3}\left(c_1^2(\omega_{\Xt}) - 2c_2(\omega_{\Xt})\right) \\
        &= \frac{1}{3}\left(12\chi(\O_{\Xt}) - 3c_2(\omega_{\Xt})\right) \\
        &= \frac{1}{3}\left(12\deg f_*\omega_{\Xt/\overline{B}} - 3T\right) \\
        &= \frac{1}{3}\left(\omega_{\Xt/\overline{B}}^2 - 2T\right) \\
        &= -2\kappa_\mu\chi(B) - \frac{2}{3}T.
    \end{align*}
\end{proof}

\begin{remark}\label{rmk:smith-formula}
    The formula in \autoref{prop:signature_formula} agrees with Smith's formula \cite{Smith} for the signature of a Lefschetz fibration.
    Letting $\lambda$ denote the first Chern class of the Hodge bundle over $\overline{\M}_g$, Smith's formula says that
    \begin{equation*}
        \sigma(\Xt) = 4(\overline{B} \cdot \lambda) - T.
    \end{equation*}
    (Smith assumes that the map $\Xt \rightarrow \overline{B}$ is injective on the set of critical points, but that makes no difference here).
    Since $\deg f_*\omega_{\Xt/\overline{B}} = \overline{B} \cdot \lambda$, this is precisely the equation
    \begin{equation*}
        \sigma(\Xt) = \frac{1}{3}\left(12\deg f_*\omega_{\Xt/\overline{B}} - 3T\right)
    \end{equation*}
    that we obtain in the course of proving \autoref{prop:signature_formula}.
    
    We also note that M\"oller shows in the proof of \cite[Prop~3.3]{Moller_PCMI} that
    \begin{equation*}
        T = \overline{B} \cdot \delta,
    \end{equation*}
    where $\delta$ is the total boundary divisor of $\overline{\M}_g$.
    Since the vanishing cycles are all non-separating, $\overline{B}$ only intersects the boundary divisor in strata corresponding to irreducible nodal curves of genus $g-1$.
\end{remark}

Observe that in the proof of \autoref{prop:signature_formula}, we obtain the following formula for $c_1^2(\Xt)$.

\begin{lemma}\label{lem:c1_squared}
    We have that
    \begin{equation*}
        c_1^2(\Xt) = -6\kappa_\mu\chi(B) + 8(g-1)(b-1).
    \end{equation*}
\end{lemma}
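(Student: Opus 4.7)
The plan is to simply extract this identity from the computation already carried out in the proof of \autoref{prop:signature_formula}. Recall that $c_1^2(\Xt) = c_1^2(\omega_{\Xt})$, so it suffices to express $c_1^2(\omega_{\Xt})$ in terms of the relative dualizing sheaf and the base.

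The first step is to invoke the relation
\[
    c_1^2(\omega_{\Xt}) = \omega_{\Xt/\overline{B}}^2 + 8(g-1)(b-1),
\]
which was recorded at the start of the proof of \autoref{prop:signature_formula} (and is a standard identity for fibered complex surfaces, see e.g. Xiao \cite{Xiao2008-mg}). The second step is to substitute M\"oller's identity
\[
    \omega_{\Xt/\overline{B}}^2 = -6\kappa_\mu\chi(B),
\]
which appears in the proof of \cite[Prop~5.12]{Moller_PCMI} and was also used in \autoref{prop:signature_formula}. Combining these two displays yields exactly the claimed formula for $c_1^2(\Xt)$.

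There is no real obstacle here, since both ingredients have already been deployed; the lemma is essentially a bookkeeping observation that the intermediate identity from the signature computation deserves to be recorded on its own for later use (e.g. in the minimality and BMY discussions of \autoref{sec:geometric_invariants}). The only thing to be careful about is verifying that $c_1^2$ of the surface agrees with $c_1^2(\omega_{\Xt})$, which holds because $\omega_{\Xt}$ is the canonical bundle and $c_1(\omega_{\Xt}) = c_1(T^*\Xt) = -c_1(\Xt)$, so $c_1^2$ coincides.
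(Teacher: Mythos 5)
Your proposal is correct and matches the paper's argument exactly: the lemma is read off from the proof of \autoref{prop:signature_formula} by combining $c_1^2(\omega_{\Xt}) = \omega_{\Xt/\overline{B}}^2 + 8(g-1)(b-1)$ with M\"oller's computation $\omega_{\Xt/\overline{B}}^2 = -6\kappa_\mu\chi(B)$. The remark that $c_1(\omega_{\Xt}) = -c_1(\Xt)$ has no effect after squaring is a fine (if implicit in the paper) justification for identifying the two $c_1^2$'s.
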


In Section 5, we use this formula for $c_1^2(\Xt)$ to determine the Kodaira dimension of $\Xt$.
Note that it does not depend on the twisting $T$.

\subsection{Fundamental group}\label{subsec:pi_1}

\subsubsection{Fundamental group as a semidirect prouct}
We start with a formula for $\pi_1(\Xt)$ as a semidirect product of a quotient of $\pi_1(X)$ and $\pi_1(\overline{B})$.
In principle this can be used to derive a presentation for $\pi_1(\Xt)$ in any given example.
The following proposition follows from more general results on Lefschetz fibrations, but we recount the details carefully.

\begin{proposition}\label{prop:pi_1_total_space}
    Let $K$ be the normal closure in $\pi_1(X)$ of all core curves of all cylinder decompositions of $X$.
    There is an isomorphism
    \begin{equation*}
        \pi_1(\Xt) \cong (\pi_1(X)/K) \rtimes \pi_1(\overline{B}).
    \end{equation*}
    Here the action of $\pi_1(\overline{B})$ on $\pi_1(X)/K$ is induced by a map $\pi_1(B) \rightarrow \Aut(\pi_1(X))$ obtained by lifting the monodromy $\pi_1(B) \rightarrow \Out(\pi_1(X))$ of the surface bundle $\X \rightarrow B$.
\end{proposition}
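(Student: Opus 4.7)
The plan is to compute $\pi_1(\Xt)$ in two stages: first analyze the smooth family $\X \to B$ via the homotopy long exact sequence, and then use van Kampen to glue in a Lefschetz-fibration model over a disk neighborhood of each cusp of $B$. The base $B$ is a surface with the cusp points removed, so $\pi_1(B)$ is free. Since $\X \to B$ is an honest $X$-bundle, the homotopy long exact sequence yields
\[
    1 \to \pi_1(X) \to \pi_1(\X) \to \pi_1(B) \to 1,
\]
which splits by freeness of $\pi_1(B)$. Hence $\pi_1(\X) \cong \pi_1(X) \rtimes \pi_1(B)$, where the action comes from a lift of the outer monodromy $\pi_1(B) \to \Out(\pi_1(X))$.

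Next, I would decompose $\Xt = \X \cup \bigcup_{c \in \Delta} \Xt|_{D_c}$, where $D_c$ is a disk neighborhood of each cusp $c$. The restriction $\Xt|_{D_c} \to D_c$ is a Lefschetz fibration whose vanishing cycles are the core curves $\gamma_1^c, \dots, \gamma_{\ell_c}^c$ of the cylinder decomposition at $c$ (the parallel copies introduced by the semistable resolution impose the same $\pi_1$-relation). The standard van Kampen calculation for a Lefschetz fibration over a disk gives $\pi_1(\Xt|_{D_c}) \cong \pi_1(X)/\la \gamma_i^c \ra$, and shows that the inclusion-induced map
\[
    \pi_1(\X|_{D_c^*}) \cong \pi_1(X) \rtimes \Z \longrightarrow \pi_1(\Xt|_{D_c})
\]
sends the $\pi_1(X)$-factor by the quotient and sends the cusp-loop generator $\alpha_c$ of $\Z$ to the identity: contracting $D_c$ to its center retracts $\Xt|_{D_c}$ onto the singular fiber and collapses $\alpha_c$ to a point. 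Van Kampen then yields $\pi_1(\Xt) \cong \pi_1(\X)/N$, where $N$ is the normal closure in $\pi_1(\X)$ of all $\gamma_i^c$ and all $\alpha_c$.

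To finish, I would identify this quotient with the claimed semidirect product. Because the cusps of $B$ surject onto those of $C$, and cusps of $C$ parametrize the $\Aff^+(X,\omega)$-orbits of cylinder decompositions of $X$, the $\pi_1(B)$-orbit of $\{\gamma_i^c\}$ inside the semidirect product is exactly the set of core curves of all cylinder decompositions, whose normal closure in $\pi_1(X)$ is $K$ by definition. Since $K$ is $\pi_1(B)$-stable, quotienting $\pi_1(X) \rtimes \pi_1(B)$ by $K$ produces $(\pi_1(X)/K) \rtimes \pi_1(B)$. Killing the cusp loops $\alpha_c$ then collapses $\pi_1(B)$ to $\pi_1(\overline{B})$, and the action descends because each $\alpha_c$ acts on $\pi_1(X)$ by a Dehn multi-twist about the $\gamma_i^c$, which is trivial on $\pi_1(X)/K$. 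This gives $\pi_1(\Xt) \cong (\pi_1(X)/K) \rtimes \pi_1(\overline{B})$, as desired.

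The main obstacle will be the identification in the last step: carefully tracking the interplay between cusps of $B$, cusps of $C$, and $\Aff^+(X,\omega)$-orbits of cylinder decompositions, and verifying that the normal subgroup of $\pi_1(X)$ generated inside the semidirect product really coincides with the group-theoretic $K$. Everything else is a formal consequence of van Kampen together with the freeness of $\pi_1(B)$.
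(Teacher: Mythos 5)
Your proposal is correct and follows essentially the same route as the paper's proof: split the extension $1 \to \pi_1(X) \to \pi_1(\X) \to \pi_1(B) \to 1$ using freeness of $\pi_1(B)$, apply van Kampen to kill the vanishing cycles and cusp loops, identify the resulting normal subgroup of $\pi_1(X)$ with $K$ via the fact that the monodromy orbits of the vanishing cycles are exactly the core curves of all cylinder decompositions, and note that the cusp monodromy (a multitwist about those curves) acts trivially on $\pi_1(X)/K$ so the action descends to $\pi_1(\overline{B})$. The only difference is that you make the van Kampen step explicit with disk neighborhoods of the cusps and the local Lefschetz model, which the paper states more tersely.
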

\begin{proof}
    Since $\X \rightarrow B$ is a surface bundle, we have a short exact sequence
    \begin{equation*}
        1 \rightarrow \pi_1(X) \rightarrow \pi_1(\X) \rightarrow \pi_1(B) \rightarrow 1.
    \end{equation*}
    In fact, since $\pi_1(B)$ is free we can choose a splitting
    \begin{equation*}
        \pi_1(\X) \cong \pi_1(X) \rtimes \pi_1(B).
    \end{equation*}
    Here, the action of $\pi_1(B)$ on $\pi_1(X)$ is a lift of the monodromy map $\pi_1(B) \rightarrow \Out(\pi_1(X))$.

    We now add in the relations coming from the singular fibers.
    Assume for convenience that $\Xt$ has a single singular fiber; the general case follows similarly.
    Let $c_1, \ldots, c_k$ be the conjugacy classes of $\pi_1(X)$ corresponding to the vanishing cycles, and let $d$ be the conjugacy class of $\pi_1(B)$ corresponding to a loop around the cusp (when appropriate, we identify $c_i$ and $d$ with their images in $\pi_1(\X)$).
    Van Kampen's theorem gives that $\pi_1(\Xt)$ is the quotient of $\pi_1(\X)$ by the $\pi_1(\X)$-normal closure of the elements $c_1, \ldots, c_k, d$.

    We now make two observations.
    First, observe that the $\pi_1(\X)$-normal closure of $c_1, \ldots, c_k$ is equal to the $\pi_1(X)$-normal closure of the orbits $\pi_1(B) \cdot c_1, \ldots, \pi_1(B) \cdot c_k$.
    That is, if we write $\la\la \cdot \ra\ra_G$ for the $G$-normal closure of a set of elements and consider the groups
    \begin{equation*}
        \widehat{\pi_1(\X)} \coloneqq \pi_1(\X)/\la\la c_1, \ldots, c_k \ra \ra_{\pi_1(\X)}
    \end{equation*}
    and 
    \begin{equation*}
        K \coloneqq \la\la \pi_1(B) \cdot c_1, \ldots, \pi_1(B) \cdot c_k \ra\ra_{\pi_1(X)} \subseteq \pi_1(X),
    \end{equation*}
    then we have
    \begin{equation*}
        \widehat{\pi_1(\X)} \cong \pi_1(X)/K \rtimes \pi_1(B).
    \end{equation*}
    Second, as the monodromy around the cusp of $B$ is a multitwist around the curves $c_1, \ldots, c_k$, the $\pi_1(B)$-normal closure of $d$ acts trivially on $\pi_1(X)/K$.
    
    We conclude that
    \begin{align*}
        \pi_1(\Xt) &\cong \pi_1(\X)/\la\la c_1, \ldots, c_k, d \ra\ra_{\pi_1(\X)} \\
        &\cong \widehat{\pi_1(\X)}/\la\la d \ra\ra_{\widehat{\pi_1(\X)}} \\
        &\cong (\pi_1(X)/K) \rtimes (\pi_1(B)/\la\la d \ra\ra_{\pi_1(B)}) \\
        &\cong (\pi_1(X)/K) \rtimes \pi_1(\overline{B}).
    \end{align*}

    The proposition follows from the fact that the monodromy orbits of the vanishing cycles on $X$ are precisely the core curves of all cylinder decompositions of $X$.
\end{proof}

We emphasize that the splitting of $\pi_1(\Xt)$ obtained from the proof of \autoref{prop:pi_1_total_space} is not natural; it comes from choosing a right inverse of the map $\pi_1(\X) \rightarrow \pi_1(B)$.
In particular, the splitting does not necessarily arise from a section of $\Xt \rightarrow \overline{B}$.

In light of \autoref{prop:pi_1_total_space}, understanding $\pi_1(\Xt)$ amounts to understanding the subgroup $K$.  We are thus led to the following question:

\begin{question}
    When does $K$ equal $\pi_1(X)$?  Or equivalently, when is the map $\pi_1(\Xt) \rightarrow \pi_1(\overline{B})$ an isomorphism?
\end{question}

\subsubsection{A sufficient condition}
We now provide a sufficient condition to determine that $K = \pi_1(X)$, i.e.\ that $\pi_1(\Xt) \cong \pi_1(\overline{B})$.  
In \autoref{sec:alg_prim_Tcurves}, we apply this criterion to all the known families of algebraically primitive Veech surfaces.

\begin{proposition}\label{prop:pi_1_suff_cond}
    Suppose $(X,\omega)$ is a Thurston-Veech surface with a connected polygon representative $P$.
    Suppose that each core curve of the horizontal and vertical cylinders crosses the boundary of $P$ a single time.
    Then $K = \pi_1(X)$ and hence $\pi_1(\Xt) \cong \pi_1(\overline{B})$.
\end{proposition}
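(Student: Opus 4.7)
The plan is to use the polygon $P$ to exhibit an explicit generating set of $\pi_1(X)$ and verify that each generator lies in $K$.

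First, fix a basepoint $x_0 \in \Int(P)$. Since $P$ is simply connected, $\pi_1(X, x_0)$ is generated by the edge-loops $\alpha_e$, one for each pair of identified edges $e = \{e^+, e^-\}$ of $P$: the loop $\alpha_e$ travels from $x_0$ to an interior point of $e^+$, jumps via the translation identification to the corresponding point on $e^-$, and returns to $x_0$. Since no non-degenerate edge of $P$ is simultaneously horizontal and vertical, every pair $e$ is either non-horizontal, non-vertical, or both.

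The core step is to show that $\alpha_e \in K$ whenever $e$ is a pair of non-horizontal edges. I would pick a generic interior point $p \in e^+$, so that the horizontal leaf through $p$ avoids the zeros of $\omega$. Since $(X, \omega)$ is horizontally periodic, this leaf closes up as the core curve $\gamma$ of some horizontal cylinder $C$. By the single-crossing hypothesis, $C \cap \Int(P)$ consists of a single horizontal strip, and its two non-horizontal sides on $\partial P$ must be identified to each other by the translation closing up $C$; being paired in this way, they are forced to be $\{e^+, e^-\} = e$. Therefore $\gamma$ is freely homotopic to $\alpha_e$, so $\alpha_e$ lies in the $\pi_1(X)$-normal closure of $\gamma$, and hence in $K$. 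Swapping horizontal and vertical throughout gives the same conclusion for pairs $e$ of non-vertical edges.

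Combining the two cases, every generator $\alpha_e$ of $\pi_1(X)$ lies in $K$, so $K = \pi_1(X)$, and \autoref{prop:pi_1_total_space} immediately yields $\pi_1(\Xt) \cong \pi_1(\overline{B})$. The main obstacle is the identification of the strip's sides with the chosen pair $e$: one must verify that the horizontal cylinder $C$ built from a generic $p \in e^+$ really has $e^+$ and $e^-$ as its two sides on $\partial P$. This combines the single-crossing hypothesis with the fact that the generic horizontal leaf through an edge of $P$ is a cylinder core curve rather than a saddle connection (guaranteed by the Thurston--Veech assumption), together with the uniqueness of the identification partner of each edge in the polygon.
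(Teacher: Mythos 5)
Your argument is correct in outline but takes a genuinely different route from the paper. The paper works with the graph $\Gamma$ formed by the horizontal and vertical core curves: since the core curves fill, $\Gamma$ is a $1$-skeleton for $X$; one takes a spanning tree $T$ consisting of edges of $\Gamma$ interior to $P$, observes via the single-crossing hypothesis that $\Gamma/T$ is a wedge of circles each of which is the image of a core curve, and concludes that the core curves generate $\pi_1(\Gamma)$, which surjects onto $\pi_1(X)$. You instead take the standard edge-pairing generators $\alpha_e$ of $\pi_1(X)$ coming from the polygon and show each one is freely homotopic to a closed horizontal or vertical leaf, hence conjugate into $K$. What your route buys is that it avoids any claim about the in-$P$ portions of the core curves forming a tree (connectedness and acyclicity of the non-crossing subgraph, which the paper's phrase ``maximal spanning tree'' quietly assumes), and it produces explicit conjugacies between polygon generators and core curves; what it costs is the strip lemma discussed next.

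The one step you should spell out, and which your closing remark slightly misidentifies, is the passage from ``the core curve of $C$ crosses $\partial P$ once'' to ``$C \cap \Int(P)$ is a single strip,'' equivalently ``the closed leaf through the chosen $p \in e^+$ crosses $\partial P$ exactly once.'' The hypothesis constrains one particular leaf of $C$, not the whole cylinder, so you need the crossing number with $\partial P$ to be the same for every closed leaf of $C$. This holds because horizontal edges of $P$ are horizontal saddle connections and so lie in $\partial C$, while every component of the intersection of a non-horizontal edge of $P$ with the open cylinder $C$ is an arc running from one boundary circle of $C$ to the other (its endpoints are zeros of $\omega$, which are not in the open cylinder, and the vertical coordinate is strictly monotone along a non-horizontal straight segment inside $C$); such an arc meets every closed leaf of $C$ exactly once, so all closed leaves of $C$ have the same number of crossings with $\partial P$ as the core curve, namely one. (This uses that the vertices of $P$ map to zeros, which is the case in all the paper's applications.) Once this is in place, the identification of the strip's sides with the pair $e$ that worried you is immediate: the leaf through $\bar p$ certainly crosses $\partial P$ at $\bar p \in e$, and since it crosses only once, that is its only crossing, so it is freely homotopic to $\alpha_e^{\pm 1}$; your appeal to ``the generic leaf being a core curve rather than a saddle connection'' is not the relevant point. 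With that lemma added, your proof is complete and \autoref{prop:pi_1_total_space} finishes the argument as you say.
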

\begin{proof}
    Let $\Gamma$ be the union of the core curves of the horizontal and vertical cylinders, viewed as an embedded graph with vertices at the intersection points.
    Since the core curves fill $X$, the graph $\Gamma$ is in fact a 1-skeleton for a cell structure on $X$.
    Let $T \subseteq \Gamma$ be the maximal spanning tree consisting of edges that do not cross the boundary of $P$.
    The hypothesis implies that $\Gamma/T$ is a wedge of circles where each circle is the image of a core curve.
    So the core curves generate $\pi_1(\Gamma)$, and the Proposition follows since the map $\pi_1(\Gamma) \rightarrow \pi_1(X)$ is surjective.
\end{proof}

See \autoref{fig:l-table} for an example and \autoref{fig:EW} for a non-example of a surface satisfying the hypothesis of \autoref{prop:pi_1_suff_cond}.
We now discuss the nonexample.

\subsubsection{A negative example: the Eierlegende Wollmilchsau}\label{subsec:wollmilchsau}
Let $(X,\omega)$ be the \emph{Eierlegende Wollmilchsau}.  
This is the genus 3 square-tiled surface pictured in Figure \ref{fig:EW}.
It is the $Q_8$-cover of the square torus obtained by mapping the generators of $\pi_1(T^2)$ to $i$ and $j$.

\begin{figure}[ht]
    \centering
    \includegraphics[scale=.4]{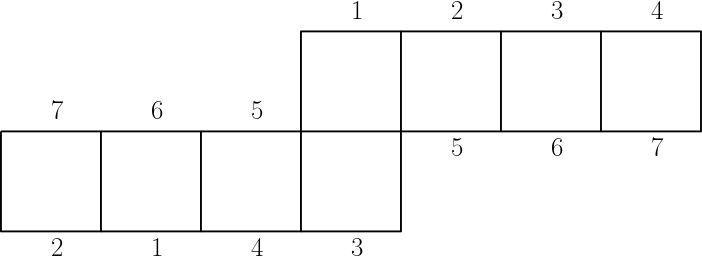}
    \caption{The Eierlegende Wollmilschau.  Side identifcations are given by numbers; unlabeled sides are identified with their opposite.}
    \label{fig:EW}
\end{figure}

We claim that the map $K \rightarrow H_1(X)$ is not surjective, so that $K \neq \pi_1(X)$; we follow an argument of Matheus \cite{matheus--math-overflow}.
We first recall that $\SL(X,\omega) = \SL(2,\Z)$ \cite{Herrlich_Schmithusem_EW}.
This means that the Teichm\"uller curve of $X$ has a single cusp, or equivalently, all cylinder decompositions lie in a single $\Aff^+(X,\omega)$-orbit.
From the natural horizontal cylinder decomposition pictured in Figure \ref{fig:EW}, we see that there are two cylinders, and their core curves are homologous.
Letting $\gamma \in H_1(X;\Z)$ denote one of these core curves (say oriented to the right), we deduce that the image of $K$ in $H_1(X;\Z)$ is precisely the orbit $\Aff^+(X,\omega) \cdot \gamma$.

We claim that, over $\Q$, this orbit is in fact the 2-dimensional tautological subspace $H_1^{st}(X;\Q)$.
The orbit is at least 2-dimensional since the homology class of a vertical core curve intersects $\gamma$ nontrivially.
On the other hand, we can show that the orbit is contained in $H_1^{st}(X;\Q)$.
Let $\sigma \in H_1(X;\Q)$ be the union of the bottom edges of the 8 squares of $(X,\omega)$, oriented to the right, and let $\zeta \in H_1(X;\Q)$ be the union of the left vertical edges, oriented upwards.
The set $\{\sigma,\zeta\}$ is a well-known basis of $H_1^{st}(X;\Q)$.
We see that $2\gamma = \sigma$, so $\gamma \in H_1^{st}(X;\Q)$ and hence $\Aff^+(X,\omega) \cdot \gamma \subseteq H_1^{st}(X;\Q)$.

\subsection{Betti numbers}\label{subsec:betti_numbers}

\subsubsection{First homology}
The semidirect product decomposition in \autoref{prop:pi_1_total_space} will allow us to compute $H_1(\Xt;\Z)$.
By Poincar\'e duality, the rank of $H_1(\Xt;\Z)$ and the Euler characterstic $e(\Xt)$ is enough information to compute all the Betti numbers of $\Xt$.

By \autoref{prop:pi_1_total_space}, we have an isomorphism
\begin{equation*}
    \pi_1(\Xt) \cong \pi_1(X)/K \rtimes \pi_1(\overline{B}),
\end{equation*}
where $K$ is the normal closure of the core curves of all cylinder decompositions of $X$.
From the 5-term exact sequence associated to the Hochschild-Serre spectral sequence (see, e.g., \cite[Cor~VII.6.4]{Brown}) we get a splitting
\begin{equation}\label{eqn:H1(Xt)_splits}
    H_1(\Xt; R) \cong H_1(\pi_1(X)/K;R)_{\pi_1(\overline{B})} \oplus H_1(\overline{B};R)
\end{equation}
for $R = \Z$ or $\Q$, where the subscript denotes coinvariants.

We naturally ask:
\begin{question}\label{ques:H1}
    When does $H_1(\pi_1(X)/K;R)_{\pi_1(\overline{B})}$ vanish?
    Equivalently, when is the natural map $H_1(\Xt;R) \rightarrow H_1(\overline{B};R)$ an isomorphism?
\end{question}

\subsubsection{Sufficient conditions for vanishing coinvariants}
Towards \autoref{ques:H1}, we define the \emph{core curve subspace}
\begin{equation*}
    H_1^{\cc}(X;R) \coloneqq \Im(K \rightarrow H_1(X;R)) = \Im(H_1(K; R) \rightarrow H_1(X; R)).
\end{equation*}
From the short exact sequence
\begin{equation*}
    1 \rightarrow K \rightarrow \pi_1(X) \rightarrow \pi_1(X)/K \rightarrow 1
\end{equation*}
we abelianize to obtain a short exact sequence
\begin{equation*}
    0 \rightarrow H_1^{\cc}(X;R) \rightarrow H_1(X;R) \rightarrow H_1(\pi_1(X)/K;R) \rightarrow 0
\end{equation*}
for $R = \Z$ or $R = \Q$.  In particular,
\begin{equation}\label{eqn:H1(X)_splits}
    H_1(X;\Q) \cong H_1^{\cc}(X;\Q) \oplus H_1(\pi_1(X)/K;\Q).
\end{equation}

We immediately deduce:
\begin{observation}\label{obs:H1_suff_condition}
    If $H_1^{\cc}(X;R) = H_1(X;R)$, then $H_1(\pi_1(X)/K;R) = 0$ and
    \begin{equation*}
        H_1(\Xt;R) \cong H_1(\overline{B};R).
    \end{equation*}
\end{observation}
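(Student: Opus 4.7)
The plan is that this observation falls out almost immediately from the two short exact sequences already assembled in the paragraphs preceding the statement, so the proof is essentially a two-line chase. The key point is to recognize that the hypothesis $H_1^{\cc}(X;R) = H_1(X;R)$ forces the quotient group $H_1(\pi_1(X)/K;R)$ in the exact sequence
\begin{equation*}
    0 \rightarrow H_1^{\cc}(X;R) \rightarrow H_1(X;R) \rightarrow H_1(\pi_1(X)/K;R) \rightarrow 0
\end{equation*}
to be zero by exactness: the first map is by definition the inclusion of the image of $K \to H_1(X;R)$ into $H_1(X;R)$, and if this image is already all of $H_1(X;R)$ then the cokernel vanishes.

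Once $H_1(\pi_1(X)/K;R) = 0$ is established, the coinvariants $H_1(\pi_1(X)/K;R)_{\pi_1(\overline{B})}$ are a quotient of the zero group and hence also vanish. Plugging this into the splitting (\ref{eqn:H1(Xt)_splits}) obtained from the Hochschild--Serre five-term exact sequence gives
\begin{equation*}
    H_1(\Xt; R) \cong 0 \oplus H_1(\overline{B};R) \cong H_1(\overline{B};R),
\end{equation*}
which is the desired conclusion.

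There is no real obstacle here since both exact sequences are already in place; the only subtlety worth flagging is that the isomorphism $H_1(\Xt;R) \cong H_1(\overline{B};R)$ should be identified as the map induced by the projection $\Xt \to \overline{B}$ (this is what (\ref{eqn:H1(Xt)_splits}) produces), so the statement is genuinely about the natural map being an isomorphism rather than just an abstract isomorphism of abelian groups. Given that the surrounding discussion (Question \ref{ques:H1}) asks exactly whether the natural map is an isomorphism, this identification is what the reader should take away, and it is immediate from how the splitting (\ref{eqn:H1(Xt)_splits}) is set up.
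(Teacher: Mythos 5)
Your proof is correct and is essentially the argument the paper intends: the vanishing of $H_1(\pi_1(X)/K;R)$ follows from exactness of the abelianized sequence, the coinvariants then vanish, and the splitting (\ref{eqn:H1(Xt)_splits}) gives $H_1(\Xt;R) \cong H_1(\overline{B};R)$, which is exactly why the paper prefaces the statement with ``We immediately deduce.'' Your added remark that the isomorphism is realized by the map induced by $\Xt \to \overline{B}$ is a correct and worthwhile clarification, consistent with how Question~\ref{ques:H1} is phrased.
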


In Section 6, we'll see that this is enough to determine the Betti numbers of $\Xt$ whenever $(X,\omega)$ is algebraically primitive.

\subsubsection{Example of non-vanishing coinvariants: the Eierlegende Wollmilchsau}
Let $(X,\omega)$ be the Eierlegende Wollmilchsau, as defined above.
We claim that for a particular choice of base $B$ (or equivalently a particular choice of torsion free finite index subgroup of $\Aff^+(X,\omega)$), one has that
\begin{equation*}
    H_1(\pi_1(X)/K;\Q)_{\pi_1(\overline{B})} \neq 0,
\end{equation*}
and hence $b_1(\Xt) > b_1(\overline{B})$.  This follows from two facts:
\begin{enumerate}
    \item For sufficiently large $m$, the action of $\Aff^+(X,\omega)[m]$ on $H_1^{(0)}(X,\Q)$ is trivial.
    \item With respect to the splitting (\ref{eqn:H1(X)_splits}), the intersection $H_1^{(0)}(X;\Q) \cap H_1(\pi_1(X)/K;\Q)$ is nonzero.
\end{enumerate}
Indeed, from these two fact, we get that
\begin{align*}
    0 &\neq H_1^{(0)}(X;\Q) \cap H_1(\pi_1(X)/K;\Q) \\ 
    &\subseteq H_1(\pi_1(X)/K;\Q)^{\Aff^+(X,\omega)[m]} \\ 
    &\cong H_1(\pi_1(X)/K;\Q)_{\Aff^+(X,\omega)[m]}.
\end{align*}
Then for any base $B$ with $\pi_1(B) \subseteq \Aff^+(X,\omega)[m]$, we conclude that $b_1(\Xt) > b_1(\overline{B})$.

Fact (1) follows from the fact that $\Aff^+(X,\omega)$ acts on $H_1^{(0)}(X;\Z)$ via a \textit{finite} group (see \cite[\S~8.5]{forni_matheus_survey}).
Note that $H_1^{(0)}(X;\Z)$ is a direct summand of $H_1(X;\Z)$ because $H_1^{(0)}(X;\Z) = \Ker(\hol)$ and $\Im(\hol)$ is torsion free.
It follows that we can choose $m$ large enough so that this entire finite group acts nontrivially on the image of $H_1^{(0)}(X;\Z)$ in $H_1(X;\Z/m\Z)$, and hence $\Aff^+(X,\omega)[m]$ acts trivially on $H_1^{(0)}(X;\Z)$.

We can prove fact (2) with a dimension count.
Since $X$ has genus 3, we have that $H_1(X;\Q)$ has dimension 6 and $H_1^{(0)}(X;\Q)$ has dimension 4.
But $H_1(\pi_1(X)/K;\Q)$ also has dimension 4: there is a splitting (\ref{eqn:H1(X)_splits}), but we showed in \autoref{subsec:wollmilchsau} that $H_1^{\cc}(X;\Q)$ has dimension at most 2. 

\subsection{Intersection form}\label{subsec:intersection_form}
Finally, we comment on the intersection form $Q$ of the $4$-manifold $\Xt$.
As $\Xt \rightarrow \overline{B}$ is a Lefschetz fibration, a smooth fiber will be a nontrivial homology class with zero self-intersection, and hence $Q$ is indefinite.
Combining our results on Euler characteristic, signature and Betti numbers, we can compute the rank and signature of $Q$ in many cases.
The essential remaining property $Q$ to determine is its parity.

The following result of Chen and M\"oller \cite[Prop~4.8]{ChenMoller} allows us to compute the self-intersection of certain natural homology classes, and hence show that $Q$ is odd in some examples.
Let $(m_1, \ldots, m_\ell)$ be the partition of $2g-2$ corresponding to the zeros of $\omega$.
Assume that $\pi_1(B) \subseteq \Aff^+(X,\omega)$ fixes the zero of order $m_i$ (we could pass to a cover to make this the case).
The $i$th zero then gives a section of $\Xt \rightarrow \overline{B}$, whose image is a homology class $S_i \in H_2(\Xt;\Z)$.

\begin{proposition}[Chen--M\"oller]\label{prop:zero_section}
    We have
    \begin{equation*}
        S_i^2 = \frac{-\chi(B)}{2(m_i + 1)} = \frac{2 - 2b - \v \Delta \v}{2(m_i + 1)}.
    \end{equation*}
\end{proposition}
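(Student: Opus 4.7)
The plan is to combine the adjunction formula on $\Xt$ with a divisor-class identity that expresses the relative dualizing sheaf as the pullback of a tautological line bundle on the base plus multiples of the sections $S_i$.

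Since $S_i$ is a section of $f:\Xt \to \overline{B}$, the restriction $f|_{S_i}$ is an isomorphism and $g(S_i) = b$. Writing $K_{\Xt} = \omega_{\Xt/\overline{B}} + f^*K_{\overline{B}}$ and applying the projection formula to $f^*K_{\overline{B}} \cdot S_i = 2b - 2$, adjunction on $\Xt$ collapses to the relation $S_i^2 = -\omega_{\Xt/\overline{B}} \cdot S_i$. This reduces the problem to computing the degree of $\omega_{\Xt/\overline{B}}$ restricted to $S_i$.

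To get a handle on this, introduce the tautological line subbundle $\mathcal{L} \subseteq f_*\omega_{\Xt/\overline{B}}$ of the Hodge bundle, whose fiber at $t \in B$ is the one-dimensional subspace $\C\omega_t \subseteq H^0(X_t,\Omega^1_{X_t})$ and which extends to $\overline{B}$. The distinguished form $\omega$ defines a map of line bundles $f^*\mathcal{L} \to \omega_{\Xt/\overline{B}}$ on $\Xt$ whose vanishing locus on the smooth fibers is precisely $\sum_i m_i S_i$, giving the Picard-group identity
\[
\omega_{\Xt/\overline{B}} = f^*\mathcal{L} + \sum_{i=1}^{\ell} m_i S_i.
\]
Since the zeros of $\omega$ are distinct points on each smooth fiber that remain distinct points on the main component of each semistable fiber, the sections $S_i$ are pairwise disjoint and avoid the exceptional $\P^1$-chains introduced in the blowup $\Xt \to \Xb$. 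Intersecting the identity with $S_j$, the projection formula gives $f^*\mathcal{L} \cdot S_j = \deg\mathcal{L}$, while disjointness yields $S_i \cdot S_j = 0$ for $i \neq j$. Thus $\omega_{\Xt/\overline{B}} \cdot S_j = \deg\mathcal{L} + m_j S_j^2$, and substitution into adjunction gives $(m_j+1)S_j^2 = -\deg\mathcal{L}$.

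To finish, I would identify $\deg\mathcal{L}$. Squaring the Picard identity, using $(f^*\mathcal{L})^2 = 0$ together with the just-derived $S_j^2 = -\deg\mathcal{L}/(m_j+1)$, a short algebraic manipulation (with $\sum_i m_i = 2g-2$) produces $\omega_{\Xt/\overline{B}}^2 = 12\kappa_\mu\deg\mathcal{L}$. Comparison with Möller's formula $\omega_{\Xt/\overline{B}}^2 = -6\kappa_\mu\chi(B)$ already invoked in the proof of \autoref{prop:signature_formula} pins down $\deg\mathcal{L} = -\chi(B)/2$, so
\[
S_j^2 = \frac{-\deg\mathcal{L}}{m_j+1} = \frac{\chi(B)}{2(m_j+1)} = \frac{2 - 2b - \v\Delta\v}{2(m_j+1)},
\]
which matches the algebraic form in the statement. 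The main technical subtlety is justifying the Picard-group identity across the semistable boundary fibers: $\omega$ extends as a meromorphic section of $\omega_{\Xt/\overline{B}}$ with simple poles and opposite residues at the nodes (determined by the cylinder circumferences), and may acquire additional zero or pole divisors supported on the exceptional $\P^1$-chains. One must verify that any such correction $D$ is disjoint from every $S_j$ (so the intersection computation is unchanged) and behaves compatibly with $\omega_{\Xt/\overline{B}}^2$, or alternatively circumvent this by deriving $\deg\mathcal{L}=-\chi(B)/2$ directly from the modular-forms interpretation on the Teichm\"uller disk.
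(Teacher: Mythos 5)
The paper itself offers no proof of \autoref{prop:zero_section} --- it is quoted directly from \cite[Prop~4.8]{ChenMoller} --- so your proposal should be measured against their standard argument, and it is essentially that argument with one twist. The core steps (adjunction along the section, giving $\omega_{\Xt/\overline{B}}\cdot S_i = -S_i^2$, combined with the divisor identity $\omega_{\Xt/\overline{B}} = f^*\mathcal{L} + \sum_i m_i S_i$ and disjointness of the $S_i$) are exactly theirs; where Chen--M\"oller take $\deg\mathcal{L} = -\chi(B)/2$ from M\"oller's maximal-Higgs property of the tautological subbundle of the Hodge bundle, you instead recover it by squaring the divisor identity and comparing with $\omega_{\Xt/\overline{B}}^2 = -6\kappa_\mu\chi(B)$, which this paper already imports in the proof of \autoref{prop:signature_formula}. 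That substitution is sound: the manipulation $2(2g-2) - \sum_i m_i^2/(m_i+1) = 12\kappa_\mu$ checks out, $\kappa_\mu \neq 0$ whenever $\omega$ has a zero, and there is no circularity since $S_i^2$ is first expressed in terms of the unknown $\deg\mathcal{L}$ and the cited value of $\omega_{\Xt/\overline{B}}^2$ is an independent input; what it buys is a proof using only facts already invoked in the paper, at the cost of making everything hinge on the Picard identity holding on the semistable model over the cusps. The subtlety you flag there is indeed the crux, and your sketch is the right one: the limit stable differential is a \emph{holomorphic} section of the dualizing sheaf on the boundary fibers (simple poles with opposite residues at nodes are already built into $\omega_{\Xb/\overline{B}}$), its residues at the nodes are, up to scale, the circumferences of the pinched cylinders and hence nonzero, so the section of $\omega_{\Xb/\overline{B}}\otimes f^*\mathcal{L}^{-1}$ is a local generator near every node and has divisor exactly $\sum_i m_i S_i$; and since the $A_{n-1}$ points are Du Val, $\omega_{\Xt/\overline{B}}$ pulls back from $\omega_{\Xb/\overline{B}}$ with no discrepancy, so no correction supported on the exceptional $(-2)$-chains arises and the sections, which avoid the nodes, are unaffected. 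One further point worth recording: your derivation yields $S_i^2 = \chi(B)/(2(m_i+1)) = (2-2b-\v \Delta \v)/(2(m_i+1)) < 0$, which agrees with the second expression in the statement and with the value $-3$ used for the double pentagon in \autoref{rem:double_pentagon}; with the paper's convention $\chi(B) = 2-2b-\v\Delta\v$, the first expression $-\chi(B)/(2(m_i+1))$ carries the wrong sign, so the sign you obtained is the correct one.
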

Were $S_i^2$ to be odd, we'd have that $Q$ is an odd intersection form.

\begin{remark}\label{rmk:per-points-sections}
    A periodic point, that is, a point $p \in X$ with a finite $\Aff^+(X, \omega)$-orbit, also determines a section of $\Xt \to \overline{B}$ (up to passing to a finite cover).
    Examples include the zeros of $\omega$ and the 6 Weierstrass points of a genus 2 Veech surface.
    It would be interesting to find a similar self-intersection formula in this case.
    See \cite{moller_periodic_2006} for more information on periodic points in the context of families over Teichm\"uller curves.
\end{remark}

\section{Complex-geometric invariants}\label{sec:geometric_invariants}
Let $\Xt \rightarrow \overline{B}$ be a Veech fibration with fiber $(X,\omega)$.
In this section, we study two complex geometric properties of the surface $\Xt$: its Kodaira dimension $\kappa(\Xt)$ and its Bogomolov-Miyaoka-Yau (BMY) inequality.
Recall that the Kodaira dimension of a complex surface can be $-\infty$, $0$, $1$, or $2$, dividing minimal complex surfaces into four broad classes (see \cite{Barth} for details on the classification of minimal complex surfaces).
The BMY inequality says that for a minimal general type surface, $c_1^2 \leq 3c_2$, and that if the equality holds then the surface is a ball quotient.

In \autoref{subsec:kodaira_dim}, we show that if $X$ and $B$ have sufficiently high genus, then $\Xt$ is a minimal complex surface with Kodaira dimension 2, i.e., a general type surface.
We also comment on the low genus cases.
In \autoref{subsec:BMY}, we provide a sufficient condition to show that the BMY inequality is strict for $\Xt$.  We apply this to congruence Veech fibrations $\Xt_m$ for sufficiently high $m$.

\subsection{Kodaira dimension}\label{subsec:kodaira_dim}

\subsubsection{General type Veech fibrations}
We begin by showing that most Veech fibrations are general type surfaces, i.e.\ have $\kappa(\Xt) = 2$.

\begin{proposition}\label{thm:kodaira-dimension-genus-high}
    If $X$ has genus $g \geq 2$ and $B$ has genus $b \geq 1$, then $\Xt$ is a minimal complex surface of general type.
\end{proposition}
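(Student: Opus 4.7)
The plan is to establish three facts and then invoke the Enriques--Kodaira classification: (i) $\Xt$ contains no $(-1)$-curve, (ii) $c_1^2(\Xt) > 0$, and (iii) $\Xt$ is not rational. Since a minimal surface of Kodaira dimension $0$ or $1$ has $c_1^2 = 0$, and a minimal non-rational ruled surface over a base of genus $h \geq 1$ has $c_1^2 = 8(1-h) \leq 0$, these three facts will together force $\kappa(\Xt) = 2$, i.e., $\Xt$ is of general type.

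For (i), I would observe that the Lefschetz fibration $\Xt \to \overline{B}$ is relatively minimal (as noted in \autoref{sec:veech-fibrations}), so any hypothetical $(-1)$-curve $E \cong \P^1$ cannot lie in a fiber; it would then have to map dominantly onto $\overline{B}$, which is impossible since $b \geq 1$. For (ii), \autoref{lem:c1_squared} gives
\[
c_1^2(\Xt) = -6\kappa_\mu\, \chi(B) + 8(g-1)(b-1).
\]
Since $g \geq 2$, the partition $\mu$ of $2g-2$ is nonempty and each summand in the defining sum of $\kappa_\mu$ is positive, so $\kappa_\mu > 0$. Because Veech groups are never cocompact, $B$ has at least one cusp, and with $b \geq 1$ this forces $\chi(B) = 2 - 2b - |\Delta| \leq -1 < 0$. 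Finally $8(g-1)(b-1) \geq 0$, so $c_1^2(\Xt) > 0$. For (iii), I would use the surjection $\pi_1(\Xt) \twoheadrightarrow \pi_1(\overline{B})$ from \autoref{prop:pi_1_total_space} to conclude $b_1(\Xt) \geq b_1(\overline{B}) = 2b \geq 2$; since rational surfaces have $b_1 = 0$, $\Xt$ is not rational.

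I do not anticipate a serious obstacle. The minimality step uses $b \geq 1$ essentially, as a $(-1)$-sphere covering a $\P^1$ base is not immediately excluded; with our hypothesis this is automatic. The remaining steps are a direct computation and a quick appeal to the surface classification.
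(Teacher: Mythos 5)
Your proof is correct, but it takes a genuinely different route from the paper's. For minimality, the paper simply cites a theorem of Stipsicz that a relatively minimal Lefschetz fibration (over a positive-genus base) has minimal total space, whereas you argue directly: a $(-1)$-curve cannot lie in a fiber by relative minimality, and cannot dominate $\overline{B}$ since a nonconstant map $\P^1 \to \overline{B}$ is impossible when $b \geq 1$. This is a clean, self-contained replacement, and it makes transparent exactly where $b \geq 1$ enters (consistent with the paper's later caveat about base genus $0$). For the Kodaira dimension, the paper first applies Iitaka's subadditivity $\kappa(\Xt) \geq \kappa(X) + \kappa(\overline{B}) \geq 1$ and then uses \autoref{lem:c1_squared} only to exclude $\kappa = 1$; you instead run the Enriques--Kodaira classification directly, using the same $c_1^2 > 0$ computation to exclude $\kappa \in \{0,1\}$ and ruled surfaces over positive-genus bases, and using the surjection $\pi_1(\Xt) \twoheadrightarrow \pi_1(\overline{B})$ from \autoref{prop:pi_1_total_space} to get $b_1(\Xt) \geq 2b \geq 2$ and so exclude rational surfaces. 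The paper's route buys brevity at the cost of invoking Iitaka's theorem; yours trades that for the classification plus one extra (easy) non-rationality step. Two small points worth making explicit if you write this up: the positivity of $-6\kappa_\mu\chi(B)$ uses that $B$ has at least one cusp (Veech groups are never cocompact), which you do note and which the paper's bound $c_1^2 \geq 6\kappa_\mu|\Delta|$ also needs; and your appeal to the classification of minimal surfaces with $\kappa = -\infty$ as ``rational or ruled over a positive-genus curve'' is for Kähler/projective surfaces --- harmless here since $\Xt$ is projective (and in any case minimal class VII surfaces also satisfy $c_1^2 \leq 0$).
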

\begin{proof}
    Recall from \autoref{subsec:the_general_construction} that $\Xt \rightarrow \overline{B}$ is relatively minimal, i.e.\ no fiber contains a sphere of self-intersection $-1$.
    By \cite[Thm~1.4]{Stipsicz}, it follows that $\Xt$ is minimal.
    Because $\kappa(X) = 1$ and $\kappa(\overline{B}) \ge 0$, we apply Iitaka's theorem (see \cite[III, Theorem 18.4]{Barth}) to find
    \begin{equation*}
        \kappa(\Xt) \ge \kappa(X) + \kappa(\overline{B}) \ge 1.
    \end{equation*}
    We must rule out that $\kappa(\Xt) = 1$, i.e.\ that $\Xt$ is a proper elliptic surface.
    Since $\Xt$ is a minimal complex surface such that $1 \le \kappa(\Xt) \le 2$, we have $c_1^2(\Xt) \ge 0$ with equality if and only if $\kappa(\Xt) = 1$.
    Using our formula for $c_1^2(\Xt)$ in \autoref{lem:c1_squared}, we find
    \begin{align*}
        c_1^2(\Xt) = 6 \kappa_\mu (2b - 2 + \v \Delta \v) + 8(g-1)(b-1) \ge 6 \kappa_\mu \v \Delta\v > 0,
    \end{align*}
    where $\Delta$ is the set of cusps of $B$, and the result follows.
\end{proof}

\subsubsection{Fiber genus one}
An important case not included in \autoref{thm:kodaira-dimension-genus-high} is when when the fiber $X$ has genus $g=1$.
As discussed in \autoref{subsec:intro--elliptic-fibrations}, the surface $\Xt$ in this case is an elliptic modular surface.

Recall that in this case, the level $m$ congruence cover $B_m \rightarrow C$ corresponds to the principal congruence subgroup $\Gamma(m) \subseteq \SL(2,\Z)$.
The base $B_m$ will have genus $b \geq 2$ if $m \geq 7$, genus $b=1$ if $m = 6$, and genus $b=0$ if $m \in \{3,4,5\}$.
A computation of geometric genus shows that $\kappa(\Xt_m) = 1$ for $m \geq 5$.
By \cite[\S 5]{Sebbar_Besrour_2021}, $\Xt_4$ is an elliptic semistable K3 surface with $\kappa(\Xt_4) = 0$ and $\Xt_3$ is a rational semistable Beauville surface with $\kappa(\Xt_3) = -\infty$.
See \cite[\S 5]{Sebbar_Besrour_2021} for more examples with low Kodaira dimension.

\subsubsection{Base genus 0}
Another omission from \autoref{thm:kodaira-dimension-genus-high} is the case of base genus $b=0$.
In these cases, we cannot immediately conclude that the total space $\Xt$ is minimal.
Apart from the $g=1$ cases, an example with $b=0$ is the level 3 congruence fibration of the double pentagon surface.
We give a direct proof of minimality for this example in \autoref{rem:double_pentagon}. 

\subsection{BMY inequality}\label{subsec:BMY}
\subsubsection{Sufficient condition for strict inequality}
As mentioned above, the BMY inequality says that for a minimal general type surface, the Chern numbers satisfy $c_1^2 \leq 3c_2$, or equivalently the Euler characteristic and signature satisfy $\sigma \leq \frac{1}{3}e$ (see e.g.\ \cite[Chapter VII, Theorem 4.1]{Barth}).
Moreover, if the inequality is an equality, then the surface is a ball quotient.
There are few known families of surfaces for which the equality holds; we naturally ask:

\begin{question}
    Are there any examples of Veech fibrations $\Xt \to \overline{B}$ for which $c_1^2(\Xt) = 3c_2(\Xt)$?  
\end{question}

Note that by \autoref{lem:c1_squared} and \autoref{prop:euler_char}, we may write the BMY inequality for a Veech fibration $\Xt \rightarrow \overline{B}$ as
\begin{equation*}
    -6 \kappa_{\mu} \chi(B) + 8(g-1)(b-1) \le 3\left(T + 4(g-1)(b-1)\right),
\end{equation*}
or equivalently,
\begin{equation*}    
    -6 \kappa_{\mu} \chi(B) \le 3T + 4(g-1)(b-1).
\end{equation*}

We have the following sufficient condition to show that the BMY inequality is strict.

\begin{proposition}\label{prop:BMY}
    Suppose $B$ has genus $b \geq 1$.
    The BMY inequality for $\Xt$ is strict if
    \begin{equation*}
        \frac{g-1}{2}\v \Delta \v < T,
    \end{equation*}
    where $g$ is the genus of $X$, $\Delta$ is the set of cusps on $B$, and $T$ is the total twisting.
\end{proposition}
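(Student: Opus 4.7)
The plan is to unpack the strict BMY inequality $c_1^2(\Xt) < 3c_2(\Xt)$ using \autoref{lem:c1_squared} and \autoref{prop:euler_char} together with $\chi(B) = 2 - 2b - \v \Delta \v$, and then reduce the problem to estimating the single quantity $\kappa_\mu$. After substitution and straightforward algebra, the strict BMY inequality becomes
\[
    12 \kappa_\mu (b-1) + 6 \kappa_\mu \v \Delta \v < 3T + 4(g-1)(b-1).
\]

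The central estimate is the bound $12 \kappa_\mu \leq 3(g-1)$, which depends only on $g$. I would prove this by rewriting
\[
    12 \kappa_\mu = \sum_{i=1}^\ell \frac{m_i(m_i+2)}{m_i+1} = \sum_{i=1}^\ell \left( m_i + \frac{m_i}{m_i+1} \right) = (2g - 2) + \sum_{i=1}^\ell \frac{m_i}{m_i+1},
\]
and applying the elementary inequality $m_i/(m_i+1) \leq m_i/2$ for $m_i \geq 1$. Summing yields $\sum_i m_i/(m_i+1) \leq (2g-2)/2 = g-1$, hence $12 \kappa_\mu \leq 3(g-1)$.

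Plugging this bound (and $6 \kappa_\mu \leq \tfrac{3(g-1)}{2}$) into the left-hand side shows that it suffices to verify
\[
    3(g-1)(b-1) + \tfrac{3(g-1)}{2} \v \Delta \v < 3T + 4(g-1)(b-1),
\]
which rearranges to
\[
    \tfrac{g-1}{2} \v \Delta \v < T + \tfrac{(g-1)(b-1)}{3}.
\]
Since $b \geq 1$ and $g \geq 2$ (the $g = 1$ case being trivial), the second summand on the right is nonnegative, so the hypothesis $\tfrac{g-1}{2} \v \Delta \v < T$ immediately closes the argument.

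There is no substantial obstacle here: the proof is a direct algebraic reduction plus the elementary bound on $\kappa_\mu$. The only thing to watch is that strictness is preserved throughout — the bound on $\kappa_\mu$ is only sharp when every zero of $\omega$ is simple, but even then the final inequality is strict by the hypothesis, since we are adding a nonnegative quantity to its right-hand side.
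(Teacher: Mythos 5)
Your proof is correct and follows the same overall route as the paper: rewrite the strict BMY inequality via \autoref{lem:c1_squared} and \autoref{prop:euler_char} as $-6\kappa_\mu\chi(B) < 3T + 4(g-1)(b-1)$, bound $12\kappa_\mu \le 3(g-1)$, and absorb the leftover term using $(g-1)(b-1) \ge 0$ (which is where the hypothesis $b \ge 1$ enters, both there and in multiplying the $\kappa_\mu$-bound by $b-1 \ge 0$). The one genuine difference is how you establish the central estimate on $\kappa_\mu$: the paper proves \autoref{lem:kappa_mu_bound} by writing $12\kappa_\mu = (2g-2) + \v\mu\v - \sum (m_i+1)^{-1}$ and running an induction over partitions, showing that splitting off a part strictly increases $f(\mu) = \v\mu\v - \sum(m_i+1)^{-1}$, so the maximum is at $\mu = (1,\dots,1)$. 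Your termwise inequality $m_i/(m_i+1) \le m_i/2$ gives the same bound in one line, and it also recovers the equality case (all $m_i = 1$) immediately, so it is a cleaner proof of the same lemma; the paper's version buys nothing extra here beyond making the extremal partition explicit through the splitting argument. Your handling of strictness and of the degenerate $g=1$ case is fine, since the hypothesis forces $T>0$ there.
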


To prove \autoref{prop:BMY}, we begin with a quick lemma.
Recall that the constant $\kappa_\mu$ is defined as
\begin{equation*}
    \kappa_\mu = \frac{1}{12}\sum_{i=1}^\ell\frac{m_i(m_i+2)}{m_i+1},
\end{equation*}
where $\mu = (m_1, \ldots, m_\ell)$ is the partition of $2g-2$ corresponding to the zeros of $\omega$.
We first bound this parameter from above.

\begin{lemma}\label{lem:kappa_mu_bound}
    For all partitions $\mu$ of $2g - 2$, we have $12\kappa_\mu \le 3g - 3$, with equality if and only if $\mu = (1, 1, \ldots, 1)$.
\end{lemma}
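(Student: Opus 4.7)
The plan is a direct term-by-term estimate. The key algebraic observation is that
\begin{equation*}
\frac{m(m+2)}{m+1} \;=\; \frac{(m+1)^2 - 1}{m+1} \;=\; (m+1) - \frac{1}{m+1},
\end{equation*}
which lets us convert the sum defining $12\kappa_\mu$ into something that interacts cleanly with the constraint $\sum_i m_i = 2g-2$.

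The first step is to apply this identity to each summand: writing $\ell$ for the number of parts,
\begin{equation*}
12\kappa_\mu \;=\; \sum_{i=1}^\ell \frac{m_i(m_i+2)}{m_i+1} \;=\; \sum_{i=1}^\ell (m_i + 1) - \sum_{i=1}^\ell \frac{1}{m_i+1} \;=\; (2g-2) + \ell - \sum_{i=1}^\ell \frac{1}{m_i+1}.
\end{equation*}
So the desired inequality $12\kappa_\mu \le 3g-3$ becomes
\begin{equation*}
\ell - \sum_{i=1}^\ell \frac{1}{m_i+1} \;\le\; g-1 \;=\; \tfrac12 \sum_{i=1}^\ell m_i,
\end{equation*}
or, grouping by index,
\begin{equation*}
\sum_{i=1}^\ell \left(\frac{m_i}{m_i+1} - \frac{m_i}{2}\right) \;\le\; 0.
\end{equation*}

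The second step is to verify this pointwise: for each positive integer $m_i \ge 1$ we have $m_i + 1 \ge 2$, hence $\frac{1}{m_i+1} \le \frac{1}{2}$, so $\frac{m_i}{m_i+1} \le \frac{m_i}{2}$. Summing gives the required bound. Moreover each of these pointwise inequalities is an equality exactly when $m_i+1 = 2$, i.e.\ $m_i = 1$; so the total sum is an equality if and only if every $m_i = 1$, which means $\ell = 2g-2$ and $\mu = (1,1,\ldots,1)$. There is no real obstacle here — the whole proof is a one-line algebraic manipulation followed by a trivial term-by-term estimate — so I expect a clean half-page proof with no subtleties.
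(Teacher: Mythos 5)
Your proof is correct, and after the shared opening move it diverges from the paper's argument in a useful way. Both proofs begin with the same identity $\frac{m(m+2)}{m+1} = (m+1) - \frac{1}{m+1}$, giving $12\kappa_\mu = (2g-2) + \ell - \sum_i \frac{1}{m_i+1}$. The paper then treats the problem as an optimization over partitions of $2g-2$: it shows via an exchange argument (splitting a part $m_j > 1$ into $m_j - 1$ and $1$ strictly increases $\ell - \sum_i \frac{1}{m_i+1}$) that the maximum is attained only at $\mu = (1,\ldots,1)$, and then evaluates there. You instead use the constraint $\sum_i m_i = 2g-2$ to rewrite the target $g-1$ as $\frac12\sum_i m_i$ and distribute it over the parts, reducing everything to the pointwise inequality $\frac{m_i}{m_i+1} \le \frac{m_i}{2}$, which holds for every $m_i \ge 1$ with equality exactly when $m_i = 1$. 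This eliminates the induction/exchange step entirely and makes the equality analysis immediate (the sum is an equality iff every term is), at the cost of nothing; the paper's formulation, on the other hand, makes explicit that $(1,\ldots,1)$ is the unique maximizer of $\kappa_\mu$ among all strata in genus $g$, which is the form of the statement it actually invokes later. Either way the result and the equality case are the same, and your argument is complete as written.
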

\begin{proof}
    Observe that
    \begin{align*}
        12 \kappa_\mu = \sum \frac{m_i(m_i + 2)}{m_i + 1} = \sum \frac{(m_i + 1)^2 - 1}{m_i + 1} &= \sum (m_i + 1) - \sum \frac{1}{m_i + 1}. \\
        &= (2g - 2) + |\mu| - \sum \frac{1}{m_i + 1}.
    \end{align*}
    We seek to maximize the function $f(\mu) \coloneqq |\mu| - \sum (m_i + 1)^{-1}$ over all partitions of $2g - 2$.
    
    We claim that $f(\mu)$ has a strict maximum at $\mu = (1,1, \ldots, 1)$.
    To see this, consider a partition $\mu = (m_1, \dots, m_\ell)$ having some $m_j > 1$, and construct the partition $\mu' \coloneqq (m_1, \dots, m_j - 1, \dots, m_\ell, 1)$.
    Then we compute
    \begin{align*}
        f(\mu') - f(\mu) &= (\ell + 1) - \ell - \left(\frac{1}{(m_j - 1) + 1} + \frac{1}{1 + 1}\right) + \left(\frac{1}{m_j + 1}\right)\\
        &= \frac{1}{2} - \frac{1}{m_j} + \frac{1}{m_j+1}\\
        &\ge \frac{1}{m_j+1}\\
        &> 0,
    \end{align*}
    where we used that $m_j \ge 2$ in the third line.
    The claim follows inductively.

    When $\mu = (1,1, \ldots, 1)$, we see that
    \begin{align*}
        12\kappa_\mu = \sum_{i = 1}^{2g - 2} \frac{1(1 + 2)}{(1 + 1)} = \frac{3}{2}(2g-2) = 3g - 3,
    \end{align*}
    giving the claimed bound on $12 \kappa_{\mu}$.
\end{proof}

Now, we can prove \autoref{prop:BMY}.

\begin{proof}[Proof of \autoref{prop:BMY}]
    Recall from above that we can write the BMY inequality as
    \begin{equation*}
        -6 \kappa_{\mu} \chi(B) \le 3T + 4(g-1)(b-1).
    \end{equation*}
    By \autoref{lem:kappa_mu_bound}, we have that
    \begin{align*}
        -6\kappa_\mu\chi(B)
        &\leq -\frac{1}{2}(3g-3)\chi(B) \\
        &= \frac{3}{2}(g-1)(2b-2+\v \Delta \v) \\
        &= 3(g-1)(b-1) + \frac{3}{2}(g-1)\v \Delta \v.
    \end{align*}
    The BMY inequality will be strict if
    \begin{equation*}
        3(g-1)(b-1) + \frac{3}{2}(g-1)\v \Delta \v < 3T + 4(g-1)(b-1),
    \end{equation*}
    or equivalently if
    \begin{equation*}
        \frac{3}{2}(g-1)\v \Delta \v < 3T + (g-1)(b-1).
    \end{equation*}
    By assumption, we have that $(g-1)(b-1) \geq 0$, so from here the claim follows.
\end{proof}

\begin{remark}\label{rem:Jeremy's_remark}
    Jeremy Kahn has described to us an alternative sufficient condition: if $\pi_1(\Xt) \isom \pi_1(\overline{B})$, then the BMY inequality is strict.
    The idea is as follows.
    Suppose we had $c_1^2(\Xt) = 3 c_2(\Xt)$, so that $\Xt \cong \C\H^2 / \Gamma$ for some discrete subgroup $\Gamma < \PU(2, 1)$.
    As $\Xt$ is compact, the fundamental theorem of geometric group theory implies that $\Gamma$ is quasi-isometric to $\C\H^2$.
    By assumption $\Gamma \isom \pi_1(\overline{B})$ is a surface group of a compact surface, so that $\Gamma$ is quasi-isometric to $\H^2$ by the same reasoning.
    But it is well known that $\H^2$ is not quasi-isometric to $\C\H^2$, as they are inequivalent symmetric spaces.
\end{remark}

\subsubsection{Strict inequality for congruence fibrations}

We next apply \autoref{prop:BMY} to congruence Veech fibrations.

\begin{proposition}\label{prop:BMY_suff_cond}
    Fix a Veech surface $(X,\omega)$, and let $\Xt_m \rightarrow \overline{B}_m$ be its level $m$ congruence Veech fibration.
    Then, for $m$ sufficiently large, the BMY inequality for $\Xt_m$ is strict.
\end{proposition}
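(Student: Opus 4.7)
The plan is to apply \autoref{prop:BMY} to the fibration $\Xt_m \to \overline{B}_m$. The two hypotheses to verify are that $B_m$ has genus $b_m \geq 1$ and that $\frac{g-1}{2}|\Delta_m| < T_m$, where $\Delta_m$ denotes the cusp set of $B_m$ and $T_m$ the total twisting. Both will follow from analyzing how the degree of the cover $B_m \to \widehat{C}$ compares to the number of cusps and to the total twisting.

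Using the Galois cover $B_m \to \widehat{C} = \H/\PSL(X,\omega)$ with deck group $G = G_m$ described in \autoref{subsec:topology-of-base}, let $d_m = |G_m|$. For each cusp $c$ of $\widehat{C}$ with cusp generator $C_c \in \PSL(X,\omega)$, let $N_c = N_c(m)$ denote the order of $C_c$ in $G_m$. Then the cover $B_m \to \widehat{C}$ has exactly $d_m/N_c$ cusps over $c$, each with ramification index $N_c$, so that
\begin{equation*}
    |\Delta_m| \;=\; \sum_{c} \frac{d_m}{N_c}.
\end{equation*}
The first key observation is that $N_c \to \infty$ as $m \to \infty$: the image of $C_c$ in $\Sp(H_1(X;\Z/m\Z))$ is (up to a sign) a unipotent element whose order grows with $m$. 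Hence $|\Delta_m|/d_m \to 0$ as $m \to \infty$.

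The second observation gives the scaling of $T_m$. The monodromy at a cusp $c'$ of $B_m$ lying over a cusp $c$ of $\widehat{C}$ is the $N_c$-th power of the monodromy at $c$ (precisely because $N_c$ is the ramification), and hence is an $N_c k_i^c$-fold Dehn twist about the $i$-th core curve of the cylinder decomposition at $c$. Summing over the $d_m/N_c$ cusps above $c$ gives contribution $d_m \sum_i k_i^c$, so
\begin{equation*}
    T_m \;=\; d_m \cdot T', \qquad T' \coloneqq \sum_{c} \sum_{i=1}^{\ell_c} k_i^c,
\end{equation*}
a fixed positive constant depending only on $C$. (This is the observation invoked in the remark after \autoref{prop:signature_formula}.) Combining the two, the inequality $\frac{g-1}{2}|\Delta_m| < T_m$ becomes
\begin{equation*}
    \frac{g-1}{2}\sum_{c} \frac{1}{N_c} \;<\; T',
\end{equation*}
which holds for all sufficiently large $m$ since the left-hand side tends to $0$.

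It remains to check $b_m \geq 1$ for large $m$. Since $\widehat{C}$ is a hyperbolic orbifold, $\chi^{\mathrm{orb}}(\widehat{C}) < 0$, and since $B_m$ is a manifold, $\chi(B_m) = d_m \chi^{\mathrm{orb}}(\widehat{C}) \to -\infty$; equivalently $2 - 2b_m - |\Delta_m| \to -\infty$, and the above sublinear growth of $|\Delta_m|$ forces $b_m \to \infty$. (One also has to know that the monodromy at each cusp of $B_m$ is unipotent, but for $m$ sufficiently large this follows from the description of cusp generators in \autoref{subsec:teichmuller_curves} together with $N_c \to \infty$.) Both hypotheses of \autoref{prop:BMY} are therefore satisfied for large $m$, and the strict BMY inequality for $\Xt_m$ follows. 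The only delicate point, and the one on which the whole argument hinges, is the precise accounting that $T_m$ scales exactly like $d_m$ while $|\Delta_m|$ scales like $d_m / N_c$; everything else is a consequence of $N_c(m) \to \infty$.
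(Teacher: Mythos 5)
Your proof is correct and takes essentially the same route as the paper's: both reduce to the criterion of \autoref{prop:BMY} and show that $\v\Delta_m\v = \sum_c d_m/N_c$ grows sublinearly in the covering degree because the order $N_c$ of each cusp generator modulo $m$ tends to infinity, while $T_m = d_m T'$ scales exactly linearly, so the inequality holds for large $m$. Your explicit check that the genus of $B_m$ is eventually at least $1$ (a hypothesis of \autoref{prop:BMY} the paper's proof leaves implicit) is a nice addition; the only caveat is that when a cusp generator of $\widehat{C}$ is a fractional multitwist (as for the Eierlegende Wollmilchsau), your $k_i^c$ must be read as possibly fractional twist amounts, which is exactly the role of the factor $T_c/k_c$ in the paper's accounting.
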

\begin{proof}
    Let $\Delta_m$ denote the set of cusps on $B_m$, and let $T_m$ denote the total twisting on $B_m$.
    By \autoref{prop:BMY}, it's enough to show that
    \begin{equation*}
        \frac{g-1}{2}\v \Delta_m \v < T_m.
    \end{equation*}
    for sufficiently large $m$.
    
    Let $\Delta_C$ denote the set of cusps on the hyperbolic orbifold $\widehat{C}$.
    For each $c \in \Delta_c$, let $\phi_c$ denote the associated minimal multitwist (as defined in \autoref{subsec:Veech_surfaces}).
    By the formula for a multitwist acting on homology (see \cite[Prop~6.3]{farb_margalit}), the order of $\rho_m(\phi_c) \in \Sp(H_1(X;\Z/m\Z))$ grows with $m$.

    Now for each $c \in \Delta_C$, let $A_c \in \PSL(X,\omega) = \pi_1^{orb}(\widehat{C})$ denote the cusp generator.
    Let $\psi_m:\PSL(X,\omega) \rightarrow G_m$ be the monodromy homomorphism of the congruence cover $B_m \rightarrow \widehat{C}$ (as described in \autoref{subsec:congruence_Veech_fibrations}).
    Since the order of $\rho_m(\phi_c)$ grows with $m$, it follows that the order of $\psi_m(A_c) \in G_m$ grows with $m$.

    Let $d$ denote the degree of the congruence cover $B_m \rightarrow \widehat{C}$.
    We see that the number of cusps on $B_m$ lying above $c \in \Delta_c$ is $\frac{d}{\textrm{ord}(\psi_m(A_c))}$, where $\textrm{ord}(\psi_m(A_c))$ denotes the order of $\psi_m(A_c)$ in $G_m$.
    Therefore,
    \begin{equation*}
        \v \Delta_m \v = \sum_{c \in \Delta_C} \frac{d}{\textrm{ord}(\psi_m(A_c))}.
    \end{equation*}
    On the other hand, for each $c \in \Delta_C$, let $T_c$ denote the total number of twists in the minimal multitwist $\psi_m$.
    Each cusp generator $A_c$ corresponds to a $k_c$th root of the multitwist $\phi_c$ for some $k_c \in \Z_{\geq 1}$.
    It follows that the monodromy around each cusp lying over $c$ will be a multitwist comprised of $\textrm{ord}(\psi_m(A_c))\frac{T_c}{k_c}$ twists.
    In other words, we have that
    \begin{align*}
        T_m &= \sum_{c \in \Delta_C} \sum_{\text{cusps over $c$}} \textrm{ord}(\psi_m(A_c))\frac{T_c}{k_c} \\
        &= \sum_{c \in \Delta_C} \left(\frac{d}{\textrm{ord}(\psi_m(A_c))}\right)\textrm{ord}(\psi_m(A_c))\frac{T_c}{k_c} \\
        &= \sum_{c \in \Delta_C} d\frac{T_c}{k_c}.
    \end{align*}

    We see that $\v \Delta_m \v /d$ decreases with $m$, while $T_m/d$ is constant in $m$.
    The desired inequality $\frac{g-1}{2}\v \Delta_m \v < T_m$ thus holds for sufficiently large $m$.
\end{proof}

\section{Algebraically primitive Teichm\"uller curves}\label{sec:alg_prim_Tcurves}
In this section, we restrict our attention to \emph{algebraically primitive} Veech surfaces, i.e.\ Veech surfaces whose trace field $K$ has degree $g$ over $\Q$.
In particular, we apply the results of \autoref{sec:top_invariants} and \autoref{sec:geometric_invariants} to the algebraically primitive case, and explicitly compute invariants of congruence fibrations for all known algebraically primitive Veech surfaces.

In \autoref{subsection:alg_prim_background}, we recount some facts about algebraically primitive Veech surfaces, and in particular discuss real multiplication and the holonomy homomorphism.
In \autoref{subsec:consequences_for_cong_fibs}, we apply the results of \autoref{sec:top_invariants} and \autoref{sec:geometric_invariants} to obtain some general results on the invariants of Veech fibrations for algebraically primitive surfaces.
In \autoref{subsec:action-homology-char-p}, we prove \autoref{thm:action_on_homology_mod_p}, which gives a criterion to compute the degree of the congruence cover for algebraically primitive surfaces.
In \autoref{subsec:TV_construction}, we review the Thurston-Veech construction, which is used to build all known examples of algebraically primitive surfaces.
We leverage this construction to verify that all known algebraically primitive surfaces verify the criterion of \autoref{thm:action_on_homology_mod_p}.
Finally, we compute the invariants of the congruence Veech fibrations for the genus 2 Weierstrass eigenforms, the algebraically primitive regular polygon surfaces, and the sporadic examples in \autoref{subsec:l_tables}, \autoref{subsec:b-m}, and \autoref{subsec:sporadic} respectively.

\subsection{Homology of algebraically primitive Veech surfaces}\label{subsection:alg_prim_background}

\subsubsection{Algebraic and geometric primitivity}
A genus $g$ Veech surface $(X,\omega)$ is called \emph{algebraically primitive} if its trace field $K$ has degree $g$ over $\Q$.
Any algebraically primitive Veech surface is \emph{geometrically primitive}, meaning it is not a translation cover of another Veech surface; the converse is not true in general.
It follows that for $(X,\omega)$ algebraically primitive, the automorphism group $\Aut(X,\omega)$ is trivial (otherwise $X \rightarrow X/\Aut(X,\omega)$ would be a translation cover) and the derivative map $D:\Aff^+(X,\omega) \rightarrow \SL(X,\omega)$ is an isomorphism.
We may then freely identify an affine automorphisms with its linear part.

\subsubsection{Real multiplication}
As explained in \autoref{subsec:action-on-homology}, if $(X,\omega)$ is algebraically primitive, there is a splitting
\begin{equation}\label{eqn:splitting_VHS}
    H_1(X;\R) \cong \bigoplus_{i=1}^g H_1^{st}(X)^{\sigma_i}
\end{equation}
where $\sigma_1, \ldots, \sigma_g$ are the distinct embeddings $K \hookrightarrow \R$.
(Since this splitting is not defined over $\Q$, it generally does not descend to a splitting of $H_1(X;\Z/m\Z)$.)
It follows that the Jacobian of $X$ admits \emph{real multiplication} by the trace field $K$.
For more details on real multiplication, see, for example, \cite{McMullen_Billiards_Tcurves_Hilbert_modular_surfaces} or \cite{McMullen_survey_2022}.
For our purposes, we only need the following formalism:

\begin{lemma}\label{lem:real_multiplication}
    Let $(X,\omega)$ be an algebraically primitive Veech surface of genus $g$, and suppose that $\alpha = \sum_{i=1}^k c_i\tr(D\phi_i)$ for some $c_i \in \Z$ and $\phi_i \in \Aff^+(X,\omega)$.
    Then there exists a ring homomorphism
    \begin{align*}
        \Z[\alpha] &\rightarrow \End(H_1(X;\Z))
    \end{align*}
    sending $\alpha$ to an automorphism $M_\alpha \in \GL(H_1(X;\Z))$ whose dual satisfies $(M_\alpha)^*\omega = \alpha\omega$.
\end{lemma}
\begin{proof}
    We define the endomorphism
    \begin{equation*}
        M_\alpha \coloneqq \sum_{i=1}^k c_i\left((\phi_i)_* + (\phi_i^{-1})_*\right) \in \End(H_1(X;\Z)),
    \end{equation*}
    where $(-)_*$ denotes the induced action of $\Aff^+(X, \omega)$ on $H_1(X;\Z)$.
    It suffices to show that the assignment $\alpha \mapsto M_\alpha$ is well-defined, that is, to show $m_\alpha(M_\alpha) = 0$ where $m_\alpha(x)$ is the minimal polynomial of $\alpha$.
    
    In fact, following \cite[$\S$4]{McMullen_survey_2022} we will show that $M_\alpha$ is diagonalizable and the eigenvalues of $M_\alpha$ are the Galois conjugates of $\alpha$, i.e., the roots of $m_\alpha(x)$.
    It suffices to show this for the dual endomorphism
    \begin{equation*}
        M^\alpha \coloneqq \sum_{i=1}^k c_i\left((\phi_i)^* + (\phi_i^{-1})^*\right) \in \End(H^1(X;\Z))
    \end{equation*}
    where $(-)^*$ denotes the induced action on $H^1(X;\Z)$.
    
    Recall that for any $A = D\phi \in \SL(X,\omega)$, we have
    \begin{equation*}
        \phi^*(\omega) =
        \begin{pmatrix}
            1 & i
        \end{pmatrix}
        A
        \begin{pmatrix}
            \Re(\omega) \\ \Im(\omega)
        \end{pmatrix}.
    \end{equation*}
    Since
    \begin{equation*}
        A_i + A_i^{-1} = 
        \begin{pmatrix}
            \tr(A_i) & 0 \\ 0 & \tr(A_i)
        \end{pmatrix},
    \end{equation*}
    it follows that
    \begin{equation*}
        \left((\phi_i)^* + (\phi_i^{-1})^*\right)\omega = \tr(A_i)\omega,
    \end{equation*}
    and thus 
    $M^\alpha\omega = \alpha\omega$.
    
    Moreover, for any Galois conjugate element $\sigma(\omega) \in H^1(X;\R)$, we get that $M^\alpha\sigma(\omega) = \sigma(\alpha)\sigma(\omega)$.
    Thus we conclude that $M^\alpha$ has the claimed eigenvalues, and so our map $\Z[\alpha] \rightarrow \End(H_1(X;\Z))$ is well defined.
\end{proof}

\subsubsection{The holonomy homomorphism}
The second key component in our study of algebraically primitive surfaces is the \emph{holonomy homomorphism}
\begin{align*}
    \hol:H_1(X;\Z) &\rightarrow \R^2, \qquad
    \gamma \mapsto \int_\gamma \omega.
\end{align*}

The holonomy map satisfies two important equivariance properties:
\begin{enumerate}
    \item The map $\hol$ is $\Aff^+(X,\omega)$-equivariant, where $\Aff^+(X,\omega)$ acts on $\R^2$ via $\SL(X,\omega)$.
    That is, for any $\gamma \in H_1(X;\Z)$ and $\phi \in \Aff^+(X,\omega)$ we have
    \begin{equation*}
        \hol(\phi_*\gamma)
        = \int_{\phi_*\gamma} \omega
        = \int_\gamma \phi^*\omega
        = D\phi \left(\int_ \gamma\omega \right)
        = D\phi (\hol(\gamma)).
    \end{equation*}
    \item The map $\hol$ is $\Z[\alpha]$-equivariant with respect to the $M_\alpha$-action in \autoref{lem:real_multiplication}.
    That is, for any $\gamma \in H_1(X;\Z)$ we have
    \begin{equation*}\label{eqn:alpha_equivariance}
        \hol(M_\alpha\gamma) = \int_{M_\alpha\gamma}\omega
        = \int_{\gamma} (M_\alpha)^*\omega
        = \int_\gamma \alpha\omega
        = \alpha\int_\gamma \omega
        = \alpha\hol(\gamma).
    \end{equation*}
\end{enumerate}

Together these give:
\begin{lemma}\label{lem:hol_inj_and_VG_commutes_RM}
    Suppose $(X,\omega)$ is an algebraically primitive Veech surface.
    Then,
    \begin{enumerate}[label=(\roman*)]
        \item the holonomy homomorphism is injective, and
        \item the action of $\Aff^+(X,\omega)$ on $H_1(X;\Z)$ commutes with the automorphism $M_\alpha$ defined in \autoref{lem:real_multiplication}.
    \end{enumerate}
\end{lemma}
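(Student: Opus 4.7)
The plan is to handle the two parts of the lemma in sequence, using the characterizations of algebraic primitivity and the equivariance properties already established in the excerpt.

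For part (i), I would identify $\ker(\hol)$ with the integral version of the symplectic complement subspace $H_1^{(0)}(X)$ appearing in the splitting (\ref{eqn:splitting}). Since $(X,\omega)$ is algebraically primitive, the splitting reads
\begin{equation*}
    H_1(X;\R) = \bigoplus_{i=1}^g H_1^{st}(X)^{\sigma_i},
\end{equation*}
so the complement $H_1^{(0)}(X;\R)$ vanishes. Thus $\ker(\hol) \otimes_\Z \R = 0$. Since $H_1(X;\Z)$ is a finitely generated free abelian group, any subgroup is torsion-free, and a torsion-free abelian group that vanishes after tensoring with $\R$ must itself be zero. Hence $\ker(\hol) = 0$, giving injectivity.

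For part (ii), I would use injectivity from (i) to reduce checking the identity $\phi_* \circ M_\alpha = M_\alpha \circ \phi_*$ to checking that it holds after applying $\hol$. Given any $\gamma \in H_1(X;\Z)$, the two equivariance properties listed just before the lemma give
\begin{equation*}
    \hol(\phi_* M_\alpha \gamma) = D\phi \cdot \hol(M_\alpha \gamma) = D\phi \cdot (\alpha \cdot \hol(\gamma))
\end{equation*}
and
\begin{equation*}
    \hol(M_\alpha \phi_* \gamma) = \alpha \cdot \hol(\phi_* \gamma) = \alpha \cdot (D\phi \cdot \hol(\gamma)).
\end{equation*}
Since $\alpha \in \R$ acts on $\R^2$ by scalar multiplication, it commutes with the linear action of $D\phi \in \SL(2,\R)$, so the two right-hand sides agree. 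By injectivity of $\hol$, the operators $\phi_* \circ M_\alpha$ and $M_\alpha \circ \phi_*$ coincide on all of $H_1(X;\Z)$.

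Neither step presents a real obstacle: the content of (i) is simply that algebraic primitivity forces the ``non-tautological'' part of homology to be trivial, and (ii) reduces by injectivity to the trivial observation that real scalar multiplication commutes with real linear maps. The only subtlety worth flagging is the passage from vanishing over $\R$ to vanishing over $\Z$ in part (i), which is immediate from freeness of $H_1(X;\Z)$ but should be stated explicitly.
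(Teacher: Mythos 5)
Your part (ii) is essentially the paper's argument verbatim: apply $\hol$ to both composites, use the $\SL(X,\omega)$- and $\Z[\alpha]$-equivariance together with the fact that the scalar $\alpha$ commutes with $D\phi$, and conclude by injectivity from (i). Part (i), however, takes a genuinely different route. The paper proves injectivity directly with real multiplication: it picks a degree-$g$ algebraic integer $\beta \in \O_K$ and two classes $\gamma,\delta$ with non-parallel holonomy, and observes that the $2g$ vectors $\beta^i\hol(\gamma), \beta^i\hol(\delta)$, $0 \le i \le g-1$, are $\Z$-independent, so $\Im(\hol)$ has rank $2g$ and the kernel is trivial. You instead invoke the assertion from \autoref{subsec:action-on-homology} that $H_1^{(0)}(X)$ can be taken to be (the real span of) $\ker(\hol)$, note that algebraic primitivity forces $H_1^{(0)}(X)=0$, and deduce $\ker(\hol)=0$ via freeness of $H_1(X;\Z)$; the final descent from vanishing real span to vanishing subgroup is handled correctly. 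This is logically sound given that background statement, but be aware of what it costs: that sentence is an informal aside whose rigorous content (that the \emph{integral} kernel of $\hol$ spans exactly the symplectic complement, not merely sits inside the much larger real kernel of the extended map) rests on M\"oller's splitting of the variation of Hodge structures and its rational structure, and in the algebraically primitive case its verification is essentially the same real-multiplication computation the paper carries out explicitly. So your proof is shorter but outsources the substance to a cited background fact, while the paper's proof is self-contained modulo \autoref{lem:real_multiplication}; if you keep your version, you should cite the background identification precisely and note that it is the span of the integral kernel that is being identified with $H_1^{(0)}(X)$.
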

\begin{proof}
    \textbf{(i)} We show that the image of $\hol$ has rank $2g$, the rank of $H_1(X ; \Z)$, from which the claim follows.
    By the Primitive Element Theorem, there exists $\beta \in K$ such that $K = \Q(\beta)$.
    Since $\beta \in K$, we know that $\beta = \sum_{i=1}^k q_i \tr(A_i)$ for $q_i \in \Q$ and $A_i \in \SL(X,\omega)$.
    Up to scaling $\beta$, we may assume that $q_i \in \Z$, so $\beta$ satisfies the hypothesis of \autoref{lem:real_multiplication}.
    Since $(X,\omega)$ is algebraically primitive, we know that $K$ has degree $g$, and hence $\beta$ has degree $g$.
    Let $\gamma, \delta \in H_1(X;\Z)$ be two homology classes such that $\hol(\gamma)$ and $\hol(\delta)$ are not parallel.
    (For instance, take $\gamma$ and $\delta$ to be core curves of cylinders from transverse cylinder decompositions).
    Since $\beta$ has degree $g$, the set of $2g$ holonomy vectors
    \begin{equation*}
        \bigcup_{i = 0}^{g-1} \{\hol(M_\beta^i\gamma),\hol(M_\beta^i\delta)\}
        = \bigcup_{i=0}^{g-1} \{\beta^i \hol(\gamma), \beta^i\hol(\delta)\}
    \end{equation*}
    is linearly independent over $\Z$.

    \textbf{(ii)} We want to show that, for any $\phi \in \Aff^+(X,\omega)$ and any $\gamma \in H_1(X;\Z)$, we have $\phi_*M_\alpha\gamma = M_\alpha \phi_*\gamma$.
    Take any $\alpha$ as in \autoref{lem:real_multiplication}.
    We have
    \begin{equation*}
        \hol(\phi_*M_\alpha\gamma)
        = D\phi (\alpha \hol(\gamma))
        = \alpha D\phi(\hol(\gamma))
        = \hol(M_\alpha \phi_*\gamma),
    \end{equation*}
    and the result now follows from $\textbf{(i)}$.
\end{proof}

\subsubsection{Cylinder bound}

Finally, we record a special case of a theorem of M\"oller \cite[Thm~2.1]{Moller_finiteness_results} on the genus of the stable curves associated to the cusps of a Teichm\"uller curve $C$.
In particular, it gives bounds on the number of cylinders in each cylinder decomposition of an algebraically primitive Veech surface \cite[Cor~2.2]{Moller_finiteness_results}.

\begin{theorem}[M\"oller]\label{thm:Moller_cyl_bound}
    Let $(X,\omega) \in \Omega \mathcal{M}_g(m_1, \dots, m_s)$ be a genus $g$ algebraically primitive Veech surface with Teichm\"uller curve $C$.
    Let $F$ be the stable curve associated to a cusp of $C$.
    Then, each component of $F$ has genus 0.
    In particular, every cylinder decomposition of $(X,\omega)$ has at least $g$ and at most $g + s - 1$ cylinders.
\end{theorem}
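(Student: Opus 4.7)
The plan is to combine a Hodge-theoretic argument with combinatorics of the stable curve. The key input is that the cohomological splitting dual to \autoref{eqn:splitting} decomposes the VHS over $C$ into $g$ two-dimensional pieces by algebraic primitivity. I will show that the unipotent monodromy at each cusp acts nontrivially on every piece, which forces the weight-one graded part of the limit mixed Hodge structure to vanish and yields the rationality of each component of $F$.

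Fix a cusp of $C$ and let $F$ be the corresponding stable curve with components $E_1, \dots, E_r$ of genera $g_1, \dots, g_r$. After passing to a finite-index subgroup of $\Aff^+(X,\omega)$, the monodromy $T$ around the cusp is unipotent on $H^1(X;\Z)$, and we set $N = \log T$. By the Clemens--Schmid sequence for a degeneration of smooth curves to $F$, the weight-one graded piece of the limit mixed Hodge structure is canonically
\[
    \mathrm{Gr}^W_1 H^1_{\mathrm{lim}}(X) \cong H^1(\widetilde F) = \bigoplus_i H^1(E_i),
\]
where $\widetilde F$ is the normalization. Hence $\dim \mathrm{Gr}^W_1 H^1_{\mathrm{lim}}(X) = 2\sum_i g_i$, and the claim that every $g_i = 0$ reduces to showing this graded piece vanishes.

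The cohomological version of \autoref{eqn:splitting} is $\Aff^+(X,\omega)$-equivariant with $g$ two-dimensional summands, and is preserved by $T$. Using \autoref{eqn:tautological_action} and its Galois conjugates, $T$ acts on the $i$-th summand by $\bigl(\begin{smallmatrix} 1 & \sigma_i(c) \\ 0 & 1 \end{smallmatrix}\bigr)$, where $\bigl(\begin{smallmatrix} 1 & c \\ 0 & 1 \end{smallmatrix}\bigr) \in \SL(X,\omega)$ is a parabolic generator of the cusp. Since the Veech group of an algebraically primitive surface is conjugate into $\SL(2,K)$ (equivalently, $c$ is a $\Q$-linear combination of cylinder moduli, which lie in $K$) and the generator is nontrivial, $c \in K^*$, so every Galois conjugate $\sigma_i(c)$ is nonzero. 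Therefore $N$ is a rank-one nilpotent on each summand, and on such a two-dimensional piece one has $\mathrm{Im}(N) = \ker(N)$ of dimension one, so $W_0 = W_1$ and $\mathrm{Gr}^W_1 = 0$. Summing over $i$ gives $\mathrm{Gr}^W_1 H^1_{\mathrm{lim}}(X) = 0$, so every component of $F$ has genus zero.

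For the cylinder count, let $k$ be the number of cylinders, equal to the number of nodes of $F$. The arithmetic genus formula $g = \sum_i g_i + (k - r + 1)$ with all $g_i = 0$ gives $r = k - g + 1$, so from $r \geq 1$ we obtain $k \geq g$. For the upper bound it suffices to show each $E_i$ carries at least one zero of $\omega$, yielding $r \leq s$. Indeed, $\omega$ extends to a nonzero meromorphic one-form on each $E_i$ with simple poles at the nodes (the residues are the bounded cylinder circumferences), and on a rational component with $n_i$ nodes the degree $-2$ of the canonical bundle on $\P^1$ forces exactly $n_i - 2$ zeros of $\omega$. Deligne--Mumford stability of $F$ requires $n_i \geq 3$ on each rational component, so each $E_i$ carries at least one zero and $r \leq s$, giving $k \leq g + s - 1$. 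The main obstacle is justifying the Hodge-theoretic machinery: Clemens--Schmid's identification of $\mathrm{Gr}^W_1$ with $H^1(\widetilde F)$, and the compatibility of the limit mixed Hodge structure with the Galois decomposition. Both are standard consequences of Schmid's nilpotent orbit theorem applied to a VHS decomposing under an algebra of self-correspondences (here, real multiplication by $K$).
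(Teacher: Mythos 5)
The paper does not prove this statement; it quotes it as M\"oller's theorem (citing his finiteness paper, Thm.~2.1 and Cor.~2.2), and your argument is essentially a correct reconstruction of M\"oller's original proof: the rank-two Galois-conjugate pieces of the VHS each carry nontrivial unipotent local monodromy (nontriviality being preserved by the embeddings $\sigma_i$ since the tautological piece is defined over $K$), so $\mathrm{Gr}^W_1$ of the limit mixed Hodge structure vanishes and all components of $F$ are rational, after which the dual-graph count and the limiting stable differential give $g \leq k \leq g+s-1$. The one step worth making explicit in the upper bound is that the degree count $n_i-2\geq 1$ alone does not bound the number of \emph{distinct} zeros by $s$; you need that $\omega$ extends to a section of the relative dualizing sheaf whose divisor has no vertical components (which your nonzero-residue observation does guarantee), so that the zero locus of the limit form is exactly the limit of the $s$ zero sections and hence meets each of the $r$ components, giving $r\leq s$.
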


\subsection{Consequences for congruence Veech fibrations}\label{subsec:consequences_for_cong_fibs}

Throughout this section, fix a genus $g$ algebraically primitive Veech surface $(X,\omega)$ with Teichm\"uller curve $C$.

\subsubsection{Unipotent monodromy}
Let $B_m \rightarrow C$ be the level $m \geq 3$ congruence cover.  
Recall from \autoref{subsec:Defining_Veech_fibrations} that we obtain a level $m$ congruence Veech fibration $\Xt_m \rightarrow B_m$ if and only if the monodromy around each cusp of $B_m$ is unipotent.
We can verify that this holds automatically when $(X,\omega)$ is algebraically primitive.

\begin{lemma}\label{lem:alg-prim-unipotent}
    Let $(X,\omega)$ be an algebraically primitive Veech surface with Teichm\"uller curve $C$, and let $B_m \rightarrow C$ be the level $m \geq 3$ congruence cover. 
    Then, the monodromy around each cusp of $B_m$ is unipotent.
\end{lemma}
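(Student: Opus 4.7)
The plan is to identify each cusp generator of $B_m$ as a power of a parabolic element whose derivative is unipotent, then use the algebraically primitive splitting of $H_1(X;\R)$ to see that all Galois conjugates of that derivative are also unipotent. This forces the action on $H_1(X;\Z)$ to be unipotent.

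First I would describe the cusp generators. By the discussion in \autoref{subsec:teichmuller_curves}, any maximal parabolic subgroup of $\SL(X,\omega)$ is, up to conjugation, of the form $\la T^a\ra$, $\la \pm T^a \ra$, or $\la -T^a \ra$, where $T^a=\left(\begin{smallmatrix}1 & a \\ 0 & 1\end{smallmatrix}\right)$. A cusp of $B_m$ above such a cusp of $C$ is generated by the smallest power of the above generator that lies in $\SL(X,\omega)[m]$. In the first two cases this is clearly some $T^{ka}$. In the third case, one uses that $(-T^a)^n = (-I)^n T^{na}$ together with the fact that $-I \not\equiv I \pmod{m}$ for $m\geq 3$ (so $-I \notin \SL(X,\omega)[m]$) to conclude that the required power of $-T^a$ is even, again yielding a matrix of the form $T^{na}$. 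In every case, the cusp generator of $B_m$ lifts to an affine automorphism $\phi \in \Aff^+(X,\omega)[m]$ whose derivative is unipotent.

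Second, I would invoke algebraic primitivity to control the action on homology. By \autoref{subsection:alg_prim_background} we have an $\Aff^+(X,\omega)$-equivariant splitting
\begin{equation*}
    H_1(X;\R) \cong \bigoplus_{i=1}^g H_1^{st}(X)^{\sigma_i},
\end{equation*}
where $\sigma_1,\dots,\sigma_g$ are the embeddings $K\hookrightarrow \R$. Tracing through \autoref{eqn:tautological_action} applied to the Galois conjugates of $\omega$ (as in the proof of \autoref{lem:real_multiplication}), the action of $\phi \in \Aff^+(X,\omega)$ on $H_1^{st}(X)^{\sigma_i}$ is governed by the Galois conjugate matrix $\sigma_i(D\phi)$.

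Finally, I would combine the two observations. Writing $D\phi = T^{ka}$ from the first step, its Galois conjugates are $\sigma_i(T^{ka}) = \left(\begin{smallmatrix}1 & \sigma_i(ka) \\ 0 & 1\end{smallmatrix}\right)$, each of which is unipotent. Hence $\phi$ acts unipotently on every summand $H_1^{st}(X)^{\sigma_i}$, and so unipotently on $H_1(X;\R)$. Since $\phi$ acts on $H_1(X;\Z)$ by an integer matrix, unipotence over $\R$ implies unipotence over $\Z$, completing the proof.

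The only potentially subtle step is the third case of the first step, where one must be careful about $-I$ modulo $m$; the rest is a clean consequence of algebraic primitivity and the descriptions of cusps already recorded in the background.
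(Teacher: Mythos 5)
Your overall strategy is the same as the paper's: identify the cusp generators of $B_m$ as powers of parabolic elements, rule out generators of the form $-T^{na}$, and then use the algebraically primitive splitting together with the tautological action (\ref{eqn:tautological_action}) to conclude that a unipotent derivative acts unipotently on $H_1(X;\Z)$. The second half of your argument is correct and matches the paper. The gap is in the first half, at the point where you rule out the ``minus'' generators. From $(-T^{a})^{n}=-T^{na}\in \SL(X,\omega)[m]$ with $n$ odd, you cannot conclude anything from the fact that $-I\notin\SL(X,\omega)[m]$: to factor off $-I$ you would need $T^{na}$ itself to lie in $\SL(X,\omega)[m]$, and it typically does not (it acts mod $m$ by a nontrivial multitwist/transvection). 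So ``$-I\not\equiv I \pmod m$, hence the required power is even'' is a non sequitur; the element whose triviality mod $m$ must be excluded is $-T^{na}$, not $-I$. The same issue is hidden in your phrase ``clearly some $T^{ka}$'' for the case $P=\la \pm T^a\ra$, where a priori the intersection with the congruence subgroup could also be generated by an element of the form $-T^{ka}$.

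The fix is exactly the mechanism you already use in the second half, applied to $-T^{na}$ itself (this is what the paper does): by the splitting and the tautological action, $\rho(-T^{na})$ is the negative of a unipotent matrix, so $\rho_m(-T^{na})=-U'$ with $U'$ the reduction of an integral unipotent. Such an element cannot be the identity in $\Sp(H_1(X;\Z/m\Z))$ for $m\geq 3$; for instance, the integral unipotent fixes the (primitive, nonseparating) core-curve classes $c$ of the associated cylinder decomposition, so $-U'$ sends $c$ to $-c$, and $2c\equiv 0 \pmod m$ is impossible for $c$ primitive and $m\geq 3$. With that correction, every cusp of $B_m$ is generated by an honest power $T^{ka}$, and your Galois-conjugate argument finishes the proof as in the paper.
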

\begin{proof}
    Let $\rho:\Aff^+(X,\omega) \rightarrow \Sp(H_1(X;\Z))$ and $\rho_m:\Aff^+(X,\omega) \rightarrow \Sp(H_1(X;\Z/m\Z))$ denote the action on homology.
    Recall that $\Aut(X,\omega)$ is trivial, so $\rho$ and $\rho_m$ factor through maps $\widehat{\rho}:\SL(X,\omega) \rightarrow \Sp(H_1(X;\Z))$ and $\widehat{\rho}_m:\SL(X,\omega) \rightarrow \Sp(H_1(X;\Z/m\Z))$.
    We view $\pi^\text{orb}_1(C)$ and $\pi_1(B_m)$ as $\SL(X,\omega)$ and $\Ker(\widehat{\rho}_m) \subseteq \SL(X,\omega)$ respectively.

    The decomposition (\ref{eqn:splitting_VHS}) and formula (\ref{eqn:tautological_action}) imply that if $U \in \SL(X,\omega)$ is unipotent, then $\rho(U) \in \Sp(H_1(X;\Z))$ is unipotent.
    It therefore suffices to show that the stabilizer of each cusp of $B_m$ is generated by a unipotent matrix.
    In other words, let $P \subseteq \SL(X,\omega)$ be a parabolic subgroup whose conjugacy class corresponds to a cusp of $C$, and let $P_m = P \cap \Ker(\widehat{\rho}_m)$.
    Then, we must show that that $P_m$ is generated by a unipotent matrix.

    Recall from \autoref{subsec:teichmuller_curves} that either $P = \la U \ra$, $P = \la U, -U \ra = \la U, -I \ra$ or $P = \la -U \ra$, where $U \in \SL(2,\R)$ is some unipotent matrix.
    It follows that $P_m$ is generated by $U^k$ or $-U^k$ for some $k \in \Z$.  However, we claim that $P_m$ cannot be generated by $-U^k$ since $-U^k \not\in \Ker(\widehat{\rho}_m)$.  
    Indeed, from the decomposition (\ref{eqn:splitting_VHS}) and the formula (\ref{eqn:tautological_action}) we see that $\widehat{\rho}_m(-U^k) = -U'$ for some unipotent $U' \in \Sp(H_1(X;\Z/m\Z))$.
    In particular, $\widehat{\rho}_m(-U^k)$ is nontrivial.
    
    We conclude that $P_m = \la U^k \ra$ for some $k \in \Z$, and $P_m$ is generated by a unipotent matrix as desired.
\end{proof}

\subsubsection{Homology}
Let $\Xt_m \rightarrow B_m$ be a level $m \geq 3$ congruence Veech fibration.
Recall from \autoref{subsec:betti_numbers} that we have a splitting
\begin{equation*}
    H_1(\Xt_m;\Q) = H_1(\pi_1(X)/K;\Q)_{\pi_1(\overline{B}_m)} \oplus H_1(\overline{B}_m;\Q),
\end{equation*}
where $K \subseteq \pi_1(X)$ is the normal closure of all core curves of all cylinder decompositions of $(X,\omega)$.
In fact, the coinvariants term vanishes when $(X,\omega)$ is algebraically primitive.

\begin{proposition}
    If $(X,\omega)$ is algebraically primitive, then for any $m \geq 3$, 
    \begin{equation*}
        H_1(\Xt_m;\Q) \cong H_1(\overline{B}_m;\Q).
    \end{equation*}
\end{proposition}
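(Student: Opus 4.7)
The plan is to apply \autoref{obs:H1_suff_condition}: it suffices to show that the core curve subspace $H_1^{\cc}(X;\Q)$ equals all of $H_1(X;\Q)$. I would combine the real multiplication structure of \autoref{lem:real_multiplication} with the injectivity of the holonomy map from \autoref{lem:hol_inj_and_VG_commutes_RM}(i) to exhibit a $\Q$-basis of $H_1(X;\Q)$ contained in $H_1^{\cc}(X;\Q)$.

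First I would verify that $W \coloneqq H_1^{\cc}(X;\Q)$ is invariant under both the $\Aff^+(X,\omega)$-action and the real multiplication action $\{M_\alpha\}_{\alpha \in \O_K}$. Invariance under $\Aff^+(X,\omega)$ is immediate, since any affine automorphism carries a core curve of one cylinder decomposition to a core curve of another. Invariance under $M_\alpha$ follows from the explicit formula in the proof of \autoref{lem:real_multiplication}, where $M_\alpha$ is expressed as a $\Z$-linear combination of maps of the form $(\phi_i)_* + (\phi_i^{-1})_*$ with $\phi_i \in \Aff^+(X,\omega)$; thus $M_\alpha$ sends any core curve to a $\Z$-linear combination of core curves and hence into $W$.

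After acting by a suitable element of $\SL(2,\R)$ (which does not change the isomorphism type of $\Xt_m \to \overline{B}_m$), I would assume $(X,\omega)$ is a Thurston--Veech surface, so that it admits both a horizontal and a vertical cylinder decomposition. Let $\gamma, \delta \in W$ be core curves of these two decompositions, with $\hol(\gamma) = (a,0)$ and $\hol(\delta) = (0,b)$ for some nonzero $a, b \in \R$. Fix a primitive algebraic integer $\beta \in \O_K$ of degree $g$ over $\Q$. Since $\{1, \beta, \ldots, \beta^{g-1}\}$ is a $\Q$-basis of $K$, the $\O_K$-equivariance of $\hol$ gives that the $2g$ vectors
\begin{equation*}
    \hol(M_\beta^i \gamma) = (\beta^i a, 0), \qquad \hol(M_\beta^i \delta) = (0, \beta^i b), \qquad i = 0, \ldots, g-1,
\end{equation*}
are $\Q$-linearly independent in $\R^2$. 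The injectivity of $\hol$ then forces the $2g$ classes $\{M_\beta^i \gamma, M_\beta^i \delta\}_{i=0}^{g-1}$ to be $\Q$-linearly independent in $H_1(X;\Q)$, and hence to form a basis since $\dim_\Q H_1(X;\Q) = 2g$. All of these classes lie in $W$, so $W = H_1(X;\Q)$, and \autoref{obs:H1_suff_condition} yields the desired isomorphism.

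The main subtlety is the $\O_K$-invariance of $W$: this is not at all obvious from the geometric definition of core curves, but becomes transparent once $M_\alpha$ is realized as a $\Z$-linear combination of pushforwards by affine automorphisms via the construction in \autoref{lem:real_multiplication}. Once this algebraic observation is in hand, the rank count mimics the argument in the proof of \autoref{lem:hol_inj_and_VG_commutes_RM}(i).
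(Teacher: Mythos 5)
Your proof is correct, but it takes a genuinely different route from the paper's. The paper's argument invokes M\"oller's theorem (\autoref{thm:Moller_cyl_bound}) that for an algebraically primitive surface every stable fiber has only genus-zero components, so each cylinder decomposition contributes a span of dimension at least $g$; it then runs a symplectic argument: the spans $V,W$ of the horizontal and vertical core curves of a Thurston--Veech representative are isotropic with $V \cap W = 0$, forcing $H_1(X;\Q) = V \oplus W$, so the core curves span. You instead use only \emph{two} core curves, one horizontal and one vertical, and generate a full $\Q$-basis by applying powers of the real multiplication operator $M_\beta$, using the rank count already present in the proof of \autoref{lem:hol_inj_and_VG_commutes_RM}(i); the one genuinely new ingredient is your observation that $H_1^{\cc}(X;\Q)$ is $M_\alpha$-invariant because $M_\alpha$ is a $\Z$-linear combination of pushforwards $(\phi_i)_* + (\phi_i^{-1})_*$ and affine automorphisms permute cylinder decompositions (hence permute core-curve classes up to sign). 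That observation is correct and makes your argument self-contained: it avoids M\"oller's finiteness theorem entirely and relies only on facts the paper has already established about real multiplication and holonomy. What the paper's route buys in exchange is the stronger structural conclusion that the horizontal and vertical core curves each span a Lagrangian and give a splitting $H_1(X;\Q) = V \oplus W$, whereas yours only shows the core-curve subspace is everything. One small remark: you do not actually need injectivity of $\hol$ at the step where you pull back linear independence --- if the classes $M_\beta^i\gamma, M_\beta^i\delta$ were $\Q$-dependent, their images under the ($\Q$-linear) holonomy map would be too, so $\Q$-independence of the holonomy vectors already suffices; invoking injectivity is harmless but superfluous. The reduction to a Thurston--Veech representative via the $\SL(2,\R)$-action is the same normalization the paper makes and is unproblematic, since it does not change the Teichm\"uller curve, the affine group, or the family.
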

\begin{proof}
    By \autoref{obs:H1_suff_condition}, it's enough to show that the cylinder core curves of $(X,\omega)$ span $H_1(X;\Q)$.
    This follows from \autoref{thm:Moller_cyl_bound}.
    
    Indeed, assume without loss of generality that $(X,\omega)$ is Thurston-Veech.
    Let $V,W \subseteq H_1(X;\Q)$ be the subspaces spanned by the horizontal and vertical core curves respectively.
    Both $V$ and $W$ are isotropic, and by \autoref{thm:Moller_cyl_bound}, they both have dimension at least $g$.
    Since each horizontal core curve has a nonzero algebraic intersection with some vertical core curve, and vice versa, we have that $V \cap W = 0$.
    It follows that $H_1(X;\Q) = V \oplus W$, and in particular the cylinder core curves span $H_1(X;\Q)$.
\end{proof}

\subsubsection{BMY-inequality}
Finally, we show that for an algebraically primitive Veech surface, the congruence Veech fibrations always have a strict BMY-inequality.

\begin{proposition}\label{prop:alg_prim_BMY}
    Let $(X,\omega)$ be an algebraically primitive Veech surface, and let $\Xt_m \rightarrow B_m$ be the level $m \geq 3$ congruence Veech fibration with fiber $(X,\omega)$.
    Suppose $B_m$ has genus $b \geq 1$.
    Then, the BMY-inequality is strict, i.e.\ 
    \begin{equation*}
        c_1^2(\Xt_m) < 3c_2(\Xt_m).
    \end{equation*}
\end{proposition}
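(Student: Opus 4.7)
My plan is to apply the sufficient condition from \autoref{prop:BMY}: the BMY inequality for $\Xt_m$ is strict provided $\frac{g-1}{2}|\Delta_m| < T_m$. Using the formulas derived in the proof of \autoref{prop:BMY_suff_cond},
\[
T_m = d \sum_{c \in \Delta_C} \frac{T_c}{k_c}, \qquad |\Delta_m| = d \sum_{c \in \Delta_C} \frac{1}{n_c},
\]
where $d$ is the degree of $B_m \to \widehat{C}$, $n_c = \ord(\psi_m(A_c))$, and $A_c^{k_c} = D\phi_c$ for the minimal multitwist $\phi_c$ at each cusp $c$ of $\widehat{C}$, the desired inequality reduces to $\sum_c T_c/k_c > \frac{g-1}{2}\sum_c 1/n_c$.

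The essential new input from algebraic primitivity is \autoref{thm:Moller_cyl_bound}: every cylinder decomposition of $(X,\omega)$ has at least $g$ cylinders, so $T_c \geq g$ for each cusp $c$ of $\widehat{C}$. It thus suffices to show the pointwise inequality $g\, n_c/k_c > (g-1)/2$, i.e.\ $n_c/k_c > (g-1)/(2g)$. To control this ratio, I would leverage the unipotent structure used in \autoref{lem:alg-prim-unipotent}: since $H_1(X;\R)$ splits as $\bigoplus_i H_1^{st}(X)^{\sigma_i}$ with $\rho(A_c)$ acting as a parabolic $2\times 2$ Jordan block on each summand, we get $\rho(A_c) = I + N_c$ with $N_c^2 = 0$ on $H_1(X;\Z)$, and so $\rho(\phi_c) - I = k_c N_c$. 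This gives the explicit formula $n_c = m/\gcd(m, a_c)$, where $a_c$ is the content of the integer matrix $N_c$, and the content of $\phi_c - I = \sum_i t_i T_{\gamma_i}$ equals $k_c a_c$. The pointwise bound would then follow by showing that $k_c a_c$ remains sufficiently small relative to $m$, using the integer structure of the multitwist and the fact that the core curves $\gamma_i$ span $H_1(X;\Q)$ (as in the proof of the preceding proposition).

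The step I expect to be the main obstacle is controlling borderline cases where $n_c$ is very small (potentially as small as $1$, when $A_c$ already lies in $\Aff^+(X,\omega)[m]$) while $k_c$ is large, since here the combinatorial estimate on the content of the multitwist matrix is most delicate. If a direct verification of $n_c/k_c > (g-1)/(2g)$ fails in some borderline case, I would instead appeal to \autoref{rem:Jeremy's_remark}: strict BMY follows whenever $\pi_1(\Xt_m) \cong \pi_1(\overline{B}_m)$ can be established (for instance, via \autoref{prop:pi_1_suff_cond} when $(X,\omega)$ admits a suitable Thurston--Veech polygon representative), since $\pi_1(\overline{B}_m)$ is a compact surface group and therefore not quasi-isometric to $\C\H^2$.
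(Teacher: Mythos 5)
Your reduction starts exactly as the paper's proof does: invoke \autoref{prop:BMY} and use \autoref{thm:Moller_cyl_bound} to get $T_c \geq g$ at every cusp of $\widehat{C}$. The gap is in the remaining step. The pointwise bound $n_c/k_c > (g-1)/(2g)$ is left to a proposed analysis of the content $a_c$ of the nilpotent part of $\rho(A_c)$ and a claim that ``$k_c a_c$ remains sufficiently small relative to $m$''; this is never carried out, and you yourself flag the borderline case ($n_c$ small, $k_c$ large) as a possible failure. The missing observation is that no such estimate is needed, because the monodromy around each cusp of $B_m$ is an honest multitwist: this is precisely the content of \autoref{lem:alg-prim-unipotent}, which shows that in the algebraically primitive case the cusp stabilizer of $B_m$ is generated by a unipotent element, i.e.\ the monodromy is a positive power of the minimal multitwist $\phi_c$ and hence performs a positive number of Dehn twists about each of the at least $g$ core curves. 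In your notation this says $k_c \mid n_c$, so $n_c/k_c \geq 1 > (g-1)/(2g)$ automatically; equivalently each cusp of $B_m$ contributes at least $g$ to the total twisting, giving $T_m \geq g\,|\Delta_m| > \frac{g-1}{2}|\Delta_m|$ with no gcd or content bookkeeping. The scenario you worry about (e.g.\ $n_c = 1$ with $k_c > 1$, so that $A_c$ itself lies in $\Aff^+(X,\omega)[m]$) cannot occur: the cusp monodromy $A_c^{n_c}$ would then be a proper root of $\phi_c$ rather than a multitwist, contradicting the unipotent multitwist structure at the cusps that \autoref{lem:alg-prim-unipotent} guarantees and that is already presupposed by the twist count $n_c T_c/k_c$ you quote from the proof of \autoref{prop:BMY_suff_cond}.

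The fallback via \autoref{rem:Jeremy's_remark} does not close the gap at the stated level of generality. It requires $\pi_1(\Xt_m) \cong \pi_1(\overline{B}_m)$, and \autoref{prop:pi_1_suff_cond} delivers this only under the extra hypothesis of a connected Thurston--Veech polygon representative in which every horizontal and vertical core curve crosses the boundary exactly once; that hypothesis is verified later in the paper for the known families, but the proposition is asserted for an arbitrary algebraically primitive Veech surface, and the remark itself is only a sketch. So as written the proposal does not prove the statement; once the multitwist structure of the cusp monodromy is inserted, it collapses to the paper's short argument $T \geq g\,|\Delta| > \frac{g-1}{2}|\Delta|$.
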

\begin{proof}
    Following \autoref{prop:BMY}, it's enough to show that
    \begin{equation*}
        \frac{g-1}{2}\v \Delta \v < T,
    \end{equation*}
    where $\Delta$ is the set of cusps of $B_m$ and $T$ is the total twisting.
    By \autoref{thm:Moller_cyl_bound}, each cylinder decomposition has at least $g$ cylinders.
    By \autoref{lem:alg-prim-unipotent}, the monodromy around each cusp of $B_m$ is a positive power of a Dehn twist around each cylinder.
    We conclude that $T \geq g\v \Delta \v$, giving the desired inequality.
\end{proof}

\subsection{The action on homology in positive characteristic}\label{subsec:action-homology-char-p}

Given an algebraically primitive Veech surface $(X,\omega)$, our goal now is to compute the image of $\rho_m:\Aff^+(X,\omega) \rightarrow \Sp(H_1(X;\F_p))$ for an explicit, infinite set of primes $p$.
In particular, we provide a sufficient condition to compute the image.
Later, we will check that this condition holds for the genus 2 Weierstrass eigenforms in the minimal stratum, the regular polygon surfaces, and the sporadic examples.

We have the following criterion to compute $\Im(\rho_m)$.
\begin{theorem}\label{thm:action_on_homology_mod_p}
Let $(X,\omega)$ be a genus $g$ algebraically primitive Thurston-Veech surface with trace field $K$.
Let the horizontal and vertical multitwists $\phi_H, \phi_V \in \Aff^+(X, \omega)$ have derivatives
\begin{equation*}
D\phi_H = \begin{pmatrix}
1 & c \\ 0 & 1
\end{pmatrix},\ 
D\phi_V = \begin{pmatrix}
1 & 0 \\ d& 1
\end{pmatrix} \in \SL(X,\omega).
\end{equation*}
Let $\alpha \coloneqq cd = \tr(D\phi_H D\phi_V) - \tr(I)$, and suppose $\Im(\hol)$ is a free $\Z[\alpha]$-module of rank $2$ with a $\Z[\alpha]$-basis of the form $\{\hol(\gamma) \coloneqq \left( \begin{smallmatrix} x \\ 0 \end{smallmatrix} \right),\hol(\delta) \coloneqq \left( \begin{smallmatrix} 0 \\ y \end{smallmatrix} \right)\}$.
Let $p \geq 3$ be a prime such that the minimal polynomial $m_\alpha(x)$ of $\alpha$ is irreducible over $\F_p$ and $p^g \neq 9$.  Then,
\begin{equation*}
\Im\left(\Aff^+(X,\omega) \xrightarrow{\rho_m} \Sp(H_1(X;\F_p))\right) \cong \SL(2,\F_{p^g}).
\end{equation*}
\end{theorem}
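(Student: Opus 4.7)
The plan is to use the real multiplication structure to identify $H_1(X;\F_p)$ with a two-dimensional $\F_{p^g}$-vector space in which $\phi_H$ and $\phi_V$ act as explicit unipotent matrices, show $\Im(\rho_p) \subseteq \SL_2(\F_{p^g})$ by a commutant argument, and apply Dickson's classification of subgroups of $\PSL_2$ to prove the matching lower bound.

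To begin, Lemma \ref{lem:hol_inj_and_VG_commutes_RM}(i) makes $\hol$ injective, so the hypothesis on $\hol(H_1(X;\Z))$ yields a $\Z[\alpha]$-module isomorphism $H_1(X;\Z) \cong \Z[\alpha]^2$ with basis $\{\gamma, \delta\}$. Since $m_\alpha$ is irreducible of degree $g$ over $\F_p$, we have $\Z[\alpha]/(p) \cong \F_{p^g}$, and reducing mod $p$ gives $H_1(X;\F_p) \cong \F_{p^g}^2$. A short calculation using the $\Aff^+$-equivariance $\hol(\phi_*\eta) = D\phi \cdot \hol(\eta)$ on the basis vectors shows that $\phi_H$ and $\phi_V$ act by $U = \left(\begin{smallmatrix} 1 & \bar\beta \\ 0 & 1 \end{smallmatrix}\right)$ and $L = \left(\begin{smallmatrix} 1 & 0 \\ \bar\beta' & 1 \end{smallmatrix}\right)$, where $\beta = cy/x$ and $\beta' = dx/y$ lie in $\Z[\alpha]$ and satisfy $\beta\beta' = \alpha$. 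For the upper bound $\Im(\rho_p) \subseteq \SL_2(\F_{p^g})$: Lemma \ref{lem:hol_inj_and_VG_commutes_RM}(ii) places $\Im(\rho_p)$ in the centralizer $\GL_2(\F_{p^g})$ of $\F_{p^g}$, and the decomposition \eqref{eqn:splitting_VHS} (in which $M_\alpha$ acts on each Galois summand by the real scalar $\sigma_i(\alpha)$ and distinct summands are $\omega$-orthogonal) shows that $M_\alpha$ is self-adjoint for the symplectic form $\omega$. A standard trace-form argument then factors $\omega$ as $\tr_{\F_{p^g}/\F_p}\circ\,\tilde\omega$ for a unique $\F_{p^g}$-alternating form $\tilde\omega$, giving $\Sp_{\F_p}(\omega) \cap \GL_2(\F_{p^g}) = \Sp_{\F_{p^g}}(\tilde\omega) = \SL_2(\F_{p^g})$.

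For the reverse inclusion, I would apply Dickson's classification of subgroups of $\PSL_2(\F_{p^g})$ to the image $\bar H$ of $\langle U, L \rangle$ under the projection. Because $U, L$ fix distinct lines and do not commute, $\bar H$ is not cyclic, not dihedral, and not contained in a Borel subgroup. Once the sporadic subgroups $A_4, S_4, A_5$ are excluded, $\bar H$ must be $\PSL_2(\F_{p^m})$ or $\PGL_2(\F_{p^m})$ for some $m \mid g$; but $\tr(UL) = 2 + \bar\alpha$ together with the irreducibility of $m_\alpha$ over $\F_p$ forces $\bar\alpha$ to generate $\F_{p^g}$ over $\F_p$, so $m = g$. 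Since $\SL_2(\F_{p^g}) \to \PSL_2(\F_{p^g})$ is the universal central extension for $p^g \geq 4$, lifting back yields $\langle U, L \rangle = \SL_2(\F_{p^g})$.

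The main obstacle is the exclusion of the sporadic subgroups $A_4, S_4, A_5$ in the Dickson step. For $p \geq 7$ these cases are immediate since they contain no element of order $p$, but for $p \in \{3, 5\}$ one must check that $\bar\alpha$ being a generator of $\F_{p^g}/\F_p$ prevents $\bar H$ from landing inside any small subgroup of $\PSL_2(\F_{p^g})$. The exceptional isomorphism $\PSL_2(\F_9) \cong A_6$ produces an $A_5$-subgroup generated by order-three transvections whose trace data could mimic our $U, L$, and the hypothesis $p^g \neq 9$ is imposed precisely to exclude this configuration.
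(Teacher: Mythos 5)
Your setup and upper bound are essentially the paper's: you identify $H_1(X;\F_p)$ with $\F_{p^g}^2$ via the free $\Z[\alpha]$-module structure and \autoref{lem:hol_inj_and_VG_commutes_RM}, compute that $\phi_H,\phi_V$ act by elementary unipotent matrices whose off-diagonal entries multiply to $\bar\alpha$ (the paper's $c',d'$ with $c'd'=\alpha$), and confine the image to the $\F_{p^g}$-linear maps; your trace-form factorization of the intersection form is a more explicit justification of the containment in $\SL(2,\F_{p^g})$ than the paper's appeal to $\Aut^+_{\F_p(\overline{\alpha})}$. The divergence is in the generation step: the paper simply cites Dickson's two-transvection theorem \cite[Ch.~2, Thm~8.4]{Gorenstein}, which says that $\left(\begin{smallmatrix}1&\bar\alpha\\0&1\end{smallmatrix}\right)$ and $\left(\begin{smallmatrix}1&0\\1&1\end{smallmatrix}\right)$ generate $\SL(2,\F_p(\bar\alpha))$ whenever $p^g\neq 9$, whereas you propose to reprove this from Dickson's subgroup classification of $\PSL_2$.

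As written, that reproof has two concrete holes. First, the subfield alternatives in the classification are $\PSL(2,\F_{p^m})$ with $m\mid g$ \emph{and} $\PGL(2,\F_{p^m})$ with $2m\mid g$, and your inference from $\tr(UL)=2+\bar\alpha$ only kills the $\PSL$ case: an element of a $\PGL(2,\F_{p^m})$-subgroup of $\PSL(2,\F_{p^g})$ can have trace in $\F_{p^{2m}}\setminus\F_{p^m}$ (only its square must lie in $\F_{p^m}$), so the possibility $\bar H\cong\PGL(2,\F_{p^{g/2}})$ for even $g$ survives your argument. It is easily repaired: $\bar U,\bar L$ have order $p$, hence lie in the index-two $\PSL(2,\F_{p^{g/2}})$ part, so all traces of $\bar H$ would lie (up to sign) in $\F_{p^{g/2}}$, contradicting that $2+\bar\alpha$ generates $\F_{p^g}$; alternatively compare $(2+\bar\alpha)^2$ and $(2+2\bar\alpha)^2$. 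Second, you leave the sporadic exclusions for $p\in\{3,5\}$ as an acknowledged obstacle; they do go through by exactly the mechanism you guess: for $p=5$, $A_4$ and $S_4$ have no element of order $5$, and every element of an $A_5$-subgroup has order $1,2,3,5$ and hence trace $0,\pm1,\pm2\in\F_5$, incompatible with $\tr(UL)$; for $p=3$, all traces occurring in $A_4,S_4,A_5$ lie in $\F_9$, so these are excluded once $\F_p(\bar\alpha)=\F_{p^g}\neq\F_9$ --- and $p^g=9$ is precisely the excluded case, where the group degenerates to $\SL(2,5)$ as in Remark~\ref{rem:Dickson_exception}. Finally, your lifting step is better phrased without the universal central extension (which fails at $q=9$): a subgroup of $\SL(2,\F_{p^g})$ surjecting onto $\PSL(2,\F_{p^g})$ must be everything because $-I$ is the unique involution of $\SL(2,\F_{p^g})$ for $p$ odd. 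With these repairs your route is correct, but note it amounts to reproving the classical generation theorem that the paper invokes in one line.
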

\begin{proof}
We start by showing that $\rho_m$ factors through $\SL(2,\F_{p^g})$.
By part (i) of \autoref{lem:hol_inj_and_VG_commutes_RM} and our hypothesis on $\Im(\hol)$, we can view $H_1(X;\Z)$ as a free $\Z[\alpha]$-module of rank $2$ with basis $\{\gamma, \delta\}$, where $\alpha$ acts by the automorphism $M_\alpha$.
Moreover, part (ii) of \autoref{lem:hol_inj_and_VG_commutes_RM} implies that $\Aff^+(X,\omega)$ acts on $H_1(X;\Z)$ by $\Z[\alpha]$-linear automorphisms.
Thus the image of $\Aff^+(X,\omega)$ in $\Aut^+_{\Z}(H_1(X;\Z)) = \SL(H_1(X;\Z))$ lies in the subgroup $\Aut^+_{\Z[\alpha]}(H_1(X;\Z))$, which by our hypothesis on $\Im(\hol)$ is isomorphic to $\SL(2,\Z[\alpha])$.

There is a projection
\begin{equation*}
\Z[\alpha] = \Z[x]/(m_\alpha(x)) \rightarrow \F_p[x]/(m_\alpha(x)) \cong \F_{p^g}.
\end{equation*}
Writing $\overline{\alpha}$ for the image of $\alpha$ under this projection (so $\F_p[x]/(m_\alpha(x)) = \F_p(\overline{\alpha}) \cong \F_{p^g}$), we have that $H_1(X;\F_p)$ is an $\F_p(\overline{\alpha})$-vector space of dimension 2.
Moreover, $\Aff^+(X,\omega)$ acts on $H_1(X;\F_p)$ by $\F_p(\overline{\alpha})$-linear automorphisms.
Thus, the image of $\Aff^+(X,\omega)$ in $\Aut^+_{\F_p}(H_1(X;\F_p)) = \SL(H_1(X;\F_p))$ lies in the subgroup
\[
    \Aut^+_{\F_p(\overline{\alpha})}(H_1(X;\F_p)) \cong \SL(2,\F_{p}(\overline{\alpha})) \cong \SL(2,\F_{p^g}).
\]
This gives the desired factorization.

We now show that the map $\Aff^+(X,\omega) \rightarrow \Aut^+_{\F_p(\overline{\alpha})}(H_1(X;\F_p))$ is surjective, from which the theorem follows.
The $\Z[\alpha]$-basis $\{\gamma, \delta\}$ of $H_1(X; \Z)$ gives us a particular identification $\Aut^+_{\Z[\alpha]}(H_1(X;\Z)) \cong \SL(2,\Z[\alpha])$, and hence a map  $\Theta:\Aff^+(x,\omega) \rightarrow \SL(2,\Z[\alpha])$.
Moreover, this basis descends to an $\F_p(\overline{\alpha})$-basis $\{\overline{\gamma},\overline{\delta}\}$ of $H_1(X;\F_p)$; we get an identification $\Aut^+_{\F_p(\overline{\alpha})}(H_1(X;\F_p)) \cong \SL(2,\F_p(\overline{\alpha}))$ and a map $\overline{\Theta}:\Aff^+(X,\omega) \rightarrow \SL(2,\F_p(\overline{\alpha}))$.
Overall we have the following commutative diagram:
\begin{equation*}
\begin{tikzcd}
	{\Aut^+_{\Z[\alpha]}(H_1(X;\Z))} && {\Aut^+_{\F_p(\overline{\alpha})}(H_1(X;\F_p))} \\
	& {\Aff^+(X,\omega)} \\
	{\SL(2,\Z[\alpha])} && {\SL(2,\F_p(\overline{\alpha}))}
	\arrow[from=2-2, to=1-1]
	\arrow[from=2-2, to=1-3]
	\arrow["\Theta"', from=2-2, to=3-1]
	\arrow["{\overline{\Theta}}", from=2-2, to=3-3]
	\arrow[from=1-1, to=3-1]
	\arrow["\cong", from=3-1, to=1-1]
	\arrow[from=1-1, to=1-3]
	\arrow[from=3-1, to=3-3]
	\arrow[from=3-3, to=1-3]
	\arrow["\cong", from=1-3, to=3-3]
\end{tikzcd}
\end{equation*}

We claim that the elements $\overline{\Theta}(\phi_H)$ and $\overline{\Theta}(\phi_V)$ generate $\SL(2,\F_p(\overline{\alpha}))$, giving the desired surjectivity.
To begin, observe that for any $\phi \in \Aff^+(X,\omega)$ we have
\begin{equation*}
\hol(\phi_*\gamma) = D\phi \begin{pmatrix} x \\ 0 \end{pmatrix} \text{ and }
\hol(\phi_*\delta) = D\phi \begin{pmatrix} 0 \\ y \end{pmatrix}.
\end{equation*}
It follows that the map $\Theta$ is  
\begin{equation*}
\Theta(\phi) = \begin{pmatrix}
x & 0 \\ 0 & y
\end{pmatrix}^{-1}
D\phi
\begin{pmatrix}
x & 0 \\ 0 & y
\end{pmatrix}.
\end{equation*}
In particular, we have that
\begin{equation*}
\Theta(\phi_H) = 
\begin{pmatrix}
1 & c' \\ 0 & 1
\end{pmatrix}
\text{ and }
\Theta(\phi_V) =
\begin{pmatrix}
1 & 0 \\ d' & 1
\end{pmatrix}
\end{equation*}
for some $c',d' \in \Z[\alpha]$.

We now reduce mod $p$.
Letting $\overline{\cdot}$ denote the projection $\Z[\alpha] \rightarrow \F_p(\overline{\alpha})$, we have that
\begin{equation*}
\overline{\Theta}(\phi_H) =
\begin{pmatrix}
1 & \overline{c'} \\ 0 & 1
\end{pmatrix}
\text{ and }
\overline{\Theta}(\phi_V) =
\begin{pmatrix}
1 & 0 \\ \overline{d'} & 1
\end{pmatrix}.
\end{equation*}
Since
\begin{equation*}
c'd' = \tr(\Theta(\phi_H)\Theta(\phi_V))-2 = \tr(D\phi_H D\phi_V) - 2= cd = \alpha,
\end{equation*}
after conjugating by $\left( \begin{smallmatrix} \overline{d'} & 0 \\ 0 & 1 \end{smallmatrix} \right)$ we obtain the matrices
\begin{equation*}
\begin{pmatrix}
1 & \overline{c'd'} \\ 0 & 1
\end{pmatrix}
=
\begin{pmatrix}
1 & \overline{\alpha} \\ 0 & 1
\end{pmatrix}
\text{ and }
\begin{pmatrix}
1 & 0 \\ 1 & 1
\end{pmatrix}.
\end{equation*}
We now appeal to a classical theorem of Dickson (see Gorenstein \cite[Chap. 2, Theorem 8.4] {Gorenstein}): since $p^g \neq 9$, these two matrices generate $\SL(2,\F_p(\overline{\alpha}))$.
Hence $\overline{\Theta}(\phi_H)$ and $\overline{\Theta}(\phi_V)$ generate $\SL(2,\F_p(\overline{\alpha}))$ as desired.
\end{proof}
\begin{remark}\label{rem:Dickson_exception}
    The above argument does not address the case when $\F_{p^g} \cong \F_9$, i.e., when $g = 2$ and $p = 3$.
    Dickson's theorem also addresses this situation: the subgroup
    \begin{equation*}
        \left\langle
        \begin{pmatrix}
        1 & \overline{\alpha} \\ 0 & 1
        \end{pmatrix},
        \begin{pmatrix}
        1 & 0 \\ 1 & 1
        \end{pmatrix}
        \right\rangle
    \end{equation*}
    is isomorphic to $\SL(2,5)$.
    We will use this fact to compute the degree of the congruence cover $B_3 \to C$ in this exceptional situation, see \autoref{rem:double_pentagon}.
\end{remark}

\begin{remark}
    In the case that $m_\alpha(x)$ is not irreducible over $\F_p$, the proof of \autoref{thm:action_on_homology_mod_p} still shows that $\rho_m$ factors through $\SL(2,R)$, where $R = \F_p[x]/(m_\alpha(x))$.
    However, in this case, we cannot apply Dickson's theorem to conclude that $\Aff^+(X,\omega)$ surjects onto $\SL(2,R)$.
\end{remark}

\subsection{The Thurston--Veech construction}\label{subsec:TV_construction}
Following \autoref{subsec:action-homology-char-p}, to compute the image of $\Aff^+(X, \omega)$ in $\Sp(H_1(X; \F_p))$ when $(X, \omega)$ is algebraically primitive, it suffices to verify that $\Im(\hol)$ is a free $\Z[\alpha]$-module of rank 2, where $\alpha$ is determined by two parabolic elements as above.
We now work towards showing that all known algebraically primitive Veech surfaces satisfy this criterion.

We will leverage the \textit{Thurston--Veech construction}: a combinatorial construction that builds a flat surface $(X,\omega)$ using curves on an abstract genus $g$ surface.
All Teichm\"uller curves arise from this process; that is, any Veech surface is $\SL(2,\R)$-equivalent to a Veech surface built from the Thurston-Veech construction.
We briefly describe the construction now, appealing to McMullen \cite[\S 6]{McMullen_survey_2022} for more details
(we present only the ``unweighted'' version of the construction, as it is sufficient for our purposes).
See \autoref{fig:BM} and \autoref{fig:sporadics} for some examples.

The input to the construction is a pair of oriented filling multicurves $(A, B)$ on a surface $X$.
This information can be recorded in a bipartite graph: the vertices correspond to the components of $A$ and $B$, which are colored black or white depending on which multicurve they belong to, and two vertices are joined by an edge if the corresponding curves intersect.
The output of the construction is a horizontally and vertically periodic translation surface structure $(X, \omega)$ on $X$ such that:
\begin{itemize}
    \item the horizontal cylinder core curves are the components of $A$,
    \item the vertical cylinder core curves are the components of $B$,
    \item all of the cylinders have the same inverse modulus $\mu$, leading to global horizontal and vertical multiwists $\phi_H, \phi_V \in \Aff^+(X, \omega)$ about $A$ and $B$ with derivatives
    \begin{equation*}
        D\phi_H = \begin{pmatrix} 1 & \mu \\ 0 & 1 \end{pmatrix}, \qquad D\phi_V = \begin{pmatrix} 1 & 0 \\ -\mu & 1 \end{pmatrix}.
    \end{equation*}
\end{itemize}

We now briefly describe how to build the translation surface $(X, \omega)$.
Let $A = \bigcup_{i = 1}^k C_i$ and $B = \bigcup_{i = k + 1}^n C_i$ denote the components of the two multicurves, and let $h_i$ be a variable denoting the height of the cylinder realizing $C_i$ as a core curve.
For each intersection point between curves $C_k$ and $C_\ell$, the surface $(X, \omega)$ has a rectangle $R_{k\ell} = [0, h_k] \times [0, h_\ell]$. 
The rectangles are connected together according the intersection pattern of the curves $\{C_k\}$.

To solve for the heights $h_k$, we set up a linear system.
Let the matrix $Q_{ij} = |C_i \cap C_j|$ record the intersection pattern of $A$ and $B$.
One can check that a solution to the system $Q \mathbf{h} = \mu \mathbf{h}$, where $\mathbf{h} = (h_1, \ldots, h_n)^t$ and $\mu \in \R$, imposes the condition that all of the cylinders have inverse modulus $\mu$.
As $Q$ is an integer matrix with nonnegative entries, the Perron--Frobenius theorem gives that $Q$ has a maximal positive eigenvalue $\mu$ with a unique (up to scale) eigenvector $\mathbf{h}$.
We use this vector $\mathbf{h}$ to determine the dimensions of the rectangles $R_{k \ell}$ so that $(X, \omega)$ supports the multitwists $\phi_H$ and $\phi_V$.

\subsection{Weierstrass eigenforms in genus 2}\label{subsec:l_tables}
McMullen \cite{McMullen_Billiards_Tcurves_Hilbert_modular_surfaces, DiscSpin} described and classified infinitely many Veech surfaces in the stratum $\Omega \mathcal{M}_2(2)$.
They generate an infinite series of Teichm\"uller curves in $\mathcal{M}_2$ called the \emph{Weierstrass curves}.
Their generating 1-forms have $L$-shaped polygon representatives that depend on two integer parameters $(w, e) \in \Z^2$ satisfying certain arithmetic conditions.
Specifically, the $L$-shaped polygon is a $\lambda \times \lambda$ square and a $w \times 1$ rectangle, where $\lambda \coloneqq (e + \sqrt{e^2 + 4w})/2$ is the positive root of $\lambda^2 = e\lambda + w$.
McMullen showed that the resulting Veech surface admits real multiplication by a quadratic order of \emph{discriminant} $D \coloneqq e^2 + 4w$.
See \autoref{fig:l-table} for the $L$-shaped polygon.
\begin{figure}[ht]
    \centering
    \includegraphics[scale=0.60]{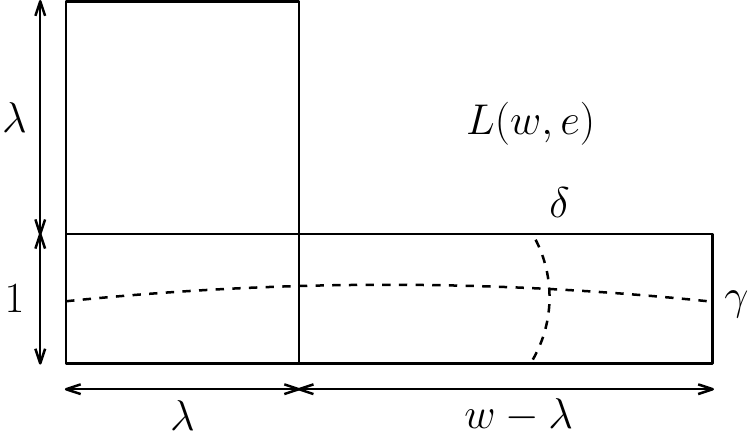}
    \caption{The L-shaped polygon generating the Weierstrass curves in genus 2.
    We use the core curves $\gamma$ and $\delta$ in verifying the hypotheses of \autoref{thm:action_on_homology_mod_p}}
    \label{fig:l-table}
\end{figure}

Given a discriminant $D$, consider the choices of parameters  $(w, e) = ((D-e^2)/4, e)$ where $e$ is $0$ or $\pm 1$ satisfying $e \equiv D \pmod{2}$.
Let $(X_D, \omega_D)$ be the resulting Veech surface and $C_D$ the Teichm\"uller curve.
In this section we assume that $D$ is nonsquare, so that $(X_D, \omega_D)$ is not square-tiled, has quadratic trace field, and is algebraically primitive.

\begin{theorem}\label{thm:invariants-l-tables}
Fix a nonsquare discriminant $D$ and choose a prime $p > 3$ for which $D$ is a quadratic nonresidue.
Let $B_{D,p} \rightarrow C_D$ be the level $p$ congruence cover and $\Xt_{D,p} \rightarrow \overline{B}_{D,p}$ the level $p$ congruence Veech fibration with fiber $(X_D,\omega_D)$.
Then:
\begin{enumerate}[label=(\roman*)]
    \item The number of cusps on $B_{D,p}$, the genus of $B_{D,p}$, the Euler characteristic of $\Xt_{D,p}$, and the signature of $\Xt_{D,p}$ are the values in \autoref{tab:l-tables}.
    \item We have $\pi_1(\Xt_{D,p}) \cong \pi_1(\overline{B}_{D,p})$.
    \item The complex surface $\Xt_{D, p}$ is minimal and of general type with strict BMY-inequality.
\end{enumerate}

\begin{table}[ht]
\begin{tabular}{||c| c ||}
\hline
$\textup{cusps}(B_{D, p})$        & $\frac{d}{p} \v \mathcal{P}_D \v$                                                \\
\hline
$\textup{genus}(B_{D, p})$        & $1 - \frac{d}{2p} \v \mathcal{P}_D \v - \frac{d}{2} \chi(C_D)$                                         \\
\hline
$T(\Xt_{D, p})$        & $d \sum_{(w, h, t, e) \in \mathcal{P}_D}(1 + \frac{h}{w})\lcm(1, w/h)$ \\
\hline
$e(\Xt_{D, p})$   & $-2\left(d\chi(C_D) + \frac{d}{p} \v \mathcal{P}_D \v\right) + T$                                            \\
\hline
$\sigma(\Xt_{D, p})$ & $-\frac{4d}{9} \chi(C_D) - \frac{2}{3} T$                                             \\ \hline
\end{tabular}
\caption{Invariants for the genus 2 Weierstrass Veech fibrations $\Xt_{D, p}$.
Here $d = | \PSL(2, p^2) | = \frac{p^2(p^4-1)}{2}$, and $\chi(C_D)$ is the Euler characteristic of the Teichm\"uller curve $C_D$.
The set $\mathcal{P}_D$ is defined below.}
\label{tab:l-tables}
\end{table}
\end{theorem}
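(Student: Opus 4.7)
The plan is to reduce the theorem to \autoref{thm:main_technical} applied to $(X_D, \omega_D)$, which will identify the image $\Im(\rho_p)$ and hence the deck group of the congruence cover. First, I would verify the hypotheses of \autoref{thm:main_technical} for the L-shaped Veech surface. Using the polygon representative of Figure \ref{fig:l-table}, identify horizontal and vertical parabolic multitwists $\phi_H, \phi_V \in \Aff^+(X_D, \omega_D)$; their derivatives have off-diagonal entries $c, d \in \O_K = \Z[\lambda]$ determined by the inverse moduli of the horizontal (resp.\ vertical) cylinders, which are commensurable thanks to the relation $\lambda^2 = e\lambda + w$. The product $\alpha = cd \in \O_K$ generates the trace field $K = \Q(\sqrt{D})$ over $\Q$, and its minimal polynomial has discriminant equal to $D \cdot n^2$ for a nonzero integer $n$ depending on $(w, e)$; hence $m_\alpha(x)$ is irreducible over $\F_p$ exactly when $D$ is a quadratic nonresidue modulo $p$, matching the assumed hypothesis (for $p > 3$ coprime to $n$). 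Next, verify that $\Im(\hol)$ is a free $\Z[\alpha]$-module of rank $2$ with axis-aligned basis $\{\hol(\gamma), \hol(\delta)\}$, where $\gamma$ (resp.\ $\delta$) is the horizontal (resp.\ vertical) core curve from Figure \ref{fig:l-table}: this uses $\Z[\alpha]$-equivariance of $\hol$ (\autoref{lem:hol_inj_and_VG_commutes_RM}), a rank count ($2g = 4 = 2 \cdot \rank_{\Z}\Z[\alpha]$), and the fact that the cylinder core curves generate $H_1(X_D; \Z)$.

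With the hypotheses in hand, \autoref{thm:main_technical} yields $\Im(\rho_p) \cong \SL(2, \F_{p^2})$. Since $\Aut(X_D, \omega_D)$ is trivial by algebraic primitivity and $-I \in \SL(X_D, \omega_D)$, the commutative diagram in \autoref{subsec:topology-of-base} identifies the deck group of the Galois cover $B_{D,p} \to \widehat{C}_D$ with $G = \PSL(2, \F_{p^2})$, of order $d = p^2(p^4 - 1)/2$. By \autoref{lem:alg-prim-unipotent}, each cusp generator of $\widehat{C}_D$ maps to a nontrivial unipotent element of $\SL(2, \F_{p^2})$, hence has order $p$ in $G$. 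Thus each cusp of $\widehat{C}_D$ is covered by $d/p$ cusps of $B_{D,p}$; since McMullen's classification parametrizes the cusps of $\widehat{C}_D$ by the prototype set $\mathcal{P}_D$, the total cusp count is $\frac{d}{p}|\mathcal{P}_D|$, and Riemann--Hurwitz gives the genus formula. The total twisting $T$ is then computed by summing the per-prototype twist contributions: each cusp over a prototype $(w, h, t, e)$ contributes $(1 + h/w)\lcm(1, w/h)$ Dehn twists, summed over the $d/p$ cusps in its orbit. Finally, $e(\Xt_{D,p})$ and $\sigma(\Xt_{D,p})$ drop out of \autoref{prop:euler_char} and \autoref{prop:signature_formula} using the stratum data $\mu = (2)$, for which $\kappa_\mu = \tfrac{2}{9}$.

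For part (ii), apply \autoref{prop:pi_1_suff_cond}: the core curves $\gamma$ and $\delta$ depicted in Figure \ref{fig:l-table} each cross the boundary of the L-polygon exactly once, so $K = \pi_1(X_D)$ and therefore $\pi_1(\Xt_{D,p}) \cong \pi_1(\overline{B}_{D,p})$. For part (iii), one first observes that $b(B_{D,p}) \geq 1$ from the explicit genus formula: the term $-\frac{d}{2}\chi(C_D)$ is positive of order $d \sim p^6$ while the cusp correction $\frac{d}{2p}|\mathcal{P}_D|$ has order $d/p \sim p^5$, so the genus grows large with $p$. With $b \geq 1$ in hand, \autoref{thm:kodaira-dimension-genus-high} gives minimality and general type, and \autoref{prop:alg_prim_BMY} gives strict BMY.

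The principal obstacle lies in the verification step: showing that $\Im(\hol)$ is \emph{exactly} the free rank-$2$ $\Z[\alpha]$-module generated by $\gamma$ and $\delta$ (not a strict sub- or over-module), together with the discriminant computation that translates irreducibility of $m_\alpha$ over $\F_p$ into the clean condition that $D$ be a quadratic nonresidue. Both steps require a concrete analysis of the L-shape geometry, the action of the real multiplication endomorphism $M_\alpha$ on cylinder cores, and the arithmetic of the quadratic order $\O_D$.
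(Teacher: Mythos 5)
Your overall route is the same as the paper's: verify the hypotheses of \autoref{thm:main_technical} (= \autoref{thm:action_on_homology_mod_p}) for the L-shaped surface, identify the deck group of $B_{D,p}\to\widehat{C}_D$ with $\PSL(2,\F_{p^2})$, count cusps via the prototype set $\mathcal{P}_D$ and the order-$p$ image of each cusp generator, apply Riemann--Hurwitz, and then quote \autoref{prop:euler_char}, \autoref{prop:signature_formula}, \autoref{prop:pi_1_suff_cond}, \autoref{thm:kodaira-dimension-genus-high}, and \autoref{prop:alg_prim_BMY}. (Two small remarks: the paper computes $\alpha=\lambda+(w-e)$ and finds $\operatorname{disc}(m_\alpha)=e^2+4w=D$ exactly, so no auxiliary integer $n$ or extra coprimality hypothesis is needed, and $p\nmid w$ is automatic from the nonresidue assumption; also \autoref{prop:pi_1_suff_cond} requires that \emph{all four} horizontal and vertical core curves cross $\partial P$ once, not just $\gamma$ and $\delta$.)

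However, two steps as written have genuine problems. First, your total twisting is off by a factor of $p$: each cusp of $B_{D,p}$ has monodromy the $p$-th power of the minimal multitwist at the corresponding cusp of $C_D$ (its generator has order $p$ in the deck group), so it contributes $p\left(1+\frac{h}{w}\right)\lcm(1,w/h)$ twists, and summing over the $\frac{d}{p}$ cusps above each prototype gives the table's $T=d\sum_{\mathcal{P}_D}\left(1+\frac{h}{w}\right)\lcm(1,w/h)$; your count of $(1+\frac hw)\lcm(1,w/h)$ per cusp of $B_{D,p}$ yields $\frac{d}{p}\sum(\cdots)$, which contradicts the table you are proving and would propagate into $e(\Xt_{D,p})$ and $\sigma(\Xt_{D,p})$. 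Second, your argument that $\mathrm{genus}(B_{D,p})\ge 1$ is only asymptotic in $p$ for fixed $D$: the comparison is really $p\,\v\chi(C_D)\v \ge \v\mathcal{P}_D\v$, and for small discriminants $\v\chi(C_D)\v$ can be small compared to $\v\mathcal{P}_D\v$ (indeed at the excluded value $p=3$, $D=5$ one has $b=0$), so "the genus grows large with $p$" does not establish the claim for every admissible $p>3$. The paper closes this with a case analysis: for $D>41$ the curve $C_D$ already has positive genus; $D=5$ and $D=8$ are checked directly; and for $8<D\le 41$ the inequality $\v\mathcal{P}_D\v/2 + e_2(C_D)/4 - \v\mathcal{P}_D\v/(2p)\ge 1$ (using Mukamel's count $e_2(C_D)$ of order-2 orbifold points) is verified by computer. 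Some such uniform bound or finite check is an essential part of part (iii) and is missing from your proposal. Finally, note that the paper handles your "principal obstacle" quite concretely: it writes $\Im(\hol)=\langle(w,0),(\lambda,0),(0,1),(0,1+\lambda)\rangle$ from the four core curves and works with the induced $\F_p(\overline{\alpha})$-basis of $H_1(X_D;\F_p)$ directly, so the verification is an explicit lattice computation rather than a delicate integral-module argument.
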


Our formulas are in terms of the Euler characteristic of the Teichm\"uller curve $\chi(C_D)$.
Bainbridge \cite{Bainbridge2007} computed $\chi(C_D)$ in terms of the Euler characteristic of a certain Hilbert modular surface; in particular, he gives a computable formula for $\chi(C_D)$ via number-theoretic quantities such as the zeta function of $\Q(\sqrt{D})$.

Note that when $p = 3$, much of the proof of \autoref{thm:invariants-l-tables} goes through; see \autoref{rem:double_pentagon}.
Note also that in discriminant $D = 5$, the resulting $L$-table is the ``golden $L$'' that generates the double regular pentagon curve, and in discriminant $D = 8$ the resulting $L$-table generates the regular octagon curve.
We work out their invariants from another perspective in \autoref{subsec:b-m}.

\subsubsection{Degree of congruence cover}
We start by verifying that the $(X_D, \omega_D)$ satisfies the hypotheses of \autoref{thm:action_on_homology_mod_p}, so we can compute the degree of the level $p$ congruence cover.

\begin{lemma}\label{prop:l_table_action_modp_homology}
Let $p$ be a prime as in \autoref{thm:invariants-l-tables}.
Then, we have
\begin{equation*}
\Im\left(\Aff^+(X_D,\omega_D) \xrightarrow{\rho_p} \Sp(H_1(X_D;\F_p))\right) \cong \SL(2,\F_{p^2}).
\end{equation*}
\end{lemma}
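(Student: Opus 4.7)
The plan is to verify the hypotheses of \autoref{thm:action_on_homology_mod_p} for $(X_D, \omega_D)$ and apply it directly. Since the $L$-shape is a Thurston--Veech surface, the minimal horizontal and vertical multitwists $\phi_H, \phi_V \in \Aff^+(X_D, \omega_D)$ exist, and I would first compute their derivatives explicitly. The horizontal decomposition has cylinders of moduli $1/(\lambda + w)$ and $(\lambda - 1)/\lambda$; their ratio simplifies to the integer $e + w - 1$ using $\lambda^2 = e\lambda + w$, so the minimal horizontal multitwist satisfies $D\phi_H = \left(\begin{smallmatrix} 1 & \lambda + w \\ 0 & 1 \end{smallmatrix}\right)$. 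The vertical decomposition has cylinders of moduli $1$ and $w$, giving (up to sign) $D\phi_V = \left(\begin{smallmatrix} 1 & 0 \\ 1 & 1 \end{smallmatrix}\right)$. Hence $\alpha = cd = \lambda + w$, and because $w \in \Z$ we have $\Z[\alpha] = \Z[\lambda]$, the quadratic order of discriminant $D$.

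Next I would verify that $\Im(\hol)$ is a free $\Z[\alpha]$-module of rank $2$ with a basis of the required form. The four cylinder core curves shown in \autoref{fig:l-table} form a $\Z$-basis of $H_1(X_D; \Z)$, so $\Im(\hol)$ is the $\Z$-span of the vectors $(\lambda + w, 0), (\lambda, 0), (0, \lambda), (0, 1)$. The vertical component is $\Z + \lambda \Z = \Z[\lambda]$, a free $\Z[\lambda]$-module of rank $1$ on $(0,1)$. The horizontal component is the ideal $(\lambda, w) \subseteq \Z[\lambda]$; from $w = \lambda(\lambda - e)$ one has $w \in (\lambda)$, so $(\lambda, w) = (\lambda)$, which is principal and free of rank $1$ over $\Z[\lambda]$ on $(\lambda, 0)$. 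Thus $\Im(\hol)$ is a free $\Z[\alpha]$-module of rank $2$ with basis $\{(\lambda, 0), (0, 1)\}$, matching the hypothesis of the theorem.

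Finally I would check the irreducibility condition on $p$. A direct computation shows the minimal polynomial of $\alpha = \lambda + w$ over $\Q$ is $x^2 - (2w + e)x + (w^2 + ew - w)$, whose discriminant equals $e^2 + 4w = D$. Hence it is irreducible over $\F_p$ precisely when $D$ is a quadratic nonresidue mod $p$, which is exactly our hypothesis. Since $g = 2$ and $p > 3$, the exceptional case $p^g = 9$ is avoided, and \autoref{thm:action_on_homology_mod_p} yields $\Im(\rho_p) \cong \SL(2, \F_{p^2})$. The only point requiring real care is the identification of the horizontal holonomy sublattice with the principal ideal $(\lambda) \subseteq \Z[\lambda]$ using the relation $\lambda^2 = e\lambda + w$; once that identification is in hand, every other step reduces to a direct algebraic verification.
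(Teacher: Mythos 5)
Your overall strategy is exactly the paper's: exhibit the horizontal and vertical multitwists, set $\alpha$ equal to the product of their off-diagonal entries, check that $\Im(\hol)$ is free of rank $2$ over $\Z[\alpha]$ with a basis of the form $\{(x,0),(0,y)\}$, and translate irreducibility of $m_\alpha$ over $\F_p$ into the nonresidue condition. The gap is in the geometric input: you are working with the wrong $L$-shape. In the paper's convention (McMullen's prototype $(w,1,0,e)$, which is what \autoref{fig:l-table} depicts), the $\lambda\times\lambda$ square sits on top of the $w\times 1$ rectangle. Hence the horizontal cylinders have circumferences $w$ and $\lambda$ and heights $1$ and $\lambda$ (moduli $1/w$ and $1$), the vertical cylinders have circumferences $1$ and $1+\lambda$ (moduli $w-\lambda$ and $\lambda/(1+\lambda)$), the primitive parabolics are $\left(\begin{smallmatrix}1 & w\\ 0 & 1\end{smallmatrix}\right)$ and $\left(\begin{smallmatrix}1 & 0\\ (\lambda+1)/\lambda & 1\end{smallmatrix}\right)$, the correct value is $\alpha=\lambda+w-e$, and the core curves have holonomies $(w,0),(\lambda,0),(0,1),(0,1+\lambda)$. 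Your configuration (rectangle glued to the right of the square, bottom-aligned) is a different surface: your matrices $\left(\begin{smallmatrix}1 & \lambda+w\\ 0 & 1\end{smallmatrix}\right)$ and $\left(\begin{smallmatrix}1 & 0\\ 1 & 1\end{smallmatrix}\right)$ are not in $\SL(X_D,\omega_D)$, since every upper-triangular parabolic there has translation length a rational multiple of $w$ (and $\lambda+w$ is irrational), while every lower-triangular one has translation length a rational multiple of the irrational number $(\lambda+1)/\lambda$ (so $1$ does not occur). Worse, your description is internally impossible for $D=5$, which the lemma covers: there $e=-1$, $w=1$, so $\lambda=(\sqrt5-1)/2<1$, and your top horizontal cylinder of height $\lambda-1$ and your modulus ratio $e+w-1=-1$ are negative.

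The damage is limited, and repairing it essentially reproduces the paper's proof. Since your $\alpha$ differs from the correct one by the integer $e$, one has $\Z[\alpha]=\Z[\lambda]$ either way and the minimal polynomial still has discriminant $D$, so your prime condition is unaffected; moreover the lattice you wrote down happens to coincide with the true one, $\Im(\hol)=(\Z w+\Z\lambda)\oplus(\Z+\Z\lambda)$. So your identification of the horizontal part with the principal ideal $(w,\lambda)=(\lambda)\subseteq\Z[\lambda]$ is a clean way to get the integral freeness hypothesis (the paper instead argues mod $p$, using that $p\nmid w$), but it only becomes a proof of this lemma after you attach the correct holonomy vectors to the core curves of \autoref{fig:l-table} and use $\alpha=\lambda+w-e$.
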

\begin{proof}
We compute that the horizontal and vertical multitwists on $(X_D, \omega_D)$ have derivatives
\begin{equation*}\label{eq:l_table_multitwists}
U \coloneqq
\begin{pmatrix}
1 & w \\
0 & 1
\end{pmatrix}, \qquad
L \coloneqq
\begin{pmatrix}
1 & 0 \\ \frac{\lambda + 1}{\lambda} & 1
\end{pmatrix}.
\end{equation*}
As in \autoref{thm:action_on_homology_mod_p}, set $\alpha \coloneqq w\frac{\lambda+1}{\lambda}$.
Since $\frac{w}{\lambda} = \lambda - e$, we have
\begin{equation*}
\alpha = w \frac{\lambda + 1}{\lambda} = (\lambda - e)(\lambda + 1) = \lambda^2 - e\lambda + \lambda - e = (e\lambda + w ) - e\lambda + \lambda - e = \lambda + (w - e).
\end{equation*}

We can see that the four horizontal and vertical core curves are a generating set for $\pi_1(X_D)$ and a basis for $H_1(X; \Z)$.
It follows that
\begin{align*}
\Im(\hol) =
\left\langle
\begin{pmatrix}
    w \\
    0
\end{pmatrix},
\begin{pmatrix}
    \lambda \\
    0
\end{pmatrix},
\begin{pmatrix}
    0 \\
    1
\end{pmatrix},
\begin{pmatrix}
    0 \\
    1 + \lambda
\end{pmatrix}
\right\rangle.
\end{align*}
Assuming that $w$ is coprime to $p$ (which we check below), we conclude that the curves $\gamma$ and $\delta$ in \autoref{fig:l-table} form an $\F_p(\overline{\alpha})$-basis of $H_1(X;\F_p)$.
Following \autoref{thm:action_on_homology_mod_p}, when $m_\alpha(x)$ is irreducible in $\F_p[x]$ we have that the image of $\Aff^+(X_D, \omega_D)$ in $\Sp(H_1(X_D; \F_p))$ has order $|\SL(2, \F_{p^2})|$.

It remains to classify for which primes $p$ the minimal polynomial $m_\alpha(x)$ is irreducible over $\F_p$.
We compute that
\begin{align*}
    m_\alpha(x) = x^2 + (e - 2w)x + w(w - e - 1) = \left(x - \frac{e - 2w}{2}\right)^2 - \frac{e^2 + 4w}{4}.
\end{align*}
Since $e^2 + 4w = D$, we see that $m_\alpha(x)$ is irreducible over $\F_p$ exactly when $D/4$, and hence $D$, is a quadratic nonresidue in $\F_p$.
Note that if $w$ were a multiple of $p$, then $D / 4 \equiv e^2 / 4$ would be a quadratic residue.
\end{proof}

It follows that the level $p$ congruence cover $B_{D,p} \rightarrow \widehat{C}_D$ has degree $d = |\PSL(2, \F_{p^2})|$.

\subsubsection{Cusps and genus}
Following McMullen \cite[Theorem 4.1]{DiscSpin}, as $D$ is nonsquare we have that the cusps of the Teichm\"uller curve $C_D$ are in bijection with the set of all \emph{integer prototypes}
\begin{align*}
    \mathcal{P}_D \coloneqq \{(w, h, t, e) \in \Z^4:\ & D = e^2 + 4wh, w > 0, h > 0,\\
    & 0 \le t < \gcd(w, h), h + e < w, \gcd(w, h, t, e) = 1\}.
\end{align*}

\begin{remark}
    When $D \equiv 1 \pmod{8}$, we must consider a $\Z/2\Z$-valued invariant of a prototype called \emph{spin} when counting the elements of $\mathcal{P}_D$.
    In the 1 mod 8 case, we only include prototypes with the same spin as the standard parameter choice $(w, 1, 0, e)$ as above.
    See McMullen \cite[\S 5]{DiscSpin} for more information about spin and its relationship to the Weierstrass curves.
    For our purposes, we elide the issue by letting $\mathcal{P}_D$ denote prototypes with the correct spin.
\end{remark}

In particular, the number of cusps on $C_D$ is $|\mathcal{P}_D|$.
The prototype $(w, h, t, e)$ corresponds to a cylinder decomposition of $(X_D, \omega_D)$ into two cylinders: a short cylinder of inverse modulus 1, and a long cylinder of inverse modulus $w / h$.
If we write $c \coloneqq \lcm(1, w/h)$, the twisting about this cusp is $c + c(h/w)$.
One checks that the multitwist is the cusp generator (as $D$ is nonsquare, there are no fractional multitwists), and that the multitwist acts nontrivially mod $p$ (following the formula \cite[Prop~6.3]{farb_margalit}). 
It follows that each cusp generator of $C_D$ maps to an order $p$ element of $\Sp(H_1(X;\F_p))$. 
Letting $\Delta_{D,p}$ denote the set of cusps on $B_{D,p}$, we have that
\begin{equation*}
    \v \Delta_{D,p} \v = \frac{d}{p} \v \mathcal{P}_D \v.
\end{equation*}
The total twisting is
\begin{equation*}
    T(\Xt_{D, p}) = d \sum_{(w, h, t, e) \in \mathcal{P}_D}\left(1 + \frac{h}{w}\right)\lcm(1, w/h).
\end{equation*}

To compute the genus $b$ of $B_{D,p}$, we appeal to the fact that (orbifold) Euler characteristic is multiplicative.
That is, we have that $\chi(B_{D,p}) = d\chi(C_D)$; in particular, it follows that
\begin{equation*}
    2 - 2b - \frac{d}{p}\v \mathcal{P}_D \v = d\chi(C_D).
\end{equation*}

\subsubsection{Topological invariants}
Next we compute $e(\Xt_{D,p})$ and $\sigma(\Xt_{D,p})$ following \autoref{prop:euler_char} and \autoref{prop:signature_formula}.
Since $(X_D, \omega_D) \in \Omega \mathcal{M}_2(2)$, we have that $\kappa_\mu = 2/9$.
We can now plug these quantities into \autoref{prop:euler_char} and \autoref{prop:signature_formula} to compute that
\begin{equation*}
    e(\Xt_p) = -2(d\chi(C_D) + \v \Delta_{D,p} \v) + T
\end{equation*}
and
\begin{equation*}
    \sigma(\Xt_p) = -\frac{4}{9} \cdot d \chi(C_D) - \frac{2}{3} T.
\end{equation*}

The fact that $\pi_1(\X_{D,p}) \cong \pi_1(\overline{B}_{D,p})$ follows directly from \autoref{prop:pi_1_suff_cond}.

\subsubsection{Geometric invariants}
    Note that the BMY-inequality is strict by \autoref{prop:alg_prim_BMY}.
    To show that $\Xt_{D, p}$ is general type, it suffices by \autoref{thm:kodaira-dimension-genus-high} to show that the genus $b$ of $B_{D, p}$ is at least 1.
    When $D > 41$, $C_D $ has genus greater than 0 (see \cite[\S 4]{McMullen_survey_2022}, and for all smaller discriminants $C_D$ has genus 0.
    
    When $D = 5$, we have $\chi(C_D) = -3/10$ and $C_D$ has one cusp.
    So
    \begin{align*}
        b = 1 - \frac{d}{2p}(1) - \frac{d}{2}(-3/10) = d\left(\frac{3}{20} - \frac{1}{2p}\right) + 1.
    \end{align*}
    We see that $b \ge 1$ when $p \ge 5$.
    We remark that $b = 0$ when $p = 3$.

    When $D = 8$, we have $\chi(C_D) = -3/4$ and $C_D$ has two cusps.
    We have
    \begin{align*}
        b = 1 - \frac{d}{2p}(2) - \frac{d}{2}(-3/4) = d\left(\frac{3}{8} - \frac{1}{p}\right) + 1.
    \end{align*}
    It follows that for all $p \ge 3$ we have $b \ge 1$.
    
    Now assume $8 < D \le 41$.
    Per our formula in \autoref{tab:l-tables}, we must show that
    \begin{align*}
        &1 - \frac{d}{2p} \v \mathcal{P}_D \v - \frac{d}{2} \left(2 - 2(0) - \v \mathcal{P}_D \v - e_2(C_D)\left(1 - \frac{1}{2}\right) \right) \ge 1,
    \end{align*}
    where $e_2(C_D)$ is the number of order 2 orbifold points on $C_D$.
    (Mukamel \cite{Mukamel_2014} gave an expression for $e_2(C_D)$ in terms of class numbers of number fields.)
    An equivalent inequality is $\v \mathcal{P}_D\v / 2 + e_2(C_D)/4 - \v \mathcal{P}_D\v / 2p \ge 1$.
    Computer calculation verifies the inequality for $p \ge 3$.

Using known values on the Teichm\"uller curves $C_D$ (see, e.g., McMullen \cite[Appendix C]{McMullen_survey_2022}), we computed the Chern numbers of $\Xt_p$ for discriminants up to 60 and various small primes; see \autoref{fig:l-table-chern-nos}.
In each figure, the top line is the BMY line and the lower line is the Noether line.
It appears that the ratio of Chern numbers grows approximately linearly in $D$; it would be interesting to theoretically verify whether this is the case.
\begin{figure}
    \begin{subfigure}{\textwidth}
        \centering
        \hspace*{-1cm}
        \includegraphics[scale=0.60]{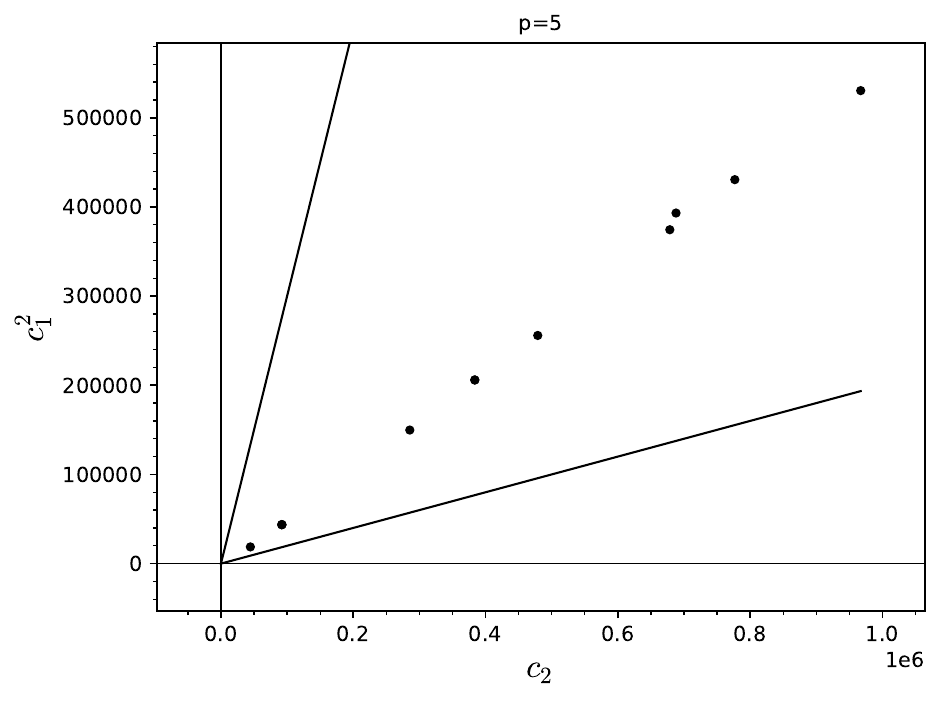}
    \end{subfigure}
    \begin{subfigure}{\textwidth}
        \centering
        \includegraphics[scale=0.56]{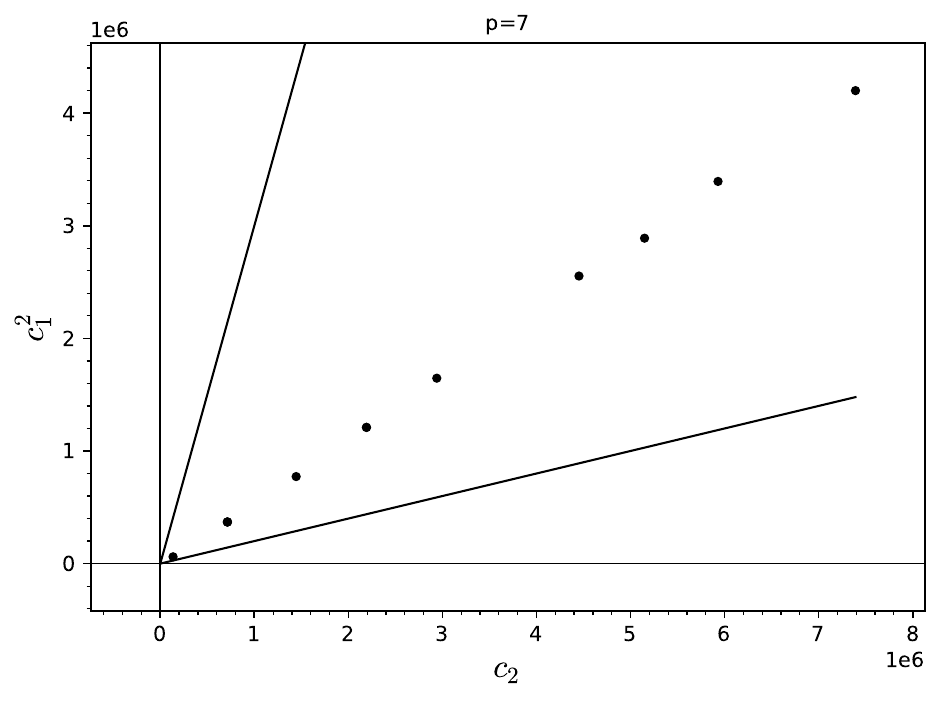}
    \end{subfigure}
    \begin{subfigure}{\textwidth}
        \centering
        \includegraphics[scale=0.56]{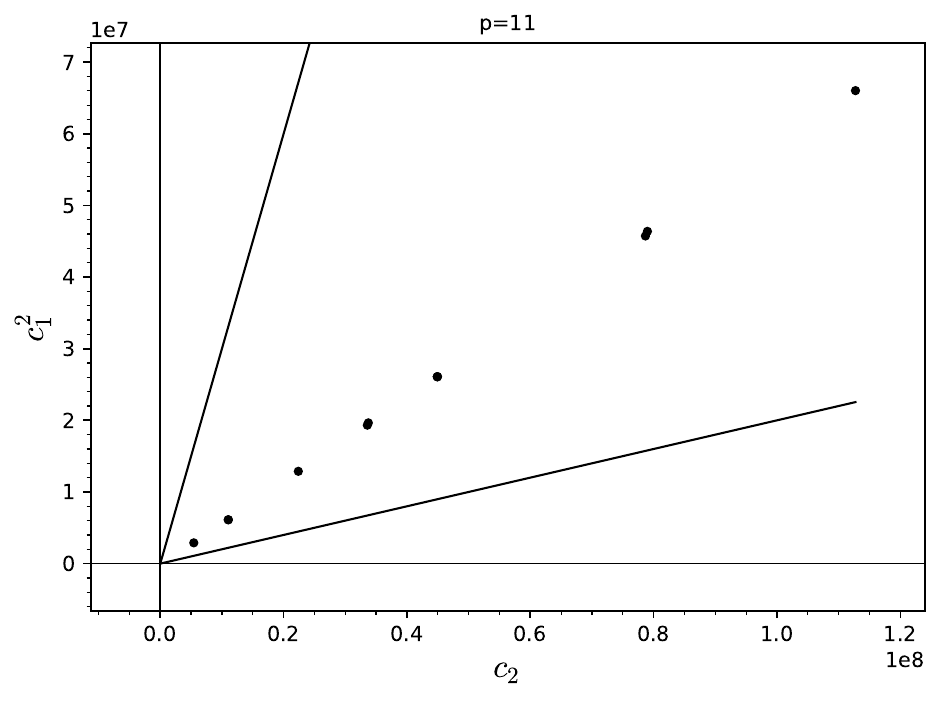}
    \end{subfigure}
    \caption{Chern numbers $(c_2, c_1^2)$ for Veech fibrations $\X_{D, p}$ with $p = 5, 7, 11$.
    We only plot discriminants $D$ for which $D$ is a nonresidue.
    Note that the scales of the axes use scientific notation; the relevant powers of 10 are at the upper left and lower right corners.} 
    \label{fig:l-table-chern-nos}
\end{figure}

\subsection{Regular polygons}\label{subsec:b-m}

For each integer $n \geq 3$, let $(Y_n,\omega_n)$ denote the regular $n$-gon surface if $n$ is even, and the double regular $n$-gon if $n$ is odd.
These surfaces, originally studied by Veech \cite{Veech}, are all geometrically primitive.
Later, Bouw and M\"oller \cite{Bouw-Moller} constructed a larger family of geometrically primitive Veech surfaces indexed by pairs of integers $2 \leq n' \leq n$; in the case $n' = 2$, they recover the surfaces $(Y_n,\omega_n)$.
Wright \cite{WrightTriangles} proved that the only pairs $(n',n)$ giving an algebraically primitive Bouw-M\"oller surface are
\begin{itemize}
    \item $(2,q)$ for $q > 3$ prime,
    \item $(2, 2q)$ for $q > 3$ a prime,
    \item $(2, 2^k)$ for $k > 2$.
\end{itemize}
We restrict our attention to $(Y_n,\omega_n)$ for these three series of $n$.

Let $C_n$ denote the Teichm\"uller curve of $(Y_n,\omega_n)$, and let $B_{n,p} \rightarrow C_n$ be the level $p$ congruence cover.
In this section, we apply the results above to compute the invariants of the congruence Veech fibration $\Xt_{n,p} \rightarrow \overline{B}_{n,p}$ for certain primes $p$.
We obtain the following.

\begin{theorem}\label{thm:BM_computations}
    Let $n = q$, $n = 2q$, or $n=2^k$ for $q > 3$ prime and $k > 2$.
    Let $\alpha = 4\cos\left(\frac{\pi}{n}\right)^2$, and let $p \geq 3$ be a prime such that the minimal polynomial of $\alpha$ is irreducible over $\F_p$, and $(p,g) \neq (3,2)$ where $g$ is the genus of $Y_n$.
    Let $\Xt_{n,p} \rightarrow \overline{B}_{n,p}$ be the level $p$ congruence Veech fibration of $(Y_n,\omega_n)$.
    Then, 
    \begin{enumerate}[label=(\roman*)]
        \item the genus of $B_{n,p}$, the number of cusps of $B_{n,p}$, the Euler characteristic of $\Xt_{n,p}$, and the signature of $\Xt_{n,p}$ are given by Tables \ref{tab:BM_q_and_2q} and \ref{tab:BM_2tok},
        \item $\pi_1(\Xt_{n,p}) \cong \pi_1(\overline{B}_{n,p})$,
        \item $\Xt_{n,p}$ is a minimal general type surface with strict BMY-inequality.
    \end{enumerate}
\end{theorem}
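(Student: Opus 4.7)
The plan is to reduce everything to an application of \autoref{thm:action_on_homology_mod_p} (the main technical theorem) together with the structural results from \autoref{sec:top_invariants}--\autoref{sec:geometric_invariants}. The crux is verifying the hypotheses of \autoref{thm:action_on_homology_mod_p} for each family $(Y_n,\omega_n)$, and then the bookkeeping is routine.

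First, I would realize each $(Y_n,\omega_n)$ via the Thurston--Veech construction with an explicit pair of horizontal/vertical multicurves, reading off the horizontal and vertical multitwists $\phi_H,\phi_V$. A direct computation shows that the product of the horizontal and vertical inverse moduli equals $\alpha = 4\cos(\pi/n)^2$, which generates the trace field $K$ of $\SL(Y_n,\omega_n)$, and since $(Y_n,\omega_n)$ is algebraically primitive, $[K:\Q]=g$. Using that each core curve of the cylinder decomposition is realized by a simple closed curve on the polygon representative crossing the boundary exactly once (here I would draw the standard picture of $Y_n$), I would identify an explicit generating set of $\pi_1(Y_n)$ consisting of horizontal and vertical core curves $\gamma,\delta$ and their $M_\alpha$-translates. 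This simultaneously verifies the hypothesis of \autoref{prop:pi_1_suff_cond}, giving $\pi_1(\Xt_{n,p}) \cong \pi_1(\overline{B}_{n,p})$, and shows that $\hol(H_1(Y_n;\Z))$ is a free $\Z[\alpha]$-module of rank $2$ with a basis of the required form $\{\hol(\gamma),\hol(\delta)\}$.

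Next, the hypothesis that the minimal polynomial of $\alpha$ is irreducible over $\F_p$ together with $(p,g)\neq(3,2)$ is exactly what is needed to invoke \autoref{thm:action_on_homology_mod_p}, which yields
\begin{equation*}
    \Im(\rho_p) \cong \SL(2,\F_{p^g}).
\end{equation*}
Quotienting by $\langle -I,\rho_p(\Aut(Y_n,\omega_n))\rangle$ as in \autoref{subsec:topology-of-base} (noting $\Aut(Y_n,\omega_n)$ is trivial by algebraic primitivity) gives the degree of the Galois cover $B_{n,p}\to \widehat{C}_n$ as $d = |\PSL(2,\F_{p^g})|$. With the degree in hand, the cusp count and twisting count come from enumerating the $\Aff^+$-orbits of cylinder decompositions of $(Y_n,\omega_n)$ (these are classical for the regular polygon surfaces and were described by Veech and Bouw--M\"oller) and then determining, for each cusp generator $A_c\in\PSL(Y_n,\omega_n)$, its order in the deck group $\rho_p(\SL(Y_n,\omega_n))/\langle -I\rangle \cong \PSL(2,\F_{p^g})$. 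Since each cusp generator is (a root of) a unipotent matrix of the form $\bigl(\begin{smallmatrix}1 & *\\0 & 1\end{smallmatrix}\bigr)$, its image in $\PSL(2,\F_{p^g})$ has order $p$ (or is scaled by the fractional-twist factor, handled case by case in Tables \ref{tab:BM_q_and_2q}--\ref{tab:BM_2tok}), so the number of cusps over $c$ is $d/p$ and the twisting over $c$ scales accordingly. The genus of $B_{n,p}$ then comes from multiplicativity of orbifold Euler characteristic $\chi(B_{n,p}) = d\cdot \chi(\widehat{C}_n)$ combined with $\chi(B_{n,p}) = 2-2b - |\Delta_{n,p}|$.

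Finally, I would plug these quantities into \autoref{prop:euler_char} and \autoref{prop:signature_formula}, using $\kappa_\mu$ for the stratum of $(Y_n,\omega_n)$ (a single zero of order $2g-2$ in the odd $q$ case, two zeros in the even cases), to produce the formulas in Tables \ref{tab:BM_q_and_2q}--\ref{tab:BM_2tok}. The BMY-inequality is strict by \autoref{prop:alg_prim_BMY}, and minimality plus general type follow from \autoref{thm:kodaira-dimension-genus-high} once I verify $b \geq 1$; this is a finite check for each of the three families, comparing $d/p\cdot(\text{number of cusps of }\widehat{C}_n)$ against $d\cdot\chi(\widehat{C}_n)$, entirely parallel to the genus $2$ analysis at the end of \autoref{subsec:l_tables}. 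The main obstacle I anticipate is a careful case analysis to determine which cusp generators of $\widehat{C}_n$ are genuine integer multitwists versus fractional multitwists (since the Veech group in the even-$n$ cases has multiple cusps with different behavior), and to verify that the $\Z[\alpha]$-module structure on $\hol(H_1)$ is indeed free of rank $2$ rather than only free after inverting some small prime; the Thurston--Veech picture together with Moller's cylinder bound (\autoref{thm:Moller_cyl_bound}) should make this tractable uniformly across the three families.
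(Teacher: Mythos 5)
Your plan follows the same route as the paper (Thurston--Veech model, \autoref{thm:action_on_homology_mod_p}, deck group $\PSL(2,\F_{p^g})$, cusp/twisting counts, Riemann--Hurwitz, then \autoref{prop:euler_char}, \autoref{prop:signature_formula}, \autoref{prop:pi_1_suff_cond}, \autoref{thm:kodaira-dimension-genus-high}, \autoref{prop:alg_prim_BMY}), but it has a genuine gap at exactly the step you flag as the anticipated obstacle: verifying that $\hol(H_1;\Z)$ is a free $\Z[\alpha]$-module of rank $2$ with a basis consisting of one horizontal and one vertical holonomy vector. Saying you will ``identify a generating set of core curves $\gamma,\delta$ and their $M_\alpha$-translates'' does not establish this; what must be shown is that the holonomy of \emph{every} horizontal (resp.\ vertical) core curve lies in $\Z[\alpha]\cdot\hol(\gamma)$ (resp.\ $\Z[\alpha]\cdot\hol(\delta)$), and M\"oller's cylinder bound (\autoref{thm:Moller_cyl_bound}) says nothing about that. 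The paper supplies this via Hooper's staircase model $(X_n,\omega_n)$ (the Thurston--Veech surface of the $A_{n-1}$ diagram, quotiented by the order-2 translation automorphism when $n$ is even) and an induction up the staircase showing that, after normalization, every horizontal cylinder height is an odd integer polynomial in $\mu=2\cos(\pi/n)$ and every vertical height is an even integer polynomial in $\mu$ (\autoref{lem:staircase-heights}); this is precisely what puts all horizontal holonomies in $\mu^2\Z[\mu^2]$ and all vertical ones in $\mu\Z[\mu^2]$, yielding the required $\Z[\alpha]$-basis (\autoref{cor:staircases-satisfy-criterion}) and hence, via \autoref{thm:action_on_homology_mod_p}, the cover degree $d=|\PSL(2,\F_{p^g})|$. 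Without an argument of this kind the degree $d$, and therefore every entry of Tables \ref{tab:BM_q_and_2q}--\ref{tab:BM_2tok}, is unsupported.

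Two smaller points. First, the ``fractional multitwist'' case analysis you anticipate does not arise: in all three families $-I\in\SL(X_n,\omega_n)$, every cusp of $\widehat{C}_n$ is regular, and each cusp generator is itself the multitwist performing a single Dehn twist in each cylinder, which is why the total twisting is simply $dg$, $d(2g+1)$, or $2dg$. Second, your stratum bookkeeping is off for $n=2^k$: since $2^k\equiv 0\pmod 4$ the surface has a \emph{single} zero (two zeros of equal order occur only for $n=2q\equiv 2\pmod 4$), and using the wrong stratum changes $\kappa_\mu$ and hence the signature in Table \ref{tab:BM_2tok}. Your genus-positivity check via the explicit genus formula is workable (an easy inequality isolating the excluded case $(n,p)=(5,3)$), though the paper instead rules out genus $0$ using the classification of finite group actions on $S^2$ applied to the deck group of $B_{n,p}\to\widehat{C}_n$.
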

\begin{table}[ht]
\begin{tabular}{||c| c c ||} 
 \hline
  & $n=q$ & $n=2q$  \\  
 \hline
 genus($B_{n,p}$) & $1 + \frac{d}{2}\left(\frac{1}{2} - \frac{1}{q} - \frac{1}{p}\right)$ & $1 + d\left(\frac{1}{2} - \frac{1}{2q} - \frac{1}{p}\right)$  \\ 
 \hline
 cusps($B_{n,p}$) & $\frac{d}{p}$ & $\frac{2d}{p}$  \\
 \hline
 $e(\Xt_{n,p})$ & $d\left((q-3)\left(\frac{1}{2}-\frac{1}{q}-\frac{1}{p}\right)+\frac{q-1}{2}\right)$ & $d\left((2q-6)\left(\frac{1}{2}-\frac{1}{2q}-\frac{1}{p}\right) + q\right)$  \\
 \hline
 $\sigma(\Xt_{n,p})$ & $-\frac{d(q^2-1)}{4q}$ & $-\frac{d(q^2+2q+3)}{3q}$ \\
 \hline
\end{tabular}
\caption{The invariants of $B_{n,p}$ and $\Xt_{n,p}$ for $n = q$ or $n=2q$ where $q > 3$ is prime.  Here $d = \v\PSL(2,\F_{p^g})\v = \frac{p^g(p^{2g}-1)}{2}$ is the degee of the congruence cover $B_{n,p} \rightarrow \widehat{C}_n$.}
\label{tab:BM_q_and_2q}
\end{table}

\begin{table}[ht]
    \centering
    \begin{tabular}{||c|c||}
    \hline
     & $n=2^k$ \\
     \hline
     genus($B_{n,p}$) & $1 + d\left(\frac{1}{2} - \frac{1}{2^k} - \frac{1}{p}\right)$ \\
     \hline
     cusps($B_{n,p}$) & $\frac{2d}{p}$ \\
     \hline
     $e(\Xt_{n,p})$ & $d\left((2^k-4)\left(\frac{1}{2}-\frac{1}{2^k}-\frac{1}{p}\right) + 2^{k-1}\right)$ \\
     \hline
     $\sigma(\Xt_{n,p})$ & $-d(2^{k - 2} + \frac{1}{3})$ \\
     \hline
    \end{tabular}
    \caption{The invariants of $B_{n,p}$ and $\Xt_{n,p}$ for $n = 2^k$ where $k > 2$.  Here $d = \v\PSL(2,\F_{p^g})\v = \frac{p^g(p^{2g}-1)}{2}$ is the degree of the congruence cover $B_{n,p} \rightarrow \widehat{C}_n$.}
    \label{tab:BM_2tok}
\end{table}

\begin{remark}\label{rem:double_pentagon}
    For the exceptional case $(p,g) = (3,2)$, the proofs of parts (i) and (ii) in \autoref{thm:BM_computations} go through almost identically.
    The only issue is that in this case, the degree $d$ of the congruence cover $B_{n,3} \rightarrow \widehat{C}_n$ is not $\v \PSL(2,\F_{9}) \v$.
    Rather, since the parabolics $\left( \begin{smallmatrix} 1 & 2\cos(\pi/n) \\ 0 & 1 \end{smallmatrix} \right)$ and $\left( \begin{smallmatrix} 1 & 0 \\ 2\cos(\pi/n) & 1 \end{smallmatrix} \right)$ generate $\SL(X,\omega)$ (see below), the discussion in \autoref{rem:Dickson_exception} tells us that $d = \v \PSL(2,5) \v = 60$ in this case.

    The case $(p,g) = (3,2)$ is realized for the double pentagon surface $(Y_5,\omega_5)$ and its level 3 congruence Veech fibration $\Xt_{5,3} \rightarrow B_{5,3}$.
    Note that the $(Y_5, \omega_5)$ is in the same $\SL(2, \R)$-orbit as the \textit{golden L}, i.e., the genus 2 Weierstrass L-shaped eigenform of discriminant $5$.
    In this case, $B_{5,3}$ has genus 0; the cover $B_{5,3} \rightarrow \widehat{C}_5$ is topologically equivalent to the quotient of the regular dodecahedron by its symmetry group $A_5$.
    The total space $\Xt_{5,3}$ is simply connected, and we can compute that $e(\Xt_{5,3}) = 116$ and $\sigma(\Xt_{5,3}) = -72$.
    While our minimality argument for high-genus fibrations in \autoref{thm:kodaira-dimension-genus-high} does not apply to this example, Dawei Chen has suggested to us the following direct argument that $\Xt_{5,3}$ is minimal.

    \begin{lemma}
        The level $3$ congruence Veech fibration $\Xt_{5,3} \rightarrow \overline{B}_{5,3}$ for the double pentagon surface $(Y_5, \omega_5)$ is minimal.
    \end{lemma}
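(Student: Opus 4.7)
The plan is to show that $\Xt_{5,3}$ contains no $(-1)$-curve. Let $C \subset \Xt_{5,3}$ be a smooth rational curve with $C^2 = -1$. Either $C$ lies in a fiber of $f : \Xt_{5,3} \to \overline{B}_{5,3}$, or it surjects onto $\overline{B}_{5,3}$. In the first case, by the construction in \autoref{subsec:the_general_construction}, the Veech fibration is relatively minimal: each singular fiber is the associated stable nodal curve together with chains of $(-2)$-spheres from the resolution of nodes, so no fiber component has self-intersection $-1$. Thus $C$ must be a multi-section of degree $d \coloneqq C \cdot F \geq 1$, where $F$ is a general fiber.

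Applying adjunction to $C \cong \mathbb{P}^1$, combined with $K_{\Xt_{5,3}} = \omega_{\Xt_{5,3}/\overline{B}_{5,3}} + f^{*}K_{\overline{B}_{5,3}}$ and $\deg K_{\overline{B}_{5,3}} = -2$ (since $\overline{B}_{5,3} \cong \mathbb{P}^{1}$), I would deduce
\[
C \cdot \omega_{\Xt_{5,3}/\overline{B}_{5,3}} \;=\; 2d - 1.
\]
To contradict this, the next step is to bound $C \cdot \omega_{\Xt_{5,3}/\overline{B}_{5,3}}$ using the explicit computation $\omega_{\Xt/\overline{B}}^{2} = -6\kappa_{\mu}\chi(\overline{B}_{5,3})$ from the proof of \autoref{prop:signature_formula}, with $\kappa_{\mu} = 2/9$ for the stratum $\Omega\M_{2}(2)$. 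Combining nefness and bigness of $\omega_{\Xt_{5,3}/\overline{B}_{5,3}}$ (standard for a non-isotrivial family of genus-$2$ curves) with the Hodge index theorem on $\mathrm{NS}(\Xt_{5,3}) \otimes \mathbb{R}$ yields a lower bound on $C \cdot \omega_{\Xt_{5,3}/\overline{B}_{5,3}}$ in terms of $d$ and $C^{2} = -1$. Comparing with the explicit section from the zero of $\omega_{5}$, whose self-intersection is $3$ by \autoref{prop:zero_section}, then gives the additional intersection-theoretic input needed to rule out the equality $C \cdot \omega_{\Xt_{5,3}/\overline{B}_{5,3}} = 2d - 1$ for small $d$, while $F \cdot \omega_{\Xt/\overline{B}} = 2g - 2 = 2$ controls the large-$d$ regime.

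The main obstacle is closing this numerical gap uniformly in $d$, since the Hodge index bound is not automatically sharp for all multi-section degrees. A cleaner parallel approach is to invoke the theorem of Stipsicz (or Usher) that a relatively minimal Lefschetz fibration with fiber genus $\geq 2$ is minimal in the symplectic category --- and hence, for Kähler $4$-manifolds, the complex category --- independently of the base genus. Since $\Xt_{5,3} \to \overline{B}_{5,3}$ is a relatively minimal genus-$2$ Lefschetz fibration, this immediately yields the conclusion.
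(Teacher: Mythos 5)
Your fallback argument rests on a theorem that does not exist in the generality you need. Stipsicz's minimality theorem (the one the paper invokes in \autoref{thm:kodaira-dimension-genus-high}) requires the base to have genus at least $1$: its proof turns an exceptional sphere into a pseudoholomorphic multisection, which is impossible only because a sphere cannot dominate a positive-genus base. Usher's theorem gives minimality for Lefschetz fibrations that are \emph{nontrivial fiber sums}, which you have not shown here. Over base $S^2$ the unconditional statement is simply false, even for fiber genus $2$: Matsumoto's relatively minimal genus-$2$ Lefschetz fibration over $S^2$ has total space $T^2 \times S^2 \# 4\overline{\mathbb{CP}}{}^2$, which is not minimal. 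Since $\overline{B}_{5,3} \cong \P^1$, you are exactly in the excluded case --- indeed this is the entire reason the lemma requires a separate argument rather than the general machinery used for base genus $b \geq 1$.

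Your first route is set up correctly (relative minimality forces a $(-1)$-curve $C$ to be a multisection of some degree $d$, and adjunction gives $C\cdot\omega_{\Xt_{5,3}/\overline{B}_{5,3}} = 2d-1$), but it is not closed: nefness only gives $C\cdot\omega_{\Xt_{5,3}/\overline{B}_{5,3}} \geq 0$, and the Hodge-index/slope bound you would need to contradict $2d-1$ for \emph{every} $d$ is exactly what you admit you cannot supply uniformly; also note the zero section has self-intersection $-3$, not $+3$, which is how the paper uses \autoref{prop:zero_section}. The missing idea is not numerical but holomorphic: a rational multisection meets each smooth fiber in a finite set invariant under $\Aff^+(Y_5,\omega_5)[3]$, hence a union of periodic points. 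M\"oller's classification of periodic points leaves only two candidates --- the zero section (self-intersection $-3$, so not a $(-1)$-curve) and the degree-$5$ multisection through the remaining five Weierstrass points, which is then eliminated by a Riemann--Hurwitz count using the $20$ cusps of $\overline{B}_{5,3}$. Either supply the uniform intersection-theoretic bound you allude to, or argue via periodic points as above; as written, neither of your two routes completes the proof.
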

    \begin{proof}
    Suppose for contradiction that there is a non-singular rational curve $S \rightarrow \Xt_{5, 3}$ with self-intersection $-1$.
    Since $\Xt_{5, 3} \rightarrow \overline{B}_{5, 3}$ is relatively minimal, we know that $S$ is not contained in a single fiber.
    It follows that the restriction $p\v_S:S \rightarrow \overline{B}_{5, 3}$ is non-constant, and hence a branched cover.
    So $S$ is in fact a multisection of $\Xt_{5, 3} \rightarrow \overline{B}_{5, 3}$.
    
    Note that for any multisection of $\Xt_{5, 3} \rightarrow \overline{B}_{5, 3}$, the set of points of the multisection in any smooth fiber must be closed under the action of $\Aff^+(Y_5,\omega_5)[3]$.
    Using M\"oller's classification of periodic points \cite{moller_periodic_2006}, we can show that $\Aff^+(Y_5,\omega_5)$ has two periodic orbits on $(Y_5,\omega_5)$: the singleton set consisting of the zero of $\omega_5$, and the set of the remaining 5 Weierstrass points $p_1, \ldots, p_5$.
    One can check that $\Aff^+(Y_5,\omega_5)[3]$ acts transitively on the 5 non-zero Weierstrass points $p_i$, and hence $\Aff^+(Y_5, \omega_5)[3]$ has the same periodic orbits.

    Since $S$ is connected, there are two possibilities: either $S$ is the zero section, or $S$ is the multisection given by the points $p_i$.
    Since we can compute that the zero section has self-intersection $-3$ by \autoref{prop:zero_section}, it must be the latter.
    In particular, $p\v_S$ has degree 5, is non-ramified at any point in a smooth fiber.
    Note that since $p\v_S$ is ramified, this means it must be ramified at some point in some singular fiber.
    In fact, since each cusp of $B_{5, 3}$ is a preimage of the lone cusp on the original Teichm\"uller curve $C_5$, $p\v_S$ is ramified at the same number of points $k \ge 1$ in each singular fiber of $\Xt_{5, 3}$.
    
    We now get a contradiction as follows.
    Riemann-Hurwitz says that
    \begin{equation*}
        2 - r = 5(2 - b),
    \end{equation*}
    where $b$ is the number of branch points on $\overline{B}_{5, 3}$, and $r$ is the total number of points on $S$ at which $p\v_S$ is ramified.
    From above, we know that $p\v_S$ is ramified at a point in each singular fiber, and hence $b = 20$ since $B_{5, 3}$ has 20 cusps.
    Moreover, the ramification is $r = 20k$.
    But the equation
    \begin{equation*}
        2 - 20k = 5(2 - 20)
    \end{equation*}
    has no positive integer solutions.
    \end{proof}
    
    We conclude that $\Xt_{5,3}$ is a \emph{Horikawa surface}: a minimal general type complex surface on the Noether line $c_1^2 = 2p_g - 4$.
    Following \cite{Auroux}, up to deformation equivalence there are precisely two minimal general type surfaces with $e = 116$ and $\sigma = -72$.
    These two surfaces are homeomorphic by M. Freedman's classification, but it is an open problem to determine if they are diffeomorphic or symplectomorphic.
    Both surfaces admit a second fibering with genus 17 fibers; it would be of interest to realize this second fibering on $\Xt_{5,3}$ from the perspective of translation surfaces.
    Topologically, both surfaces are double branched covers of $S^2 \times S^2$; on $\Xt_{5,3}$, this should correspond to the fiberwise quotient by the hyperelliptic involution, yielding a $\P^1$-bundle over $\P^1$ corresponding to a map $\P^1 \rightarrow \M_{0,6}$.
\end{remark}

\subsubsection{Constructing the family}
Rather than working with the polygonal surfaces $(Y_n,\omega_n)$, we follow Hooper \cite{Hooper} (see also \cite{Leininger_2004} and \cite{McMullen_survey_2022}) to build a Thurston-Veech surface $(X_n,\omega_n)$ that lies on the same Teichm\"uller curve.
Let $g'$ be $\frac{n}{2}$ if $n$ is even and $\frac{n-1}{2}$ if $n$ is odd.
On a surface of genus $g'$, choose a pair of multicurves whose intersection pattern corresponds to consider the Coxeter diagram of type $A_{n-1}$ with its natural bipartite decomposition (as in \cite[Fig~6.1]{McMullen_survey_2022}).
Let $(X_n',\omega_n')$ denote the surface obtained from applying the Thurston-Veech construction to this pair of multicurves.
If $n$ is even, the surface $(X_n',\omega_n')$ has an order 2 translation automorphism $\iota$ induced by the $180^\circ$ rotation of $A_{n - 1}$ centered on the middle vertex.
We define $(X_n,\omega_n)$ to be the quotient $(X_n',\omega_n')$ by $\iota$ if $n$ is even, and we define it to be $(X_n',\omega_n')$ if $n$ is odd.
The surface $(X_n,\omega_n)$ is staircase-shaped; see \autoref{fig:BM}.
\begin{figure}[ht]
    \centering
    \includegraphics[scale=.29]{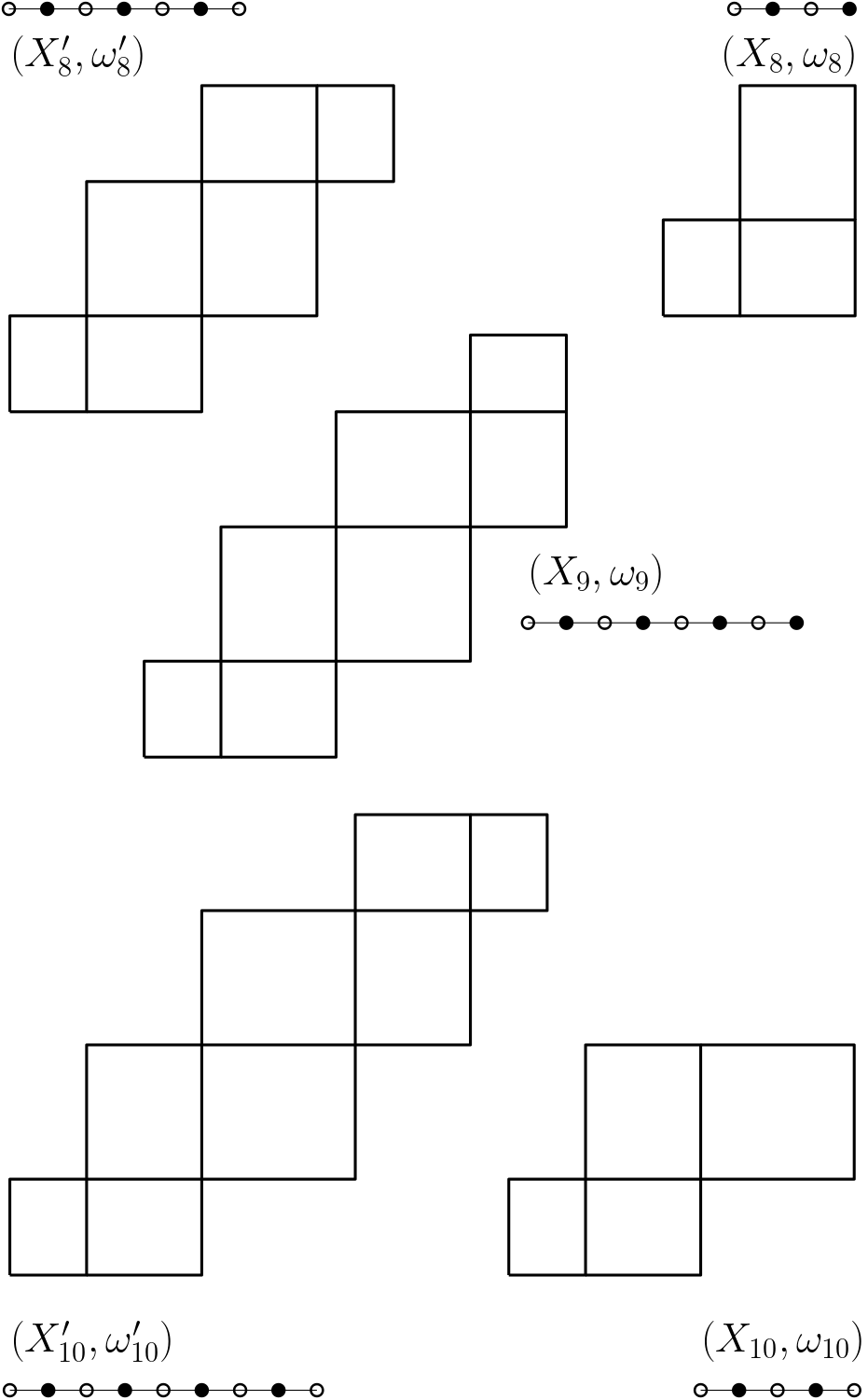}
    \caption{Examples of the surface $(X_n,\omega_n)$}
    \label{fig:BM}
\end{figure}
It has the following cylinder decompositions:
\begin{itemize}
    \item if $n$ is odd, it will have $\frac{n-1}{2}$ horizontal and vertical cylinders,
    \item  if $n = 4k$, it will have $\frac{n}{4}$ horizontal cylinders and vertical cylinders,
    \item  if $n = 4k+2$, it will have $\frac{n-2}{4}$ horizontal and $\frac{n-2}{4}+1$ vertical cylinders.
\end{itemize}
Recall that we are only interested in the case $n=q$, $n=2q$, or $n=2^k$ for $q > 3$ prime and $k > 2$.  In these cases, the stratum of $(X_n,\omega_n)$ is as follows \cite{Hooper}:
\begin{itemize}
    \item $(X_q,\omega_q)$ has genus $g=\frac{q-1}{2}$ and a single zero,
    \item $(X_{2q},\omega_{2q})$ has genus $g = \frac{q-1}{2}$ and 2 zeros of equal order,
    \item $(X_{2^k},\omega_{2^k})$ has genus $g = 2^{k-2}$ and a single zero.
\end{itemize}

By \cite{Hooper}, the surface $(X_n,\omega_n)$ lies on the same Teichm\"uller curve as $(Y_n,\omega_n)$.
The Perron-Frobenius eigenvalue is $\mu = 2\cos(\frac{\pi}{n})$.
The Veech group of $(X_n,\omega_n)$ is a triangle group; letting $\Delta(a,b,c) \subseteq \SL(2,\R)$ denote the orientation-preserving triangle group generated by elements of orders $a$, $b$, and $c$, we have that
\begin{equation*}
    \SL(X_n,\omega_n) \cong
    \begin{cases}
        \Delta(2,n,\infty) & \text{$n$ is odd} \\
        \Delta(\frac{n}{2}, \infty, \infty) & \text{$n$ is even}.
    \end{cases}
\end{equation*}
In particular, the Teichm\"uller curve $C_n$ corresponds to the hyperbolic orbifold of signature $(2,n,\infty)$ or $(\frac{n}{2},\infty,\infty)$.
In each case, we have that $-I \in \SL(X,\omega)$, and so each cusp is regular.
Each cusp corresponds to a single Dehn twist around each core curve in the corresponding cylinder decomposition.

\subsubsection{The congruence cover}
We start by showing that $(X_n, \omega_n)$ satisfies the hypotheses of \autoref{thm:action_on_homology_mod_p}, where $\alpha = \mu^2 = 4\cos(\frac{\pi}{n})^2$.

\begin{lemma}\label{lem:staircase-heights}
    Up to normalization, the horizontal cylinder heights of $(X_n, \omega_n)$ are odd integer polynomials in $\mu$, and the vertical cylinder heights of $(X_n, \omega_n)$ are even integer polynomials in $\mu$.
\end{lemma}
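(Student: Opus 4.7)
The plan is to analyze the Perron--Frobenius eigenvector equation $Q\mathbf{h} = \mu\mathbf{h}$ directly. Recall that $(X_n,\omega_n)$ (or its double cover $(X_n',\omega_n')$ when $n$ is even) is built from a bipartite multicurve pair whose intersection graph is the Coxeter diagram of type $A_{n-1}$, i.e., a path on $n-1$ vertices. Label these vertices $c_1,c_2,\ldots,c_{n-1}$ consecutively along the path, and choose the convention that vertices with odd index correspond to vertical core curves while even-indexed vertices correspond to horizontal core curves (if the convention implicit in \autoref{fig:BM} is the opposite, I renormalize $\mathbf{h}$ by $\mu$ at the end to swap the parities, which is legitimate by the ``up to normalization'' clause).

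Under this labeling, $Q$ is the tridiagonal $0$--$1$ matrix, and the eigenvector equation reduces to the three-term recurrence
\begin{equation*}
    h_{k+1} = \mu h_k - h_{k-1}, \qquad h_0 \coloneqq 0, \quad h_1 \coloneqq 1.
\end{equation*}
Its unique solution is $h_k = U_{k-1}(\mu/2)$, where $U_j$ denotes the $j$th Chebyshev polynomial of the second kind. A straightforward induction confirms that each $h_k$ is an integer polynomial in $\mu$ of degree $k-1$ whose nonzero monomials all have degree of parity $k-1 \pmod 2$. The self-consistency of the recurrence at the other endpoint (giving $h_{n-2} = \mu h_{n-1}$) forces $\mu = 2\cos(\pi/n)$, matching the Perron--Frobenius eigenvalue recorded in \autoref{subsec:b-m}.

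Consequently, the odd-indexed (vertical) heights $h_1,h_3,h_5,\ldots$ are even integer polynomials in $\mu$, while the even-indexed (horizontal) heights $h_2,h_4,\ldots$ are odd integer polynomials in $\mu$, matching the claim. For $n$ even, $(X_n,\omega_n)$ is the quotient of $(X_n',\omega_n')$ by the involution $\iota$ induced by the reflection of $A_{n-1}$ about its middle vertex; since $\iota$ sends $c_k \mapsto c_{n-k}$, it preserves the bipartition, so the cylinder heights on $(X_n,\omega_n)$ form a subset of $\{h_k\}$ and inherit the parity structure.

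The main obstacle is bookkeeping rather than mathematical: one must carefully verify the correspondence between the parity of the index in the $A_{n-1}$ labeling and the horizontal/vertical designation on the staircase $(X_n,\omega_n)$ shown in \autoref{fig:BM}, so that no hidden renormalization alters the claimed parities. Once this convention is pinned down, the parity alternation is forced by the Chebyshev recurrence with essentially no further calculation.
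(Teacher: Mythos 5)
Your proof is correct, but it is organized differently from the paper's. The paper never touches the Thurston--Veech matrix: it argues by induction along the staircase itself, normalizing the lowest horizontal height to be $\mu$ and propagating the two relations $h_{V'}=\mu h_H-h_V$ and $h_{H'}=\mu h_{V'}-h_H$ coming from the inverse-modulus-$\mu$ condition, so the parity alternation is forced without ever writing closed forms and without any mention of the double cover, since the induction runs directly on the quotient surface $(X_n,\omega_n)$. You instead solve the eigenvector equation $Q\mathbf{h}=\mu\mathbf{h}$ for the $A_{n-1}$ intersection matrix upstairs, getting the explicit heights $h_k=U_{k-1}(\mu/2)$, which is in substance the same three-term recurrence but packaged through the Thurston--Veech data; this buys you closed formulas (and recovers $\mu=2\cos(\pi/n)$ for free), at the cost of two pieces of bookkeeping the paper avoids: matching the index-parity bipartition of the path with the horizontal/vertical designation (your fallback rescaling of $\mathbf{h}$ by $\mu$ is legitimate under the ``up to normalization'' clause, and in fact the staircase convention already puts an endpoint curve of height $1$ in the vertical family, so the swap is not needed), and the descent through the quotient for $n$ even. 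That last step you state rather tersely: it does hold, because $\iota$ is a translation automorphism, so it preserves horizontal and vertical directions, it preserves the bipartition since $k$ and $n-k$ have the same parity when $n$ is even, swapped cylinders have equal heights by the symmetry $h_k=h_{n-k}$, and the degree-two translation quotient preserves the height of the $\iota$-invariant middle cylinder (only its circumference is halved); spelling out a sentence to that effect would close the only small gap in exposition.
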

\begin{proof}
    First, observe that it's enough to show that all horizontal heights are odd integer polynomials in $\mu$.
    Indeed, under this assumption we get that all vertical circumferences are odd integer polynomials in $\mu$, since each vertical circumference is a sum of horizontal heights.
    Since each cylinder has inverse modulus $\mu$, each vertical circumference $c_V$ and height $h_V$ satisfies $c_V / h_V = \mu$.
    If $c_V$ is an odd integer polynomial in $\mu$, we conclude that $h_V$ is an even integer polynomial in $\mu$.
    
    We show that all horizontal heights are odd integer polynomials in $\mu$ by induction.
    We address the base case by normalizing the height of the lowest horizontal cylinder to be $\mu$.
    For our inductive step, suppose that the first $n$ horizontal cylinder heights are odd integer polynomials in $\mu$.
    Let $H$ be the $n$th horizontal cylinder with height $h_H$, and let $H'$ denote the $(n+1)$st horizontal cylinder with height $h_{H'}$.
    See \autoref{fig:staircase}.
    \begin{figure}[ht]
        \centering
        \includegraphics[scale=0.5]{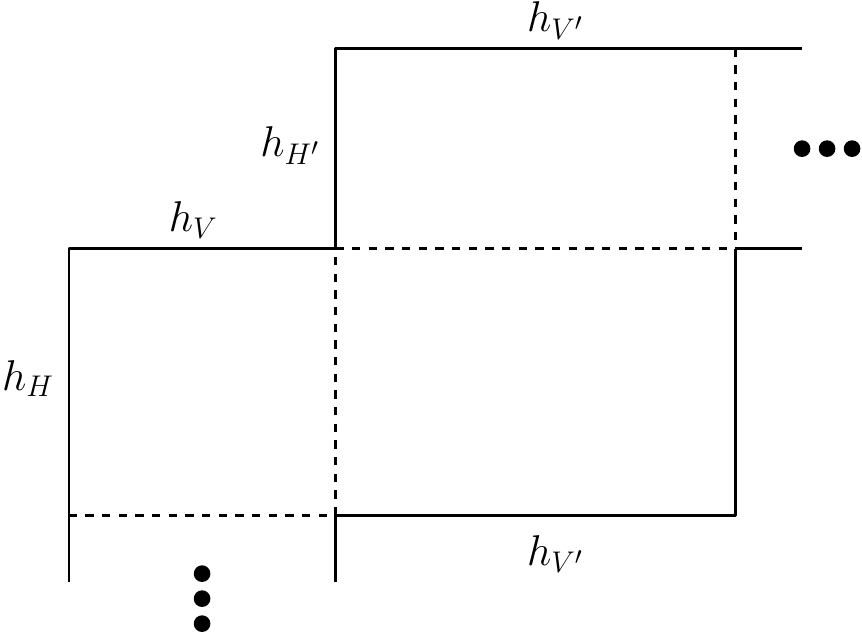}
        \caption{The inductive step of \autoref{lem:staircase-heights}}
        \label{fig:staircase}
    \end{figure}

    Our goal is show that $h_{H'}$ is also an odd integer polynomial in $\mu$.
    Let the two vertical cylinders crossing $H$ be $V$ and $V'$ with heights $h_V$ and $h_{V'}$ respectively.
    We claim that both $h_V$ and $h_{V'}$ are even integer polynomials in $\mu$.
    Since the circumference of $V$ is an odd integer polynomial in $\mu$ by induction, and $c_V / h_V = \mu$, we have that $h_V$ is an even integer polynomial in $\mu$. 
    We also have that
    \begin{align*}
        \frac{c_H}{h_H} = \frac{h_V + h_{V'}}{h_H} = \mu \implies h_{V'} = \mu h_H - h_V.
    \end{align*}
    Since $\mu h_H$ is an even integer polynomial in $\mu$, and so is $h_V$, we conclude that $h_{V'}$ is an even integer polynomial in $\mu$.
    This proves the claim.

    Now consider the vertical cylinder $V'$.
    We have
    \begin{align*}
        \frac{c_{V'}}{h_{V'}} = \frac{h_H + h_{H'}}{h_{V'}} = \mu \quad \implies h_{H'} = \mu h_{V'} - h_H.
    \end{align*}
    Since $h_H$ is an odd integer polynomial in $\mu$ by induction, and so is $\mu h_{V'}$ by the above claim, it follows that $h_{H'}$ is an odd integer polynomial in $\mu$.
    This completes the proof of the inductive step.
\end{proof}
\begin{lemma}\label{cor:staircases-satisfy-criterion}
    For the surface $(X_n, \omega_n)$, $\Im(\hol)$ is a rank $2$ free $\Z[\mu^2]$-module, with a basis consisting of a horizontal and a vertical vector.
\end{lemma}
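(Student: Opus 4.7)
The plan is to compute the holonomies of horizontal and vertical core curves explicitly and observe that they generate the desired free $\Z[\mu^2]$-module structure. First I would verify that the horizontal and vertical cylinder core curves of $(X_n,\omega_n)$ generate $H_1(X_n;\Z)$ as a $\Z$-module: this follows from \autoref{prop:pi_1_suff_cond}, since the staircase polygon representative has its horizontal and vertical core curves each crossing the polygon boundary only once. Consequently $\Im(\hol)$ is generated as a $\Z$-module by the vectors $(c^H_i,0)$ and $(0,c^V_j)$, where $c^H_i$ and $c^V_j$ are the horizontal and vertical cylinder circumferences.

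Next I would unwind \autoref{lem:staircase-heights}. Its proof establishes the Chebyshev-type recursion $h_{k+1} = \mu h_k - h_{k-1}$ with base cases $h_1 = \mu$ and $h_2 = \mu^2$, whose solution is
\[
    h_k = \mu\, U_{k-1}(\mu/2),
\]
where $U_{k-1}$ is the Chebyshev polynomial of the second kind. Since each cylinder has inverse modulus $\mu$, the circumference of a cylinder equals $\mu$ times its height. The horizontal heights occur at odd indices $k$, so the horizontal circumferences have the form $\mu^2 V_i(\mu^2)$, where $V_i \in \Z[y]$ is a monic polynomial of degree $i$ (derived from the even Chebyshev polynomial $U_{2i}$). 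Similarly the vertical circumferences have the form $\mu^3 W_j(\mu^2)$, with $W_j \in \Z[y]$ monic of degree $j$.

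The key observation is that $\{V_0(\mu^2),\ldots,V_{g-1}(\mu^2)\}$ is a $\Z$-basis of $\Z[\mu^2]$: the change-of-basis matrix to the standard basis $\{1,\mu^2,\ldots,\mu^{2(g-1)}\}$ is unit upper triangular with integer entries. Hence the $\Z$-span of the horizontal circumferences equals $\mu^2\, \Z[\mu^2]$, giving that the horizontal part of $\Im(\hol)$ is the free rank-$1$ $\Z[\mu^2]$-module $\mu^2 \cdot \Z[\mu^2]$. The identical argument handles the vertical part, which equals $\mu^3 \cdot \Z[\mu^2]$. Taking $\gamma$ and $\delta$ to be the shortest horizontal and vertical core curves, we get $\hol(\gamma) = (\mu^2, 0)$ and $\hol(\delta) = (0, \mu^3)$, which form the required $\Z[\mu^2]$-basis of $\Im(\hol)$.

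The hardest part will be handling the three families $n = q$, $n = 2q$, and $n = 2^k$ uniformly, since the counts of horizontal versus vertical cylinders differ. In the $n = 2q$ case one direction has $g + 1$ cylinders and the other has $g$, so the list of $V_i$'s (or $W_j$'s) in that direction contains one extra polynomial of degree $g$. This extra polynomial reduces modulo the minimal polynomial of $\mu^2$ to an element already in the $\Z$-span of the lower-degree ones, so the conclusion is unchanged. A minor bookkeeping issue is aligning \autoref{lem:staircase-heights}'s normalization (where $h_1$ is the lowest horizontal height) with the actual staircase structure in each case, but this is resolved by a possible relabeling of horizontal and vertical directions.
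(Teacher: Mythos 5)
Your strategy is the same as ours (run the modulus recursion on the staircase heights, exploit the parity of the resulting polynomials in $\mu$, and use \autoref{prop:pi_1_suff_cond} to see that the core curves span $H_1(X_n;\Z)$, so that $\Im(\hol)$ is the $\Z$-span of the circumference vectors), and for $n=q$ odd your more explicit version is correct and even slightly cleaner: the unit-triangular change of basis from $\{V_i(\mu^2)\}$ to $\{1,\mu^2,\dots,\mu^{2(g-1)}\}$ gives the spanning statement directly. The discrepancy between your basis $\{(\mu^2,0),(0,\mu^3)\}$ and our $\{(\mu^2,0),(0,\mu)\}$ is only a choice of which end of the chain one normalizes from, and is harmless since the lemma only asserts existence of a basis of this shape.

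The genuine gap is in the even families. For $n=2q$ and $n=2^k$ the surface $(X_n,\omega_n)$ is the \emph{quotient} of the $A_{n-1}$ Thurston--Veech surface by the order-two translation automorphism, and the cylinder coming from the middle vertex is exceptional: the involution maps that cylinder to itself, translating by half its circumference, so downstairs it has the same height but half the circumference, i.e.\ inverse modulus $\mu/2$ rather than $\mu$; its circumference equals the height of the adjacent cylinder, not $\mu$ times its own height. (One can see the asymmetry concretely in the octagon case: in the discriminant-$8$ L-table one direction has two cylinders of equal modulus while the other has moduli in ratio $2:1$.) Hence your uniform formulas ``horizontal circumferences $\mu^2V_i(\mu^2)$, vertical circumferences $\mu^3W_j(\mu^2)$'' fail for this one cylinder, and your proposed handling of the unequal cylinder counts (an extra monic polynomial of degree $g$, reduced modulo $m_{\mu^2}$) addresses a non-issue: no degree-$g$ polynomial occurs, but an exceptional circumference does. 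For $n=2^k$ this actually changes the answer: in your labeling the exceptional (vertical) circumference is $\mu U_{n/2-2}(\mu/2)=\mu\bigl(\pm1+\mu^2(\cdots)\bigr)$, which lies in $\mu\Z[\mu^2]$ but not in $\mu^3\Z[\mu^2]$ (here $\mu^2$ has norm $\pm2$, so it is not a unit); the vertical part of $\Im(\hol)$ is then $\mu\Z[\mu^2]$ and the shortest vertical core curve is not a generator. For $n=2q$ the exceptional circumference happens to lie in $\mu^2\Z[\mu^2]$ anyway, so your conclusion is unaffected there, but not for the reason you give. The lemma itself survives in all cases---the exceptional circumference is still $\mu$ or $\mu^2$ times an integer polynomial in $\mu^2$, so each part of $\Im(\hol)$ remains a free rank-one $\Z[\mu^2]$-module with a horizontal, respectively vertical, generator---but a complete argument must treat the quotient cylinder separately rather than through the uniform Chebyshev formula; our own write-up is admittedly terse on the same point, as it asserts $c/h=\mu$ for every cylinder.
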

\begin{proof}
    Applying \autoref{lem:staircase-heights}, up to normalization we have that 
    \begin{enumerate}[label=(\roman*)]
        \item all of the heights of the horizontal cylinders are odd integer polynomials in $\mu$,
        \item all of the heights of the vertical cylinders are even integer polynomials in $\mu$, and
        \item there is a horizontal cylinder with height $\mu$ and a vertical cylinder with height $1$.
    \end{enumerate}

    For each cylinder, the circumference $c$ and height $h$ satisfy $c/h = \mu$. 
    So, by (iii), there is a horizontal core curve $\gamma$ and a vertical core curve $\delta$ that have holonomy vectors $\begin{pmatrix} \mu^2 \\ 0 \end{pmatrix}$ and $\begin{pmatrix} 0 \\ \mu \end{pmatrix}$.
    The classes $\gamma$ and $\delta$ are independent over $\Z[\mu^2]$, since their holonomy vectors are independent over $\R$.
    By (i), every horizontal circumference is an element of $\mu^2\Z[\mu^2]$, and hence every horizontal core curve lies in the $\Z[\mu^2]$-span of $\gamma$.
    By (ii), every vertical circumference is an element of $\mu\Z[\mu^2]$, and hence every vertical core curve lies in the $\Z[\mu^2]$-span of $\delta$.
    Since $H_1(X;\Z)$ is spanned by the horizontal and vertical core curves, we conclude that $\{\hol(\gamma), \hol(\delta)\}$ is the desired basis of $\Im(\hol)$.
\end{proof}

Thus, by \autoref{thm:action_on_homology_mod_p}, we conclude that the degree of the congruence cover $B_{n,p} \rightarrow \widehat{C}_n$ is $d = \v \PSL(2,\F_{p^g}) \v$ for a prime $p$ as in the hypothesis of \autoref{thm:BM_computations}.

\subsubsection{Cusps and genus}
Our goal now is to compute the number of cusps, genus, and twisting of the level $p$ congruence cover $B_{n,p} \rightarrow \widehat{C}_n$ for a prime $p$ as in the statement of \autoref{thm:action_on_homology_mod_p}.

We start with the case $n=q$ for $q > 3$ prime.
Recall that $\widehat{C}_n$ is the $(2,n,\infty)$ hyperbolic orbifold.
Under the map $\PSL(X,\omega) \rightarrow \PSp(H_1(X_n;\F_p))$, the orbifold points of orders 2 and $n$ map to elements of order 2 and $n$ respectively, and the cusp maps to an element of order $p$.
Since the cusp maps to an element of order $p$, there are $\frac{d}{p}$ cusps on $B_{n,p}$.
By Riemann-Hurwitz, we can compute the genus $b$ of $B_{n,p}$ to be
\begin{equation*}
    b = 1 - \frac{d}{2}\left(\frac{1}{n} + \frac{1}{p} - \frac{1}{2}\right).
\end{equation*}
As mentioned above, the cusp generator on $C_n$ corresponds to a single twist around $g = \frac{q-1}{2}$ cylinders.
Thus, the monodromy around each cusp of $B_{n,p}$ is a $p$-fold twist around $g$ cylinders.
It follows that the twisting of $B_{n,p}$ is given by
\begin{equation*}
    T = \left(\frac{d}{p}\right)(p)(g) = dg.
\end{equation*}

Next, we consider the case that $n=2q$ for $q > 3$ prime.
Recall that $\widehat{C}_n$ is the $(\frac{n}{2},\infty,\infty)$ hyperbolic orbifold.
Under the map $\PSL(X,\omega) \rightarrow \PSp(H_1(X_n;\F_p))$, the orbifold points maps to an element of order $\frac{n}{2}$, and the cusps map to elements of order $p$.
Since the cusps map to elements of order $p$, there are $\frac{2d}{p}$ cusps on $B_{n,p}$.
By Riemann-Hurwitz, we can compute the genus $b$ of $B_{n,p}$ to be
\begin{equation*}
    b = 1 - d\left(\frac{1}{n} + \frac{1}{p} - \frac{1}{2}\right).
\end{equation*}
Since $n = 2q$, we have that $n \equiv 2 \pmod{4}$.
So, one cusp corresponds to a single twist around $g$ cylinders, while the other cusp corresponds to a single twist around $g+1$ cylinders.
The total twisting is then
\begin{equation*}
    T = \left(\frac{d}{p}\right)(p)(g+(g+1)) = d(2g+1).
\end{equation*}

Finally, we consider the case that $n=2^k$ for $k > 2$.
The number of cusps and genus of $B_{n, p}$ are the same as in the case of $n = 2q$, but the twisting term is different.
Namely, since $n \equiv 0 \pmod{4}$, each cusp corresponds to a single twist around $g$ cylinders.
It follows that the total twisting is
\begin{equation*}
    T = \left(\frac{d}{p}\right)(p)(g+g) = 2dg.
\end{equation*}

\subsubsection{Topological and geometric invariants}
We are now in a position to compute the topological invariants of $\Xt_{n,p} \rightarrow B_{n,p}$ for $n \in \{q,2q,2^k\}$ and $p$ as above.
As we have computed the genus of $B_{n, p}$, the cusps of $B_{n, p}$ and the twisting about these cusps, we can use \autoref{prop:euler_char} and \autoref{prop:signature_formula} to compute $e(\Xt_{n, p})$ and $\sigma(\Xt_{n, p})$.
These values are in \autoref{tab:BM_q_and_2q} and \autoref{tab:BM_2tok}.
We observe that $\pi_1(\Xt_{n,p}) \cong \pi_1(\overline{B}_{n,p})$ by \autoref{prop:pi_1_suff_cond}.

Now we can show that $\Xt_{n,p}$ is minimal and of general type.
By \autoref{thm:kodaira-dimension-genus-high}, it suffices to show that $B_{n,p}$ has positive genus.
In principle, one could deduce this from our formula for the genus of $B_{n,p}$, but we offer the following alternative justification.

\begin{lemma}
    Let $n$ and $p$ be as in \autoref{thm:BM_computations}.
    Then, the genus of $B_{n,p}$ is positive.
\end{lemma}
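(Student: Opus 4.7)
The plan is a direct inspection of the explicit genus formulas for $b = \textup{genus}(B_{n,p})$ recorded in \autoref{tab:BM_q_and_2q} and \autoref{tab:BM_2tok}. In all three families the formula has the shape
\begin{equation*}
b = 1 + c\,d\left(\tfrac{1}{2} - \tfrac{1}{n'} - \tfrac{1}{p}\right),
\end{equation*}
for some positive constant $c$ and denominator $n' \in \{q, 2q, 2^k\}$. Since $d \geq 1$, it suffices to show that the bracketed quantity is nonnegative for all admissible parameters.

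I would begin with the easiest subcases, $n = 2q$ and $n = 2^k$. The hypotheses force $q \geq 5$ and $k \geq 3$, and combined with $p \geq 3$ they give
\begin{equation*}
\tfrac{1}{2} - \tfrac{1}{2q} - \tfrac{1}{p} \geq \tfrac{1}{2} - \tfrac{1}{10} - \tfrac{1}{3} = \tfrac{1}{15}, \qquad \tfrac{1}{2} - \tfrac{1}{2^k} - \tfrac{1}{p} \geq \tfrac{1}{2} - \tfrac{1}{8} - \tfrac{1}{3} = \tfrac{1}{24},
\end{equation*}
both strictly positive, so $b \geq 1$ in these families.

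For $n = q$, I would split on $q$. If $q \geq 7$, then for any $p \geq 3$ one has $\frac{1}{2} - \frac{1}{q} - \frac{1}{p} \geq \frac{1}{2} - \frac{1}{7} - \frac{1}{3} = \frac{1}{42} > 0$. If $q = 5$, then $g = (q - 1)/2 = 2$, so the hypothesis $(p, g) \neq (3, 2)$ eliminates $p = 3$; any remaining admissible prime satisfies $p \geq 5$, which gives $\frac{1}{2} - \frac{1}{5} - \frac{1}{p} \geq \frac{1}{10} > 0$. In every admissible case $b \geq 1$.

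The only real subtlety is keeping track of the exclusion $(p, g) \neq (3, 2)$. The bracketed quantity becomes negative exactly at the single configuration $(q, p) = (5, 3)$, which is the double pentagon surface at level 3 where $B_{5,3}$ is indeed known to have genus zero (see \autoref{rem:double_pentagon}). Thus the hypothesis of \autoref{thm:BM_computations} is tight for the purposes of this lemma, and there is no additional geometric obstacle once the elementary inequality is verified.
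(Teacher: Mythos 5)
Your proof is correct, and it is not circular: the genus formulas in Tables \ref{tab:BM_q_and_2q} and \ref{tab:BM_2tok} are established (via Riemann--Hurwitz and the degree $d$ from \autoref{thm:action_on_homology_mod_p}) in the ``Cusps and genus'' step of the proof of \autoref{thm:BM_computations}, before the minimality claim where this lemma is needed, so you are free to quote them. Your route, however, is different from the paper's. The paper deliberately sidesteps the arithmetic (it even remarks that ``in principle, one could deduce this from our formula for the genus'') and instead argues geometrically: if $B_{n,p}$ had genus $0$, the deck group of the regular branched cover $B_{n,p} \to \widehat{C}_n$ would be a finite group acting on $S^2$, hence cyclic, dihedral, $A_4$, $S_4$, or $A_5$, forcing the branch data of $\widehat{C}_n$ to be one of the spherical signatures $(2,2,\ell)$, $(2,3,3)$, $(2,3,4)$, $(2,3,5)$; comparing with the actual branch orders $(2,q,p)$ or $(\tfrac n2,p,p)$ leaves only $n=5$, $p=3$, which is excluded by $(p,g)\neq(3,2)$. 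Your inequality check is shorter and more self-contained given the tables, and it has the merit of showing the hypothesis is sharp by locating the unique failing configuration $(q,p)=(5,3)$, where $B_{5,3}$ indeed has genus $0$ (\autoref{rem:double_pentagon}). The paper's argument buys independence from the exact value of $d$ and from the explicit genus formula --- it uses only the orbifold signature of $\widehat{C}_n$ and regularity of the cover --- so it transfers to settings where those formulas are not yet in hand, and it explains structurally why only the $(2,3,5)$ signature can obstruct positivity. Both proofs are valid; just make sure, as you did, to handle the $q=5$ subcase of the $n=q$ family using the exclusion $(p,g)\neq(3,2)$, since that is the one place the elementary bound genuinely needs the hypothesis.
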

\begin{proof}
    Let $G$ be the deck group of the cover of hyperbolic orbifolds $B_{n,p} \rightarrow \widehat{C}_n$.
    If $B_{n,p}$ has genus 0, then by the well-known classification of groups actions on $S^2$ (see, e.g., \cite{Thurston1997-yo}), there are only 5 possibilities for the cover $B_{n,p} \rightarrow \widehat{C}_n$:
    \begin{enumerate}[label=(\roman*)]
        \item $G$ is cyclic of order $\ell$, and $\widehat{C}_n$ has 2 branch points of orders $\ell$ and $\ell$,
        \item $G$ is dihedral of order $2\ell$, and $\widehat{C}_n$ has 3 branch ponts of orders 2, 2, and $\ell$,
        \item $G = A_4$, and $\widehat{C}_n$ has 3 branch points of orders 2, 3, and 3,
        \item $G = S_4$, and $\widehat{C}_n$ has 3 branch points of orders 2, 3, and 4,
        \item $G = A_5$, and $\widehat{C}_n$ has 3 branch points of orders 2, 3, and 5.
    \end{enumerate}
    However, as we observed above, $\widehat{C}_n$ has 3 branch points of orders 2, $n$, and $p$ if $n = q$, and orders $\frac{n}{2}$, $p$, and $p$ if $n = 2q$ or $n=2^k$.
    Thus, the only way that $B_{n,p}$ can have genus 0 is if $n=5$ and $p=3$.
    However, in this case the surface $X_n$ has genus $2$, and so this is the exceptional case that $(p,g) = (3,2)$.
\end{proof}

Finally, the fact that the BMY-inequality is strict follows from \autoref{prop:alg_prim_BMY}.
We can also observe this directly, since the BMY-inequality can be equivalently formulated as $\sigma(\Xt_{n,p}) \leq \frac{1}{3}e(\Xt_{n,p})$, and we can see in Tables \ref{tab:BM_q_and_2q} and \ref{tab:BM_2tok} that $\sigma(\Xt_{n,p}) < 0 < e(\Xt_{n,p})$.

\subsection{The sporadic examples}\label{subsec:sporadic}
Apart from the infinitely many Weierstrass curves in genus $2$ and the infinite family of regular polygons, there are two other known examples of algebraically primitive Teichm\"uller curves.
We call these two examples the $E_7$ curve and the $E_8$ curve.
The names come from the work of Leininger \cite{Leininger_2004}: he showed that spherical Coxeter diagrams, as inputs to the Thurston--Veech construction, generate Teichm\"uller curves.
The $E_7$ and $E_8$ curves then arise from the exceptional $E_7$ and $E_8$ Coxeter diagrams under this machinery.
(These Teichm\"uller curves also arise as unfoldings of $(2\pi/9, \pi/3, 4\pi/9)$ and $(\pi/5, \pi/3, 7\pi/15)$ triangles.)
See \autoref{fig:sporadics} for the Coexter diagrams and their realizations under the Thurston--Veech construction.
\begin{figure}[ht]
    \centering
    \includegraphics[scale=0.5]{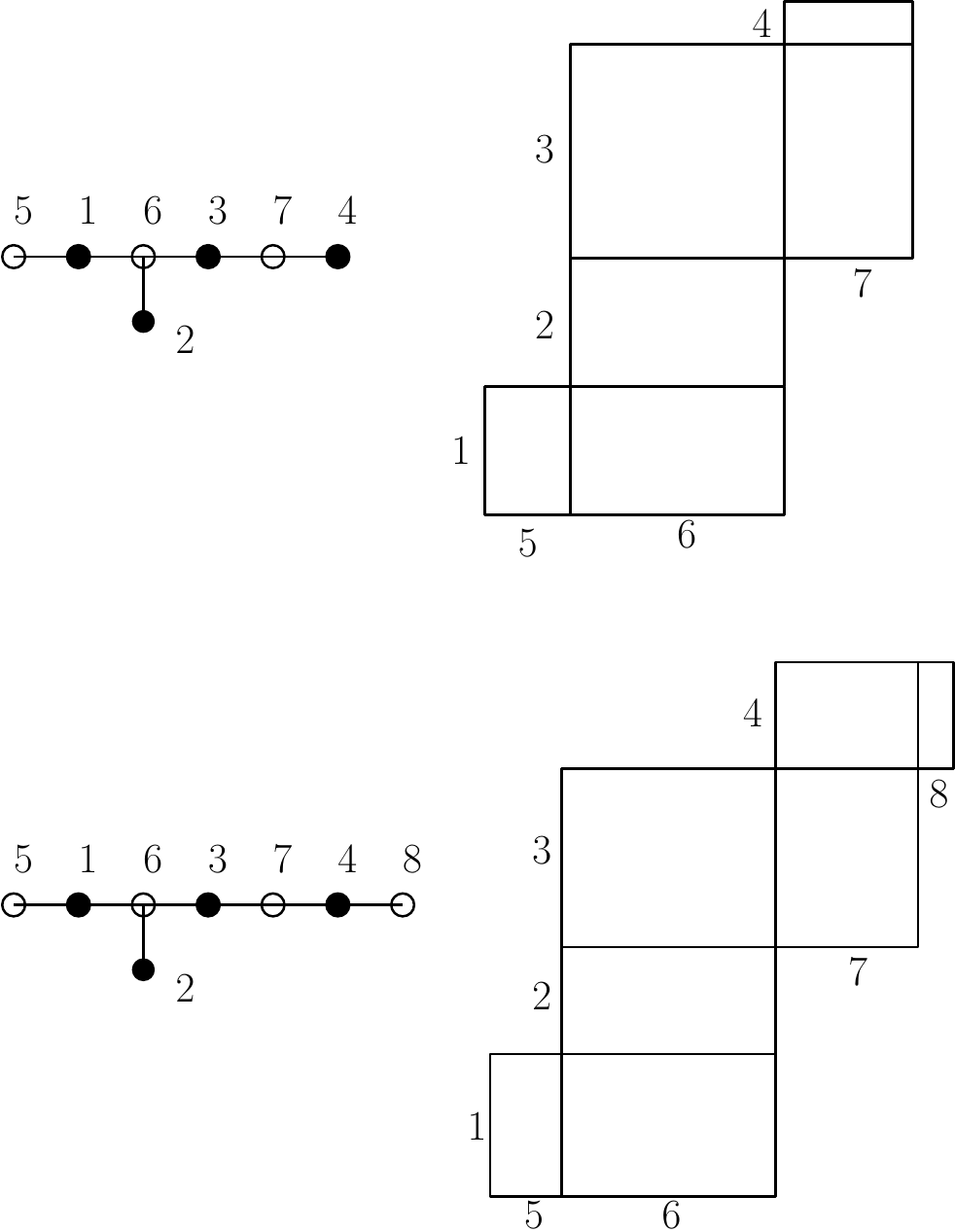}
    \caption{The $E_7$ and $E_8$ Coxeter diagrams determine translation surfaces $(X, \omega)$ that generate exceptional (algebraically primitive) Teichm\"uller curves.}
    \label{fig:sporadics}
\end{figure}
We refer to McMullen \cite[\S 6]{McMullen_survey_2022} for details on these sporadic examples.

We note that:
\begin{itemize}
    \item The $E_7$ curve has a genus 3 generating Veech surface $(X, \omega) \in \Omega \mathcal{M}_3(1, 3)$ with Veech group $\SL(X, \omega) = \Delta^-(9, \infty, \infty)$.
    The Perron--Frobenius eigenvalue $\mu_7 = 2 \cos(\pi / 18)$ has minimal polynomial $m_{\mu_7}(x) = x^6 - 6 x^4 + 9 x^2 - 3$.
    \item The $E_8$ curve has a genus 4 generating Veech surface $(X, \omega) \in \Omega \mathcal{M}_4(6)$ with Veech group $\Delta^-(15, \infty, \infty)$.
    The Perron--Frobenius eigenvalue $\mu_8 = 2 \cos(\pi /30)$ has minimal polynomial $m_{\mu_8}(x) = x^8 - 7 x^6 + 14 x^4 - 8 x^2 + 1$.
\end{itemize}
The minus superscript denotes that $\SL(X, \omega)$ does not contain $-I$; the group $\Delta^-(9, \infty, \infty)$ is an index 2 subgroup of the usual $\Delta(9, \infty, \infty)$.
This subtlety has a minor effect on the degree of the congruence cover.

\begin{theorem}\label{thm:invariants-sporadic}
    Let $(X,\omega)$ denote the $E_7$ or $E_8$ Veech surface.
    Let $\mu$ be the Perron-Frobenius eigenvalue, and let $p \geq 3$ be a prime such that the minimal polynomial of $\mu^2$ is irreducible over $\F_p$.
    Let $\Xt_p \rightarrow \overline{B}_p$ be the level $p$ congruence Veech fibration of $(X,\omega)$.  Then,
    \begin{enumerate}[label=(\roman*)]
        \item The genus of $B_p$, the number of cusps of $B_p$, the Euler characteristic of $\Xt_p$, and the signature of $\Xt_p$ are given by \autoref{tab:sporadics},
        \item $\pi_1(\Xt) \cong \pi_1(\overline{B}_p)$,
        \item $\Xt_p$ is a minimal general type surface with strict BMY inequality.
    \end{enumerate}
\end{theorem}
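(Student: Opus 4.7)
The plan is to follow the strategy of \autoref{thm:BM_computations} closely, as the $E_7$ and $E_8$ surfaces are Thurston--Veech surfaces whose Veech group is generated by a single horizontal and a single vertical parabolic. First I would apply \autoref{thm:main_technical} with $\alpha = \mu^2$, where $\mu$ is the Perron--Frobenius eigenvalue. The irreducibility hypothesis on $m_\alpha$ is assumed in the statement, so the only nontrivial input is to verify that $\Im(\hol)$ is a free $\Z[\mu^2]$-module of rank $2$ with a basis of the form $\{(x,0)^t,(0,y)^t\}$. I would establish analogs of \autoref{lem:staircase-heights} and \autoref{cor:staircases-satisfy-criterion}: solve the eigenvalue equation $Q\mathbf{h} = \mu \mathbf{h}$ for the $E_7$ and $E_8$ intersection matrices and show that, after an appropriate normalization, horizontal heights lie in one fixed coset of $\Z[\mu^2]$ inside $\Z[\mu]$ and vertical heights lie in the complementary coset. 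Since the horizontal and vertical core curves span $H_1(X;\Z)$, picking the shortest cylinder of each type then yields the required $\Z[\mu^2]$-basis of $\Im(\hol)$.

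Once \autoref{thm:main_technical} applies, we have $\rho_p(\Aff^+(X,\omega)) \cong \SL(2,\F_{p^g})$. Because $\SL(X,\omega) = \Delta^-(k,\infty,\infty)$ (with $k=9$ for $E_7$, $k=15$ for $E_8$) does not contain $-I$, the projection $\SL(X,\omega) \to \PSL(X,\omega)$ is an isomorphism, so by the diagram in \autoref{subsec:topology-of-base} the deck group of $B_p \to \widehat{C}$ is the full image $\rho_p(\SL(X,\omega))$, of order $d$. To determine the cover, I would verify that the elliptic generator of order $k$ has image of order $k$ and that each of the two cusp generators has image of order $p$, both by direct computation of the characteristic polynomial of $\rho_p$ applied to these elements together with the irreducibility of $m_{\mu^2}$ over $\F_p$. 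Riemann--Hurwitz then gives the genus and cusp count of $B_p$, and the twisting $T$ is computed by counting cylinders in the horizontal and vertical decompositions of $(X,\omega)$ read off from \autoref{fig:sporadics}.

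With these inputs, the Euler characteristic and signature of $\Xt_p$ are immediate from \autoref{prop:euler_char} and \autoref{prop:signature_formula} using the stratum data: $\mu = (1,3)$ for $E_7$ and $\mu = (6)$ for $E_8$, yielding explicit values of $\kappa_\mu$. Part (ii) follows from \autoref{prop:pi_1_suff_cond} once I check that the polygonal representatives in \autoref{fig:sporadics} have each horizontal and vertical core curve crossing the boundary exactly once; this is visible from the pictures. For part (iii), strict BMY is immediate from \autoref{prop:alg_prim_BMY}, and minimality plus general type follow from \autoref{thm:kodaira-dimension-genus-high} once $b \geq 1$ is confirmed. The latter can be read off from the computed genus formula, or alternatively ruled out a priori by classifying the possible $S^2 \to S^2$ covers (cyclic, dihedral, $A_4$, $S_4$, $A_5$) and observing that the orbifold signatures $(9,\infty,\infty)$ and $(15,\infty,\infty)$ admit none of them at the required degree $d$.

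The main obstacle is the first step: verifying the free $\Z[\mu^2]$-module structure of $\Im(\hol)$. Unlike the $A_{n-1}$ staircase, the $E_7$ and $E_8$ Coxeter diagrams branch, so the cylinder-height recursion does not unfold along a single chain and the clean parity argument of \autoref{lem:staircase-heights} does not apply verbatim. I expect to handle this by expanding the Perron--Frobenius eigenvector of $Q$ explicitly in the basis $\{1,\mu,\mu^2,\ldots,\mu^{2g-1}\}$ of $\Z[\mu]$ (using $m_\mu$) and tracking the parities of the exponents appearing in each horizontal versus vertical coordinate. Once the eigenvector is computed, the descent to $\Z[\mu^2]$ should be a finite check, and the remaining steps of the proof then proceed by direct substitution into the formulas of Sections \ref{sec:top_invariants} and \ref{sec:geometric_invariants}.
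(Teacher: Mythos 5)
Your proposal is correct and follows essentially the same route as the paper: verify the free $\Z[\mu^2]$-module hypothesis by an explicit parity analysis of the Perron--Frobenius heights (one of the two options the paper itself invokes, alongside adapting \autoref{lem:staircase-heights}), apply \autoref{thm:main_technical}, use $-I \notin \SL(X,\omega)$ to get deck group $\SL(2,\F_{p^g})$ of order $d$, count cusps and twisting, and then feed everything into \autoref{prop:euler_char}, \autoref{prop:signature_formula}, \autoref{prop:pi_1_suff_cond}, \autoref{thm:kodaira-dimension-genus-high}, and \autoref{prop:alg_prim_BMY}. The only differences are cosmetic (your check of the elliptic generator's image order is unnecessary, since orbifold Euler characteristic multiplicativity only needs the cusp orders), so there is nothing substantive to flag.
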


\begin{table}[th]
    \begin{tabular}{|| c | c c ||}
    \hline
    & $E_7$ & $E_8$ \\ \hline
    $\text{genus}(B_p)$  & $1 + \frac d2(\frac89 - \frac2p)$     & $1 + \frac d2(\frac{14}{15} - \frac2p)$      \\
    \hline
    $\text{cusps}(B_p)$  & $2d / p$     & $2d / p$     \\
    \hline
    $e(\Xt_p)$   & $d(\frac{95}{9} - \frac{8}{p})$     & $d(\frac{68}{5} - \frac{12}{p})$     \\
    \hline
    $\sigma(\Xt_p)$ & $-\frac{35}{9} \, d$     & $-\frac{64}{15} \, d$     \\ \hline
    \end{tabular}
    \caption{The invariants of $B_{p}$ and $\Xt_{p}$ for the sporadic algebraically primitive examples $E_7$ and $E_8$.
    We write $d = \v\SL(2,\F_{p^g})\v = p^g(p^{2g}-1)$ for the degree of the congruence cover $B_{p} \rightarrow C$.}
    \label{tab:sporadics}
    \end{table}

\begin{proof}
We start by determining the topology of the congruence cover $B_p$.
Let $g$ denote the genus of $X$.
Note that the $E_7$ and $E_8$ curves have generators that are also staircase-shaped.
Either following the proof of \autoref{lem:staircase-heights} or by explicitly calculating the heights $\mathbf{h}$ via the Thurston--Veech construction, we find that the horizontal heights are odd integer polynomials in $\mu$ and the vertical heights are even integer polynomials in $\mu$.
A variant of \autoref{cor:staircases-satisfy-criterion} then implies that, for primes $p$ for which $m_{\mu^2}(x)$ is irreducible, we have
\[
    \Im(\SL(X, \omega) \to \Sp(H_1(X; \F_p))) \cong \SL(2, \F_{p^g}).
\]
Since $-I \notin \SL(X, \omega)$ and $(X, \omega)$ has no translation automorphisms, the deck group of the congruence cover is isomorphic to $\SL(2, \F_{p^g})$ (see \autoref{subsec:topology-of-base}).
We write $d \coloneqq | \SL(2, \F_{p^g}) |$ for the degree of the congruence cover.

The hyperbolic orbifold $C = \widehat{C}$ has two cusps and one orbifold point of order 9 or 15; write $n$ for this orbifold order.
Each cusp maps to an element of order $p$ and thus has $d/p$ preimages on $B_p$.
Riemann--Hurwitz then lets us compute the genus $b$ of $B_p$ by
\begin{align*}
    2 - 2b - 2d/p &= d(2 - 2(0) - 2 - 1 + 1/n)\\
    \implies b &= 1 + \frac d 2(1 - 1/n - 2/p).
\end{align*}

We now compute the total twisting $T$.
Per the Thurston--Veech construction, the horizontal direction decomposes into $4$ cylinders and the vertical direction decomposes into $g$ cylinders.
By construction, the horizontal and vertical multi-twists do one twist in each cylinder.
We conclude that $T = (g + 4) d$.

We now use \autoref{prop:euler_char} and \autoref{prop:signature_formula} to compute the Euler characteristic and signature.
For the latter, we use $\kappa_{(1, 3)} = 7/16$ and $\kappa_{(6)} = 4/7$.

By \autoref{prop:pi_1_suff_cond} we have that $\pi_1(\Xt) \isom \pi_1(\overline{B}_p)$.
Our formula for the genus of $B_p$ in both cases shows that it is never zero, so that $\Xt_p$ is a minimal general type surface by \autoref{thm:kodaira-dimension-genus-high}.
As for the BMY inequality, we appeal to \autoref{prop:alg_prim_BMY}.
\end{proof}

\newpage
\printbibliography

@article{moller_periodic_2006,
	title = {Periodic points on {Veech} surfaces and the {Mordell}-{Weil} group over a {Teichmüller} curve},
	volume = {165},
	issn = {0020-9910, 1432-1297},
	doi = {10.1007/s00222-006-0510-3},
	abstract = {Periodic points are points on Veech surfaces, whose orbit under the group of afﬁne diffeomorphisms is ﬁnite. We characterize those points as being torsion points if the Veech surfaces is suitably mapped to its Jacobian or an appropriate factor thereof. For a primitive Veech surface in genus two we show that the only periodic points are the Weierstraß points and the singularities.},
	
	number = {3},
	urldate = {2022-11-16},
	journal = {Invent. math.},
	author = {Möller, Martin},
	month = sep,
	year = {2006},
	pages = {633--649},
}

@article{moller_variations_2005,
	title = {Variations of {Hodge} structures of a {Teichmüller} curve},
	volume = {19},
	issn = {0894-0347, 1088-6834},
	doi = {10.1090/S0894-0347-05-00512-6},
	number = {2},
	urldate = {2022-11-16},
	journal = {J. Amer. Math. Soc.},
	author = {Möller, Martin},
	month = dec,
	year = {2005},
	pages = {327--344},
	file = {Möller - 2005 - Variations of Hodge structures of a Teichmüller cu.pdf:C\:\\Users\\tluca\\Zotero\\storage\\S9THZXEM\\Möller - 2005 - Variations of Hodge structures of a Teichmüller cu.pdf:application/pdf},
}

@article{Smith,
	doi = {10.2140/gt.1999.3.211},
  

  
	year = 1999,
	month = {7},
  
	publisher = {Mathematical Sciences Publishers},
  
	volume = {3},
  
	number = {1},
  
	pages = {211--233},
  
	author = {Ivan Smith},
  
	title = {Lefschetz fibrations and the Hodge bundle},
  
	journal = {Geometry {\&} Topology}
}

@misc{ChenMoller,
  doi = {10.48550/ARXIV.1104.3932},
  
 
  
  author = {Chen, Dawei and Moeller, Martin},
  
  keywords = {Dynamical Systems (math.DS), Algebraic Geometry (math.AG), Geometric Topology (math.GT), FOS: Mathematics, FOS: Mathematics},
  
  title = {Non-varying sums of Lyapunov exponents of Abelian differentials in low genus},
  
  publisher = {arXiv},
  
  year = {2011},
  
  copyright = {arXiv.org perpetual, non-exclusive license}
}

@incollection{Moller_PCMI,
    author = {M\"oller, Martin},
    title = {Teichm\"uller curves, mainly from the viewpoint of algebraic geometry},
    booktitle = {Moduli spaces of Riemann surfaces},
    month = {09},
    year = {2013},
    volume = {20},
    pages = {267–318},
    editor  = {Farb, Benson and Hain, Richard and Looijenga, Eduard},
  publisher = {American Mathematical Society},
  series = {IAS/Park City mathematics series},
  address = {Providence, RI},
}

@article{McMullen_survey_2022,
  doi = {10.1090/bull/1782},
 
  year = {2022},
  month = nov,
  publisher = {American Mathematical Society ({AMS})},
  volume = {60},
  number = {2},
  pages = {195--250},
  author = {Curtis McMullen},
  title = {Billiards and Teichm\"{u}ller curves},
  journal = {Bulletin of the American Mathematical Society}
}

@article{DiscSpin,
  doi = {10.1007/s00208-005-0666-y},
 
  year = {2005},
  month = jun,
  publisher = {Springer Science and Business Media {LLC}},
  volume = {333},
  number = {1},
  pages = {87--130},
  author = {Curtis T. McMullen},
  title = {Teichm\"{u}ller curves in genus two: Discriminant and spin},
  journal = {Mathematische Annalen}
}

@article{Hooper,
  doi = {10.1093/imrn/rns124},
  year = {2012},
  month = may,
  publisher = {Oxford University Press ({OUP})},
  volume = {2013},
  number = {12},
  pages = {2657--2698},
  author = {W. Patrick Hooper},
  title = {Grid Graphs and Lattice Surfaces},
  journal = {International Mathematics Research Notices}
}

@article{WrightTriangles,
  doi = {10.1007/s00039-013-0221-z},
 
  year = {2013},
  month = mar,
  publisher = {Springer Science and Business Media {LLC}},
  volume = {23},
  number = {2},
  pages = {776--809},
  author = {Alex Wright},
  title = {Schwarz triangle mappings and Teichm\"{u}ller curves:\\ the Veech{\textendash}Ward{\textendash}Bouw{\textendash}M\"{o}ller curves},
  journal = {Geometric and Functional Analysis}
}

@book{HarrisMorrison,
  doi = {10.1007/b98867},
  year = {1998},
  publisher = {Springer-Verlag},
  title = {Moduli of Curves},
  author = {Joe Harris and Ian Morrison}
}

@book{Brown,
  doi = {10.1007/978-1-4684-9327-6},
  year = {1982},
  publisher = {Springer New York},
  author = {Kenneth S. Brown},
  title = {Cohomology of Groups}
}

@article{forni_matheus_survey,
  doi = {10.3934/jmd.2014.8.271},
  year = {2015},
  month = apr,
  publisher = {American Institute of Mathematical Sciences ({AIMS})},
  volume = {8},
  number = {3/4},
  pages = {271--436},
  author = {Giovanni Forni and Carlos Matheus},
  title = {Introduction to Teichm\"{u}ller theory and its applications to dynamics of interval exchange transformations,  flows on surfaces and billiards},
  journal = {Journal of Modern Dynamics}
}

@book{gompf_stipsicz,
  title={4-Manifolds and Kirby Calculus},
  author={Gompf, R.E. and Stipsicz, A.},
  isbn={9780821809945},
  lccn={99029942},
  series={Graduate studies in mathematics},
  year={1999},
  publisher={American Mathematical Society}
}

@article{Herrlich_Schmithusem_EW,
  doi = {10.1002/mana.200510597},
  year = {2008},
  month = feb,
  publisher = {Wiley},
  volume = {281},
  number = {2},
  pages = {219--237},
  author = {Frank Herrlich and Gabriela Schmith\"{u}sen},
  title = {An extraordinary origami curve},
  journal = {Mathematische Nachrichten}
}

@article{Cautis,
  doi = {10.1007/s00208-008-0324-2},
  year = {2009},
  month = jan,
  publisher = {Springer Science and Business Media {LLC}},
  volume = {344},
  number = {3},
  pages = {717--747},
  author = {Sabin Cautis},
  title = {The abelian monodromy extension property for families of curves},
  journal = {Mathematische Annalen}
}

@book{farb_margalit,
  title     = "A primer on mapping class groups",
  author    = "Farb, Benson and Margalit, Dan",
  publisher = "Princeton University Press",
  series    = "Princeton Mathematical Series",
  month     =  oct,
  year      =  2011,
  address   = "Princeton, NJ"
}

@article{McMullen_Billiards_Tcurves_Hilbert_modular_surfaces,
 ISSN = {08940347, 10886834},
 author = {Curtis T. McMullen},
 journal = {Journal of the American Mathematical Society},
 number = {4},
 pages = {857--885},
 publisher = {American Mathematical Society},
 title = {Billiards and Teichmüller Curves on Hilbert Modular Surfaces},
 urldate = {2023-07-24},
 volume = {16},
 year = {2003}
}

@book{Barth,
address={Berlin, Heidelberg}, series={Ergebnisse der Mathematik und ihrer Grenzgebiete. 3. Folge / A Series of Modern Surveys in Mathematics}, title={Compact Complex Surfaces}, volume={4}, ISBN={978-3-540-00832-3}, DOI={10.1007/978-3-642-57739-0}, publisher={Springer Berlin Heidelberg}, author={Barth, Wolf P. and Hulek, Klaus and Peters, Chris A. M. and Van de Ven, Antonius}, year={2004}, collection={Ergebnisse der Mathematik und ihrer Grenzgebiete. 3. Folge / A Series of Modern Surveys in Mathematics}
}

@book{Gorenstein, address={Providence, RI}, edition={2. ed., repr}, title={Finite groups}, ISBN={978-0-8218-4342-0}, publisher={AMS Chelsea Publ}, author={Gorenstein, Daniel}, year={2007} }

@article{Bouw-Moller,
  doi = {10.4007/annals.2010.172.139},
  year = {2010},
  month = jun,
  publisher = {Annals of Mathematics},
  volume = {172},
  number = {1},
  pages = {139--185},
  author = {Irene Bouw and Martin M\"{o}ller},
  title = {Teichm\"{u}ller curves,  triangle groups,  and Lyapunov exponents},
  journal = {Annals of Mathematics}
}

@article{Schmid,
author = {Schmid, Wilfried},
journal = {Inventiones mathematicae},
pages = {211-320},
title = {Variation of Hodge Structure: The Singularities of the Period Mapping.},
volume = {22},
year = {1973},
}

@article{Matthews_Vaserstein_Weisfeiler,
  doi = {10.1112/plms/s3-48.3.514},
  year = {1984},
  month = may,
  publisher = {Wiley},
  volume = {s3-48},
  number = {3},
  pages = {514--532},
  author = {C. R. Matthews and L. N. Vaserstein and B. Weisfeiler},
  title = {Congruence Properties of Zariski-Dense Subgroups I},
  journal = {Proceedings of the London Mathematical Society}
}

@article{Moller_finiteness_results,
  doi = {10.5802/aif.2344},
  year = {2008},
  publisher = {Cellule {MathDoc}/{CEDRAM}},
  volume = {58},
  number = {1},
  pages = {63--83},
  author = {Martin M\"{o}ller},
  title = {Finiteness results for Teichm\"{u}ller curves},
  journal = {Annales de l'institut Fourier}
}

@article{Leininger_2004, title={On groups generated by two positive multi-twists: Teichmueller curves and Lehmer’s number}, volume={8}, ISSN={1364-0380, 1465-3060}, DOI={10.2140/gt.2004.8.1301}, number={3}, journal={Geometry \& Topology}, author={Leininger, Christopher J}, year={2004}, month=oct, pages={1301–1359}, }

@article{Sebbar_Besrour_2021, title={Modular groups and planar maps}, volume={51}, ISSN={0035-7596}, DOI={10.1216/rmj.2021.51.1847}, number={5}, journal={Rocky Mountain Journal of Mathematics}, author={Sebbar, Abdellah and Besrour, Khalil}, year={2021}, month=oct,  }

@article{Shioda,
  doi = {10.2969/jmsj/02410020},
  year = {1972},
  month = jan,
  publisher = {Mathematical Society of Japan (Project Euclid)},
  volume = {24},
  number = {1},
  author = {Tetsuji Shioda},
  title = {On elliptic modular surfaces},
  journal = {Journal of the Mathematical Society of Japan}
}

@article{Mukamel_2014,
    title={Orbifold points on Teichmüller curves and Jacobians with complex multiplication}, volume={18}, DOI={10.2140/gt.2014.18.779}, number={2}, journal={Geometry \& Topology}, publisher={MSP}, author={Mukamel, Ronen E.}, year={2014}, pages={779–829}
}

@article{Veech,
  doi = {10.1007/bf01388890},
  year = {1989},
  month = oct,
  publisher = {Springer Science and Business Media {LLC}},
  volume = {97},
  number = {3},
  pages = {553--583},
  author = {W. A. Veech},
  title = {Teichm\"uller curves in moduli space,  Eisenstein series and an application to triangular billiards},
  journal = {Inventiones Mathematicae}
}

@incollection{Kodaira,
  doi = {10.1515/9781400869879-007},
  year = {1962},
  month = dec,
  publisher = {Princeton University Press},
  pages = {1269--1372},
  author = {K. Kodaira},
  title = {On Compact Analytic Surfaces,  {II}-{III}},
  booktitle = {Kunihiko Kodaira: Collected Works,  Volume {III}}
}

@article{Farb_Looijenga_2022,
title={The Nielsen realization problem for K3 surfaces}, note={To appear in Journal of Differential Geometry (JDG)}, number={arXiv:2104.08187}, publisher={arXiv}, author={Farb, Benson and Looijenga, Eduard}, year={2022}, month=apr,  }

@article{Seraphina_Lee_2023, title={The Nielsen realization problem for high degree del Pezzo surfaces},  note={arXiv:2112.13500 [math]}, number={arXiv:2112.13500}, publisher={arXiv}, author={Lee, Seraphina Eun Bi}, year={2023}, month= jun }

@article{Auroux,
author = {Denis Auroux},
title = {{The canonical pencils on Horikawa surfaces}},
volume = {10},
journal = {Geometry \& Topology},
number = {4},
publisher = {MSP},
pages = {2173 -- 2217},
keywords = {Horikawa surfaces, Lefschetz pencils, Monodromy},
year = {2006},
doi = {10.2140/gt.2006.10.2173}
}

@article{Stipsicz,
 ISSN = {00029939, 10886826},
 author = {András I. Stipsicz},
 journal = {Proceedings of the American Mathematical Society},
 number = {6},
 pages = {1845--1851},
 publisher = {American Mathematical Society},
 title = {Chern Numbers of Certain Lefschetz Fibrations},
 volume = {128},
 year = {2000}
}

@article{Bainbridge2007,
  doi = {10.2140/gt.2007.11.1887},
  
  year = {2007},
  month = oct,
  publisher = {Mathematical Sciences Publishers},
  volume = {11},
  number = {4},
  pages = {1887--2073},
  author = {Matt Bainbridge},
  title = {Euler characteristics of Teichm\"{u}ller curves in genus two},
  journal = {Geometry {\&} Topology}
}

@BOOK{Xiao2008-mg,
  title     = "Surfaces fibr{\'e}es en courbes de genre deux",
  author    = "Xiao, Gang",
  publisher = "Springer",
  series    = "Lecture notes in mathematics",
  edition   =  1985,
  month     =  dec,
  year      =  2008,
  address   = "Berlin, Germany",
  language  = "fr"
}

@misc{bonnafoux2022arithmeticity,
      title={Arithmeticity of the Kontsevich--Zorich monodromies of certain families of square-tiled surfaces}, 
      author={Etienne Bonnafoux and Manuel Kany and Pascal Kattler and Carlos Matheus and Rogelio Niño and Manuel Sedano-Mendoza and Ferrán Valdez and Gabriela Weitze-Schmithüsen},
      year={2022},
      eprint={2206.06595},
      archivePrefix={arXiv},
      primaryClass={math.DS}
}

@phdthesis{ronkin,
    author = {Igor Ronkin},
    title = {Teichmüller curves in the Deligne-Mumford compactification},
    school = {Rheinische Friedrich-Wilhelms-Universität Bonn},
    year = 2008
}

@MISC {matheus--math-overflow,
    TITLE = {Periods of translation surfaces},
    AUTHOR = {Matheus} ,
    HOWPUBLISHED = {MathOverflow},
    URL = {https://mathoverflow.net/q/163941}
}

@BOOK{Thurston1997-yo,
  title     = "Three-dimensional geometry and topology, volume 1",
  author    = "Thurston, William P",
  editor    = "Levy, Silvio",
  publisher = "Princeton University Press",
  series    = "Princeton Mathematical Series",
  month     =  jan,
  year      =  1997,
  address   = "Princeton, NJ",
 
}

@article{Finster2016,
  doi = {10.1007/s11856-016-1366-x},
  year = {2016},
  month = jul,
  publisher = {Springer Science and Business Media {LLC}},
  volume = {214},
  number = {2},
  pages = {885--930},
  author = {Myriam Finster},
  title = {Congruence Veech groups},
  journal = {Israel Journal of Mathematics}
}

@BOOK{AthreyaMasur,
  title     = "Translation surfaces",
  author    = "Javadev S. Athreya and Howard Masur",
  publisher = "American Mathematical Society",
  month     =  jul,
  year      =  2024,
  address   = "Providence, RI",
  language  = "en"
}
\end{document}